\newtheorem{theorem}{Theorem}
\newtheorem{proposition}[theorem]{Proposition}
\newtheorem{corollary}[theorem]{Corollary}
\newtheorem{lemma}[theorem]{Lemma}
\newtheorem{convention}[theorem]{Convention}
\newtheorem{question}[theorem]{Question}
\newtheorem{problem}[theorem]{Problem}
\theoremstyle{definition}
\newtheorem{definition}[theorem]{Definition}
\newtheorem{remark}[theorem]{Remark}
\newtheorem{notation}[theorem]{Notation}
\newtheorem{example}[theorem]{Example}
\newtheorem*{remark*}{Remark}
\newcommand{\C}{\mathbb{C}}
\newcommand{\R}{\mathbb{R}}
\newcommand{\Z}{\mathbb{Z}}
\newcommand{\N}{\mathbb{N}}
\renewcommand{\H}{\mathbb{H}}
\newcommand{\PSL}{\mathrm{PSL}}
\newcommand{\Hom}{\mathrm{Hom}}
\newcommand{\id}{\mathrm{id}}
\newcommand{\F}{\mathcal{F}}
\newcommand{\UTtM}{\mathrm{UT}\tilde{M}}
\newcommand{\Fqg}{\mathcal{F}^{QG}}
\newcommand{\UT}{\mathrm{UT}}
\DeclareMathOperator{\Isom}{Isom}
\DeclareMathOperator{\Homeo}{Homeo}
\DeclareMathOperator{\Diff}{Diff}
\renewcommand{\tilde}{\widetilde}
\renewcommand{\paragraph}[1]{\medskip \noindent \textbf{#1. }}
\newcounter{notes}
\title{$C^0$ stability of boundary actions and inequivalent Anosov flows}
\author{Jonathan Bowden}
\address{Mathematische Fakult\"at 
Universit\"at Regensburg
93053 Regensburg, Germany}
\email{jonathan.bowden@math.uni-regensburg.de}
\author{Kathryn Mann}
\address{Department of Mathematics, Cornell University,  Ithaca, NY 14853, USA}
\email{k.mann@cornell.edu}
\begin{document}

\maketitle
\numberwithin{theorem}{section} 
\begin{abstract}
We give a topological stability result for the action of the fundamental group of a compact manifold of negative curvature on its boundary at infinity: any nearby action of this group by homeomorphisms of the sphere is semi-conjugate to the standard boundary action.   Using similar techniques we prove a global rigidity result for the ``slithering actions" of 3-manifold groups that come from skew-Anosov flows.  As applications, we construct hyperbolic 3-manifolds that admit arbitrarily many topologically inequivalent Anosov flows, answering a question from Kirby's problem list, and also give a more conceptual proof of a theorem of the second author on {\em global} $C^0$--rigidity of geometric surface group actions on the circle.  
\end{abstract}

\section{Introduction}

This paper proves two related rigidity results for group actions on manifolds, with applications to skew-Anosov flows. The first is a general local rigidity result for the {\em boundary action} of the fundamental group of a closed negatively curved manifold.

\paragraph{1. Local rigidity of boundary actions} 
A major historical motivation for the study of rigidity of group actions comes from the classical (Selberg--Calabi--Weil) rigidity of lattices in Lie groups.  Perhaps the best known example is Calabi's original theorem that, for $n \geq 3$, the fundamental group of a compact, hyperbolic $n$-manifold is locally rigid as a lattice in $\mathrm{SO}(n,1)$, later extended to a global rigidity result by Mostow.   From a geometric-topological viewpoint, it is natural to consider the action of $\mathrm{SO}(n,1)$ on the boundary sphere of the compactification of hyperbolic $n$-space (the universal cover of the manifold in question) and several modern proofs of Mostow rigidity pass through the study of this boundary action.  See \cite{Fisher} for a broad introduction to the subject.  

More generally, if $M$ is a closed $n$-dimensional manifold of (variable) negative curvature, its universal cover $\tilde{M}$ still admits a natural compactification by a visual {\em boundary sphere}, denoted $\partial_\infty \tilde{M}$ and the action of $\pi_1M$ on $\tilde{M}$ by deck transformations extends to an action by homeomorphisms on $\partial_\infty \tilde{M}$, which we call the {\em boundary action}.   However, even if $M$ is smooth, $\partial_\infty \tilde{M}$ typically has no more than a $C^0$ structure.  This presents a new challenge for dynamicists, as many tools in rigidity theory originate either from hyperbolic smooth dynamics or the homogeneous (Lie group) setting, where differentiability plays an essential role. 
 
As we will later show, in the $C^0$ context the best rigidity result one can hope for is {\em topological stability}. 
An action $\rho': \Gamma \to \Homeo(X)$ of a group $\Gamma$ on a space $X$ is said to be a {\em topological factor} of an action $\rho: \Gamma \to \Homeo(Y)$ if there is a surjective, continuous map $h: X \to Y$ (called a semiconjguacy) such that $h \circ \rho' = \rho \circ h$.  A group action $\rho: \Gamma \to \Homeo(X)$ is {\em topologically stable} if any action which is close to $\rho$ in $\Hom(\Gamma, \Homeo(Y))$ is a factor of $\rho$.\footnote{Elsewhere in the literature this is also referred to as {\em semi-stability} or {\em (topological) structural stability}, see \cite{Nit} for some discussion on terminology.}   Here and in what follows, $\Hom(\Gamma, \Homeo(Y))$ is always equipped with the standard compact-open topology.   Our first result is the following.

\begin{theorem}[Topological stability of boundary actions] \label{thm:local_rig}
Let $M$ be a compact, orientable $n$-manifold with negative curvature, and $\rho_0: \pi_1M \to \Homeo(S^{n-1})$ the natural boundary action on $\partial_\infty \tilde{M}$. 
There exists a neighborhood of $\rho_0$ in $\Hom(\pi_1M, \Homeo(S^{n-1}))$ consisting of representations which are topological factors of $\rho_0$.   

Moreover, this topological stability is {\em strong} in the following sense: for any neighborhood $U$ of the identity in the space of continuous self-maps of $S^{n-1}$, there exists a neighborhood $V$ of $\rho_0$ in $\Hom(\pi_1M, \Homeo(S^{n-1}))$ so that every element of $V$ is semi-conjugate to $\rho_0$ by some map in $U$.  
\end{theorem}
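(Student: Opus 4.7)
The idea is to construct the semi-conjugacy $h: S^{n-1} \to S^{n-1}$ — with source carrying $\rho_0$ and target carrying the perturbed action $\rho'$ — by a shadowing-style argument that exploits the hyperbolic dynamics of individual group elements on the boundary.

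Each infinite-order $\gamma \in \pi_1 M$ acts on $\partial_\infty \tilde M$ with a north--south pattern: a unique attracting fixed point $\gamma^+$, a unique repelling fixed point $\gamma^-$, and uniform exponential contraction on compact subsets avoiding $\gamma^-$, measured in a visual metric. Using compactness of $S^{n-1}$ and density of attracting points, I would select a finite family $\{\gamma_1, \ldots, \gamma_N\} \subset \pi_1 M$ and an associated cover $\{U_i\}$ of $S^{n-1}$ by small open balls with the following Markov-style property: $\rho_0(\gamma_i^{-1})$ maps $U_i$ onto all of $S^{n-1}$ except a small ``bad'' neighborhood $R_i$ of $\gamma_i^-$. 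Then every point $x \in S^{n-1}$ admits an itinerary $(i_1, i_2, \ldots)$ with the property that $\rho_0(\gamma_{i_k}^{-1}) \circ \cdots \circ \rho_0(\gamma_{i_1}^{-1})(x)$ lies in some $U_{i_{k+1}}$, and the nested preimage regions shrink by uniform contraction to isolate $x$.

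For $\rho'$ sufficiently $C^0$-close to $\rho_0$, the generators $\rho'(\gamma_i)$ satisfy the same combinatorial inclusions (an open condition depending on finitely many group elements) with slightly deformed $U_i, R_i$. I would then define $h(x)$ by running the corresponding construction with $\rho'$ in place of $\rho_0$: starting from an itinerary of $x$ under $\rho_0$, the nested regions in the target sphere obtained via $\rho'(\gamma_{i_k})$ shrink to a single point, taken to be $h(x)$. The map $h$ is equivariant because shifting an itinerary by $\gamma$ corresponds on both sides to applying $\rho_0(\gamma)$ respectively $\rho'(\gamma)$; it is continuous because uniform contraction forces nearby itineraries to give nearby images; and it is surjective because any point in the target has a $\rho'$-itinerary, which the same procedure run in reverse lifts to a point of the source. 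For the strong stability statement, the contraction rates of $\rho'(\gamma_i)$ approach those of $\rho_0(\gamma_i)$ as $\rho' \to \rho_0$, so the shrinking regions isolating $h(x)$ approach those isolating $x$ itself, forcing $h$ to lie inside any prescribed neighborhood of the identity.

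The main obstacle is the absence of a genuine Markov partition on $\partial_\infty \tilde M$ in variable negative curvature: the cover $\{U_i\}$ cannot be made disjoint, so itineraries are non-unique and $h$ must be shown to be independent of the choice. Establishing well-definedness, continuity, and surjectivity simultaneously, using only $C^0$-closeness of $\rho'$ to $\rho_0$ and without any Lipschitz or smoothness control on either action, is the delicate step. One must leverage the uniform hyperbolicity of the geodesic flow to make the contraction estimates quantitative enough to survive a purely topological perturbation, while keeping the combinatorial framework flexible enough to accommodate the overlaps between the $U_i$.
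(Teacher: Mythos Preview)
Your proposal has the semi-conjugacy running in the wrong direction: the theorem asserts that the perturbed $\rho'$ is a factor of $\rho_0$, i.e.\ one needs a surjection $h$ with $h\circ\rho' = \rho_0\circ h$, so the source must carry $\rho'$ and the target $\rho_0$.  More seriously, the argument as written has a genuine gap at the decoding step.  You claim that the nested regions $\rho'(\gamma_{i_1})\cdots\rho'(\gamma_{i_k})(U_{i_{k+1}})$ shrink to a single point, but this requires the $\rho'(\gamma_i)$ themselves to be contractions on the relevant sets, and a $C^0$-small perturbation of a contraction need not contract at all.  The combinatorial inclusions you mention are indeed $C^0$-open, but the \emph{rate} at which nested images shrink is not; the blow-up and Cannon--Thurston examples constructed in Section~\ref{sec:blowup} show explicitly that $\rho'$ can collapse whole disks and have large wandering domains.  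Your approach is precisely the Sullivan coding scheme worked out by Kapovich--Kim--Lee \cite{KKL}, which the introduction discusses: it proves stability only under perturbations in a Lipschitz topology, exactly because one needs quantitative contraction to survive on the perturbed side.  Reversing the direction so that decoding happens under $\rho_0$ (where contraction is available) is more promising, but then well-definedness --- that two distinct $\rho'$-itineraries of the same point decode to the same $\rho_0$-point --- becomes the issue, and this too seems to require some expansivity of $\rho'$ that $C^0$-closeness does not supply.

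The paper's proof avoids boundary dynamics entirely and works inside the universal cover.  It realizes $\rho'$ as the holonomy of a foliation on the suspension bundle $E_{\rho'}$, builds an equivariant leafwise-$C^1$ map $f_{\rho'}: \tilde M\times S^{n-1}\to \UT\tilde M$ close to the identity (Lemma~\ref{lem:leafwise_imm}), and intersects the images of horizontal leaves with the weak-unstable foliation of geodesic flow to obtain a $\pi_1M$-invariant quasi-geodesic foliation $\Fqg_{\rho'}$ of $\tilde M\times S^{n-1}$.  The positive endpoint map $e^+_{\rho'}$ of these quasi-geodesics is then shown, via the uniform convergence group property (Proposition~\ref{prop:positive_const}), to be constant on each horizontal leaf $\tilde M\times\{p\}$, and the induced map $S^{n-1}\to S^{n-1}$ is the semi-conjugacy.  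The point is that the ``shrinking'' happens in $\tilde M$ via the Morse lemma for quasi-geodesics, not on the boundary sphere, so no hyperbolicity of $\rho'$ is ever invoked.
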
 

The statement of Theorem \ref{thm:local_rig} is similar in spirit to the extensions of the classical $C^1$-structural stability for Anosov (or more generally, Axiom A) systems to topological stability proved by Walters \cite{Walt} and Nitecki \cite{Nit} in the 1970s.  However, we are working in the context of group actions rather than individual diffeomorphisms, and further, we do not assume any regularity of the original boundary action that is to be perturbed.  Thus, our tools are by necessity fundamentally different.  

In the context of stability properties of group actions Sullivan \cite{Sul} characterized which subgroups of $\PSL(2,\C)$ exhibit $C^1$-stability\footnote{ Sullivan's main result is in the opposite direction to ours: he shows that, among subgroups of $\PSL(2,\C)$, $C^1$-stability implies convex-cocompactness.}  and also remarked that stability holds more generally within the class of actions on metric spaces that are {\em expansive-hyperbolic}, a class of actions that includes boundary actions of fundamental groups of closed negatively curved manifolds.   This program was worked out in detail only quite recently by Kapovich-Kim-Lee \cite{KKL}, who show that what they call {\em S-hyperbolic} actions (a weakening of Sullivan's expansivity-hyperbolicity condition) are stable under perturbation with respect to a Lipschitz topology.   While $S$-hyperbolic actions represent a broader class than those studied here, in the specific case of boundary actions of fundamental groups of negatively curved manifolds our result is stronger and quite different in spirit.  We do not aim to preserve ``hyperbolic''-like behaviour, and consider perturbations in the $C^0$-topology, which can be much more violent and introduce wandering domains.   Thus, one can view Theorem \ref{thm:local_rig} as a strict strengthening of Kapovich-Kim-Lee's topological stability for the restricted case of boundary actions.

\paragraph{Sharpness}  As hinted above, one cannot replace ``factor of" with ``conjugate to" in Theorem \ref{thm:local_rig}. In Section \ref{sec:blowup}, we show that nearby, non-conjugate topological factors do occur for boundary actions of closed negatively curved manifolds.  We give two sample constructions.  One comes from {\em Cannon--Thurston} maps, special to the case where $M$ is a hyperbolic $3$-manifold, and the other is a general ``blow-up" type construction, applicable to $C^1$ examples in all dimensions.

\paragraph{2. Global rigidity of slithering actions} 
In the case where $\dim(M) = 2$, and hence $\partial_\infty(M) = S^1$, a stronger global rigidity result for boundary actions of surface groups was proved by the second author in \cite{Mann} (see also \cite{BowdenGT},  \cite{Matsumoto16}).  Using the techniques of Theorem \ref{thm:local_rig} we can recover this, and in fact generalize it to the broader context of group actions on $S^1$ arising from {\em slitherings}  associated to skew-Anosov flows on 3-manifolds, in the sense of Thurston \cite{ThurstonSlithering}.   As we discuss in the next paragraph, these flows are basic examples in hyperbolic dynamics.  Our rigidity result is the following.   

\begin{theorem}[Global rigidity of skew-Anosov slithering actions]\label{thm:slith_rigid}
Let $\F^s$ be the weak stable foliation of a skew-Anosov flow on a closed $3$-manifold $M$, and $\rho_s: \pi_1M \rightarrow \Homeo_+(S^1)$ the associated slithering action.  Then the connected component of $\rho_s$ in $\Hom(\pi_1M, \Homeo_+(S^1))$ consists of representations ``semi-conjugate" to $\rho_s$ in the sense of Ghys \cite{Ghys87}. 
\end{theorem}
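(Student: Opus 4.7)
The plan is first to establish local rigidity of $\rho_s$ using techniques analogous to those of Theorem~\ref{thm:local_rig}, and then to promote this to global rigidity by an open--closed argument along paths in the connected component. For local rigidity, one works with the natural $\pi_1M$-action on the leaf space of $\tilde{\F}^s$, which under the skew-Anosov hypothesis is naturally compactified to a circle carrying the action $\rho_s$. The flow provides a rich supply of loxodromic elements whose attractor--repeller fixed-point configurations on $S^1$ persist under small $C^0$ perturbations in $\Hom(\pi_1M,\Homeo_+(S^1))$. Matching the perturbed fixed-point data to that of $\rho_s$ along a dense orbit (e.g., along the image of a Cayley graph of a finite generating set of $\pi_1M$) and extending by monotonicity should yield a degree-one monotone semi-conjugacy, giving an open neighborhood $V$ of $\rho_s$ in which every representation is Ghys-semi-conjugate to $\rho_s$.

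\textbf{Open--closed argument on the connected component.} Let $\mathcal{C}$ denote the connected component of $\rho_s$ in $\Hom(\pi_1M,\Homeo_+(S^1))$, and put
\[
\mathcal{S} \;=\; \{\rho \in \mathcal{C} : \rho \text{ is Ghys-semi-conjugate to } \rho_s\}.
\]
Since $\Homeo_+(S^1)$ is a path-connected topological group and $\pi_1M$ is finitely generated, $\Hom(\pi_1M,\Homeo_+(S^1))$ is locally path-connected, so $\mathcal{C}$ is path-connected. For any $\rho \in \mathcal{C}$, fix a path $\{\rho_t\}_{t \in [0,1]}$ in $\mathcal{C}$ with $\rho_0 = \rho_s$ and $\rho_1 = \rho$, and set $T = \{t \in [0,1] : \rho_t \in \mathcal{S}\}$. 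I would show $T = [0,1]$ by showing $T$ is nonempty, open, and closed. Openness comes from local rigidity applied at each $\rho_{t_0} \in \mathcal{S}$, together with transitivity of Ghys-semi-conjugacy: nearby representations are semi-conjugate to $\rho_{t_0}$, hence to $\rho_s$. Closedness follows from a compactness argument: the space of degree-one monotone surjections $S^1 \to S^1$ is compact after a basepoint normalization, so if $t_n \to t$ with semi-conjugacies $h_{t_n}$, a subsequence converges to a limit $h$ satisfying $h \circ \rho_t = \rho_s \circ h$, placing $t$ in $T$.

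\textbf{Main obstacle.} The principal difficulty is extending local rigidity from $\rho_s$ to an arbitrary $\rho_{t_0} \in \mathcal{S}$, as required for the openness half of the open--closed argument. A general $\rho_{t_0} \in \mathcal{S}$ may exhibit wandering intervals or exceptional minimal sets collapsed by the semi-conjugacy $h_{t_0}$, so the source--sink dynamics needed in the local rigidity argument may be supported only on a proper (Cantor or co-wandering) subset of $S^1$. The expected resolution is that the Theorem~\ref{thm:local_rig}-style construction only uses invariants of the Ghys-semi-conjugacy class---the combinatorial order structure of attracting/repelling pairs of loxodromic elements---and so goes through uniformly across $\mathcal{S}$, provided one handles perturbations of wandering arcs (which can deform but not disappear under small perturbations, by a continuity argument on the sizes of gaps). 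This reduction of the dynamics of a general $\rho_{t_0} \in \mathcal{S}$ to the core dynamics shared with $\rho_s$ is the conceptual crux of the proof.
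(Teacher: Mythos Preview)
Your open--closed strategy matches the paper's high-level outline, but there is a genuine gap in the openness step, and it is precisely the one you flag as the ``main obstacle'' without resolving.  You propose to re-run a local rigidity argument at each $\rho_{t_0}\in\mathcal{S}$, and you correctly observe that such a $\rho_{t_0}$ may have wandering intervals and lack the clean source--sink dynamics of $\rho_s$.  Your suggested fix---that the local argument ``only uses invariants of the semi-conjugacy class''---is not substantiated, and in fact the paper's local argument (which is foliation-theoretic, not fixed-point-matching) does use minimality and the specific geometry of the slithering in an essential way.  So as written, openness is not established.

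The paper sidesteps this difficulty entirely with a trick you are missing: one never needs local rigidity at $\rho_{t_0}$, only at $\rho_s$.  Since $\rho_s$ is minimal (Barbot), the weak-conjugacy map $h$ from any $\rho'\in\mathcal{S}$ to $\rho_s$ is automatically continuous and surjective, hence a degree-one monotone map of $S^1$, hence approximable by honest homeomorphisms $h_0$.  Then $h_0\rho' h_0^{-1}$ lies in the neighborhood $U$ of $\rho_s$ where local rigidity holds, so some neighborhood $V$ of $h_0\rho' h_0^{-1}$ consists of representations weakly conjugate to $\rho_s$, and $h_0^{-1}Vh_0$ is the required neighborhood of $\rho'$.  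This reduces openness at every point of $\mathcal{S}$ to local rigidity at the single point $\rho_s$.

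Two further remarks.  First, your local-rigidity sketch (matching attractor--repeller pairs of loxodromics along a dense orbit) is quite different from the paper's approach and is not fleshed out enough to assess; the paper instead builds a leafwise $C^1$-close immersion of the perturbed suspension into $M$, intersects with the unstable foliation to obtain a leafwise quasi-geodesic foliation, and uses endpoint maps plus density of periodic orbits to produce the monotone semi-conjugacy.  Second, for closedness the paper simply cites Ghys--Matsumoto (weak-conjugacy classes are closed because they are cut out by rotation-number data, equivalently the bounded Euler class), which is cleaner than your compactness argument and avoids worrying about continuity of limit maps.
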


\noindent 
Definitions and properties of skew-Anosov flows and slitherings are recalled in Section \ref{sec:skew}.  Note that the notion of semi-conjugacy of circle maps in the statement above is {\em not} the same as in the definition of topological factor; unfortunately the terminology ``semi-conjugacy" in this sense has also become somewhat standard.  To avoid confusion, we will follow \cite{MannWolff} and use the term {\em weak conjugacy} for this property of actions on the circle.  It has also been referred to as ``monotone equivalence'' by Calegari.  

A consequence of the above theorem is a new, independent proof of the main result of \cite{Mann} on global $C^0$ rigidity of geometric surface group actions on $S^1$.  See Corollary \ref{cor:Mann} below.

\paragraph{3. Inequivalent flows on a common manifold} 
Anosov (or {\em uniformly hyperbolic}) flows are important examples of dynamical systems, due to their stability: as originally shown by Anosov, $C^1$-small perturbations of these flows give topologically conjugate systems. Classical examples in dimension 3 include suspension flows of hyperbolic automorphisms of tori, and geodesic flows on the unit tangent bundles of hyperbolic surfaces.    The general problem of which manifolds admit Anosov flows, and the classification of such flows, is a fundamental problem in both topology and dynamics. 

The first exotic examples of Anosov flows were given by Franks and Williams \cite{FW}.  They produced non-transitive examples of flows that have separating transverse tori. Handel and Thurston \cite{HT} then gave new transitive examples, and their work planted the seeds for the definition of a general procedure (namely, the Fried--Goodman Dehn surgery) to produce new flows from old ones, later used to 
give the first examples of Anosov flows on hyperbolic 3-manifolds.

After existence, the next natural question regarding Anosov flows on a given manifold is that of {\em abundance}: how many Anosov flows, up to topological equivalence, does a given manifold support?  Results of Ghys \cite{Ghys84} and recently announced work of Barbot-Fenley \cite{BaFe} imply that principal Seifert fibered spaces admit 
at most two distinct Anosov flows up to equivalence, (cf.\ Remark \ref{rem:equiv_lifts} below). However, the case of graph manifolds, or more generally manifolds with non-trivial JSJ-decompositions, is less rigid and there are indeed examples that exhibit abundance.
The first example of a closed 3-manifold admitting at least two distinct Anosov flows was given by Barbot \cite{BarbotCAG}, and  Beguin--Bonnati--Yu \cite{BeguinBonattiYu} found examples of manifolds admitting $N$ distinct Anosov flows for arbitrarily large $N$. All these examples occur on manifolds with non-trivial JSJ-decompositions and have many (incompressible) transverse tori. This leaves open the question of the abundance for hyperbolic manifolds, which appears as Problem 3.53~(C), attributed to Christy, in Kirby's problem list \cite{Kirby}

Using techniques developed for the proof of Theorem \ref{thm:slith_rigid}, we prove the following existence result for flows, thus resolving this problem.  
\begin{theorem}[Christy's problem] \label{thm:InequivalentAnosov}
For any $N \in \N$, there exist closed, hyperbolic $3$-manifolds that support $N$ Anosov flows that are distinct up to topological equivalence.
\end{theorem}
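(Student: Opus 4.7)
The plan is to start with a closed $3$-manifold $M_0$ carrying $N$ topologically inequivalent skew-Anosov flows, which can be arranged by adapting the plug-gluing techniques of Béguin--Bonatti--Yu \cite{BeguinBonattiYu} (see also Barbot \cite{BarbotCAG}) so that each resulting flow is skew, and then to hyperbolise $M_0$ by performing Fried--Goodman Dehn surgery along periodic orbits common to all $N$ flows. Topological inequivalence of the flows after surgery is then to be detected using Theorem \ref{thm:slith_rigid}, which packages each skew-Anosov flow into a slithering representation whose weak conjugacy class is a $\pi_0$-invariant of the representation variety.

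More concretely, each of the initial flows $\phi_i^0$ yields a slithering representation $\rho_i^0: \pi_1 M_0 \to \Homeo_+(S^1)$. Topological inequivalence of the $\phi_i^0$ prevents the $\rho_i^0$ from being weakly conjugate even after precomposing with an automorphism of $\pi_1 M_0$, so by Theorem \ref{thm:slith_rigid} the $\rho_i^0$ lie in pairwise distinct connected components of $\Hom(\pi_1 M_0, \Homeo_+(S^1))$ modulo the action of $\mathrm{Out}(\pi_1 M_0)$. Next, choose a finite collection of periodic orbits shared by all $N$ flows and perform simultaneous Fried--Goodman surgery with slopes chosen to avoid the finite exceptional set in Thurston's hyperbolic Dehn surgery theorem. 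The output is a closed hyperbolic $3$-manifold $M$ carrying Anosov flows $\phi_1, \ldots, \phi_N$ inherited from the $\phi_i^0$, each with its own slithering representation $\rho_i: \pi_1 M \to \Homeo_+(S^1)$.

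The main obstacle, and the technical heart of the proof, is to show that the $\rho_i$ still occupy pairwise distinct weak conjugacy classes modulo $\mathrm{Out}(\pi_1 M)$ (which is finite by Mostow rigidity, since $M$ is hyperbolic). To do this one should isolate a continuous, conjugation-invariant invariant of slithering representations --- for instance the Euler class, a translation-number function evaluated at the peripheral meridians created by the surgery solid tori, or a bounded cohomology class derived from the slithering --- that can be computed explicitly from the surgery data and shown to take pairwise distinct values on the $\rho_i$. By choosing the initial number of flows slightly larger than $N$ (to absorb a finite ambiguity from $\mathrm{Out}(\pi_1 M)$), one then obtains at least $N$ genuinely inequivalent flows on $M$, and an appeal to Theorem \ref{thm:slith_rigid} upgrades this separation of representations to topological inequivalence of the $\phi_i$ themselves. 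The hard part is the explicit computation and comparison of these invariants under Fried--Goodman surgery, which requires fine control of the slithering representation near the excised orbits.
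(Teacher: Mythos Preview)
Your proposal has several substantive gaps that prevent it from going through as written.

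\textbf{The starting point is not available.} The B\'eguin--Bonatti--Yu examples (and Barbot's earlier ones) are built on graph manifolds and have many incompressible transverse tori; the resulting flows are not skew-Anosov, and there is no indication in the literature or in your sketch of how to ``adapt'' their plug constructions to produce skew flows. Skew-Anosov flows are $\R$-covered, and the presence of transverse tori is an obstruction to this. So the very first sentence assumes something that is, as far as anyone knows, unestablished and quite possibly false.

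\textbf{Common periodic orbits and hyperbolisation.} Even granting a manifold $M_0$ with $N$ inequivalent skew flows, there is no reason to expect a finite collection of periodic orbits shared by all of them. And even if such orbits existed, drilling them out of a manifold with essential tori does not produce a hyperbolic cusped manifold: the tori survive, so Thurston's hyperbolic Dehn surgery theorem does not apply. You would need to drill enough orbits to kill \emph{all} the JSJ tori, and arranging this simultaneously for $N$ flows while keeping the complement atoroidal is not addressed.

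\textbf{The role of Theorem \ref{thm:slith_rigid} is misplaced.} If you already know the slithering representations $\rho_i$ lie in distinct weak-conjugacy classes modulo $\mathrm{Out}(\pi_1 M)$, the flows are automatically inequivalent: a topological equivalence of flows induces a conjugacy of slitherings after twisting by an outer automorphism. Theorem \ref{thm:slith_rigid} is not needed for that implication, and it does not help with the actual hard step (computing invariants through surgery), which you explicitly leave undone.

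By contrast, the paper's argument runs in the opposite direction. It starts not with inequivalent flows but with a single manifold --- the $k$-fold fibrewise cover $M$ of $\UT\Sigma$ --- on which all the lifted geodesic flows are \emph{equivalent} (cf.\ Remark \ref{rem:equiv_lifts}). The inequivalence is manufactured by the surgery itself: one chooses a filling geodesic $c\subset\Sigma$ with no symmetries (Lemma \ref{lem:geod_katie}), so that after surgery on a lift $K$ of $c$, any self-homeomorphism of the resulting hyperbolic manifold must fix the core curve up to homotopy and, when pushed back to $M$, is homotopic to the identity (Lemma \ref{lem:new_slope2}). This rigidity, combined with the fact that the different lifts of $\rho$ assign different rotation numbers to fixed generators $\alpha_1,\beta_1$, forces the surgered flows to be inequivalent. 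Theorem \ref{thm:slith_rigid} is not invoked; what is used from the skew-Anosov theory is the classification of freely homotopic periodic orbits via the map $\eta$ (Proposition \ref{prop:freely_homotopic}) and Fenley's result that negative surgery on covers of geodesic flow stays skew (Proposition \ref{prop:Dehn_surgery_Fenely}).
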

\noindent The hyperbolic manifolds in Theorem \ref{thm:InequivalentAnosov} and the flows are described explicitly, using Dehn filling constructions.  
In addition to residing on hyperbolic manifolds, our examples contrast with those of Beguin--Bonnati--Yu in that they are {\em skew}.  They also have the further property of being {\em contact Anosov}, meaning that they are Reeb Flows for certain contact structures on these manifolds and are in particular volume preserving.  (See \cite{Foulon_Hasselblatt} for a general contstruction of contact Anosov flows by Dehn surgery.)

\paragraph{4. Topological stability of geodesic flows}
A major tool in the proof of Theorem \ref{thm:local_rig} is a ``straightening" result for quasi-geodesic flows.  This technique can also be used to recover the topological stability result for Anosov flows of Kato--Morimoto \cite[Theorem A]{KatoMorimoto} in the special case of the geodesic flow on a compact manifold of negative curvature.   

\begin{theorem}[Alternative proof of \cite{KatoMorimoto}, special case]  \label{cor:straightening}
Let $M$ be a manifold of negative curvature and $\Phi_t$ the geodesic flow on $\UT M$.  There exist $\epsilon, R > 0$ such that, if $\Psi_t$ is a flow such that each flowlines of $\Psi_t(x)$ remains $\epsilon$-close to the flowline $\Phi_t(x)$ for $t \in [0,R]$, then there is a continuous function $p(x, t)$ on $\UT M \times \R$ and surjective map $h: \UT M \to \UT M$ such that 
$h \circ \Psi_t(x) = \Phi_{p(x,t)} \circ h(x)$.    
\end{theorem}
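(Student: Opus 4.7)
The plan is to show that, under the hypothesis, every orbit of $\Psi_t$ lifts to a uniform quasi-geodesic in the universal cover $\tilde{M}$, and then to use the stability of quasi-geodesics in negative curvature to produce the semi-conjugacy by straightening each $\Psi$-orbit onto the unique nearby genuine geodesic. The map $h$ is then defined by the closest-point projection onto these straightening geodesics, and $p(x,t)$ records the time reparametrization needed to match geodesic-flow speed to $\Psi$-flow speed.

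The first step is to lift to $\UT \tilde{M}$, obtaining a $\pi_1M$-equivariant flow $\tilde{\Psi}_t$. Let $\pi:\UT\tilde M\to \tilde M$ denote the footpoint projection. By hypothesis, $\pi\bigl(\tilde{\Psi}_t(x)\bigr)$ stays within $\epsilon$ of the unit-speed geodesic $\pi\bigl(\tilde{\Phi}_t(x)\bigr)$ for $t \in [0, R]$. Concatenating these overlapping local approximations, together with a lower bound on geodesic divergence coming from the negative curvature of $M$, one upgrades this local closeness to the global statement that $t \mapsto \pi\bigl(\tilde{\Psi}_t(x)\bigr)$ is a $(K, C)$-quasi-geodesic for constants $K, C$ depending only on $\epsilon$, $R$ and the curvature bounds. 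The Morse stability lemma for quasi-geodesics in $\mathrm{CAT}(-\kappa)$ spaces then gives, for each $x$, a unique bi-infinite geodesic $\gamma_x$ in $\tilde{M}$ at bounded Hausdorff distance from the $\tilde\Psi$-orbit of $x$, with endpoints $\xi^\pm(x) \in \partial_\infty \tilde{M}$ that depend continuously on $x$.

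With this straightening in hand, I define $h(x) \in \UT \tilde{M}$ to be the unit tangent vector to $\gamma_x$ at the closest-point projection of $\pi(x)$ onto $\gamma_x$, oriented toward $\xi^+(x)$, and define $p(x,t)$ implicitly by $h\bigl(\tilde{\Psi}_t(x)\bigr) = \tilde{\Phi}_{p(x,t)}\bigl(h(x)\bigr)$. Freeness of $\tilde{\Phi}$ on each geodesic makes $p$ well-defined, and the cocycle identity is automatic. Continuity of $h$ and $p$ follow from continuity of $\gamma_x$ in $x$ (a standard consequence of Morse stability) and of closest-point projection onto geodesics, and $\pi_1M$-equivariance of $h$ is automatic from the equivariance of $\tilde\Psi$. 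Thus $h$ descends to a continuous self-map of the compact manifold $\UT M$ intertwining $\Psi$ and $\Phi$ in the claimed sense.

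The main obstacle will be two-fold. First, the quasi-geodesic estimate in Step 1 requires a careful quantitative choice: $\epsilon$ must be small relative to the Morse constant, and $R$ large enough that $\tilde\Psi$ trajectories make definite forward progress in the direction of $\tilde\Phi$. Verifying that these constraints are compatible, and that the resulting constants are uniform, is where the bulk of the work lies. Second, surjectivity of $h$ is not built into the construction. The natural approach is to observe that the lift $\tilde h$ lies at uniformly bounded distance from the identity in $\UT\tilde M$ (the distance being controlled by the Morse constant), then use the geodesic straight-line homotopy from $\tilde h$ to the identity to show that the induced map on the compact quotient $\UT M$ has degree one, and hence is surjective.
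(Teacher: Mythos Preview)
Your proposal is correct and follows essentially the same route as the paper: lift to $\UT\tilde M$, invoke the local-to-global principle for quasi-geodesics to straighten each $\tilde\Psi$-orbit onto a unique geodesic $\gamma_x$, and define $h$ via closest-point projection onto $\gamma_x$. The paper's $h_0$ is exactly your $h$, and the paper likewise observes that the resulting time-change $p(x,t)$ need not be monotone in $t$ (it then invokes the Barbot--Ghys averaging trick to upgrade to an orbit equivalence, but that is more than the stated theorem requires).

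The one substantive difference is your argument for surjectivity. The paper argues that the endpoint map $e^\pm$ is surjective onto the space of geodesics by showing its image contains a fundamental domain for the cocompact $\pi_1M$-action on $(\partial_\infty\tilde M)^2\setminus\Delta$ (the argument of Lemma~\ref{lem:surjective}); surjectivity of $h$ then follows because closest-point projection of a bi-infinite quasi-geodesic onto its companion geodesic is onto. Your degree argument is a legitimate alternative, but note that ``bounded distance from the identity'' alone is not quite enough in $\UT\tilde M$, since the fibre $S^{n-1}$ is not contractible: you should make explicit that for $\epsilon$ small and $R$ large the endpoints $\xi^\pm(x)$ are \emph{close} to those of the $\Phi$-orbit through $x$ (this is Lemma~\ref{lem:fellow_travel}), so that $\tilde h$ is genuinely $C^0$-close to the identity and the straight-line homotopy in both base and fibre is unambiguous. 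With that refinement your argument goes through and is arguably more self-contained than the cocompactness route.
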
 

We note also that a related, and more direct, notion of quasi-geodesic straightening appears also in Ghys' work \cite{Ghys84} on Anosov geodesic flows.  However, despite this parallel the proofs are essentially different.  
 
\paragraph{Outline}
\begin{itemize}[leftmargin=*]
\item  Section \ref{sec:prelim} covers general background on foliations, suspensions, and large-scale geometry in negative curvature.  
\item Section \ref{sec:boundary_rig} is devoted to the proof of Theorem \ref{thm:local_rig}, followed by Theorem \ref{cor:straightening} 
\item In Section \ref{sec:blowup} we construct examples of non-conjugate actions that are $C^0$-close to the boundary actions, illustrating some of the pathological behaviour that can occur despite topological stability. 
\item In Section \ref{sec:skew} we recall the necessary background on skew-Anosov flows and slitherings,  prove Theorem \ref{thm:slith_rigid} and derive global rigidity for lifts under finite covers of the boundary action of a surface group.  
\item Section  \ref{sec:InequivalentAnosov} constructs 3-manifolds with inequivalent skew-Anosov flows, proving Theorem \ref{thm:InequivalentAnosov}.
\end{itemize}

\paragraph{Acknowledgements}
Parts of this work were completed during the Matrix centre workshop ``Dynamics, foliations, and geometry in dimension 3."
K.M. was partially supported by NSF grant DMS-1606254, CAREER award DMS-1844516, and a Sloan fellowship.  We thank B. Tshishiku for explaining some of the machinery in  the proof of  Proposition \ref{prop:blowup} and J. Manning for the reference to Hass and Scott's work on disk flow.   We are indebted to T. Barthelm\'{e} for helpful discussion, and C. Bonatti and F B\'eguin, and B. Yu for comments on an early draft of the manuscript. We also thank T. Barbot and S. Fenley for communicating their recent work as well as the anonymous referee whose detailed comments greatly improved the exposition.
\section{Preliminaries} \label{sec:prelim}
This section contains some general background material on the setting of our work, the results and framework summarized here will be used throughout.  

\subsection{Suspension foliations, flat bundles and holonomy}  \label{sec:suspension}

For an oriented manifold $N$, we let  $\Homeo_+(N)$ denote the group of orientation-preserving homeomorphisms of $N$.  For a group $\Gamma$,  the {\em representation space} $\Hom(\Gamma, \Homeo_+(N))$ is the space of homomorphisms $\Gamma \to \Homeo_+(N)$ equipped with the compact-open topology.   The case of particular interest to us, from a foliations perspective,  is when $\Gamma = \pi_1(B)$ is the fundamental group of a closed manifold $B$.  In this case one may form the {\em suspension} of a representation $\rho \in \Hom(\Gamma, \Homeo_+(N))$.  The suspension is a foliated $N$-bundle over $B$ with total space given by the quotient 
\[ E_\rho := (\tilde{B}  \times  N)/\pi_1(B), \]
where $\pi_1(B)$ acts diagonally by $\rho$ on $N$ and by deck transformations on the universal cover $\tilde{B}$ of $B$.   {\em Horizontal leaves} are  subsets of the form $\tilde{B} \times \{p\} \subset N \times \tilde{B}$.  The diagonal $\pi_1(B)$-action maps horizontal leaves to horizontal leaves, so the foliation of $\tilde{B}\times N$  by horizontals descends to a foliation on $E_\rho$ transverse to the fibers of the bundle $E_\rho \to B$.  
In our intended applications, foliations are always co-oriented and representations have image in the group of orientation-preserving homeomorphisms of $N$.  Though this is not strictly necessary for much of the background discussed here, we will take it as a standing assumption from here on.

We will typically use the notation $E_\rho$ to denote this foliated suspension space, and use other notation (e.g. simply $M$) when we wish to forget the transverse foliation on it. 

\subsection{Manifolds of negative curvature and boundaries at infinity}  \label{sec:neg_manifolds}

We briefly summarize standard results on manifolds of negative curvature that will be used in the sequel.   Further background can be found in standard references such as \cite{BGS, BH}.

Let $M$ be a closed Riemannian manifold of negative curvature.  Then its universal cover $\tilde{M}$ is a Hadamard manifold of pinched negative curvature.  In particular, it is uniquely geodesic and is a $\delta$-hyperbolic space for some $\delta$.  
Any $\delta$-hyperbolic space has a compactification by a ``boundary at infinity".  In the case of interest to us, this boundary is topologically a sphere of dimension $\dim(M) -1$.  Points on the boundary correspond to equivalence classes of geodesic rays, where two unit speed geodesics $c_1$ and $c_2: [0,\infty) \to \tilde{M}$ are equivalent if the distance $d(c_1(t), c_2(t))$ is uniformly bounded.   See \cite[III.H.3]{BH} for a general introduction in the $\delta$-hyperbolic setting, or \cite{BGS} for the Hadamard manifold setting. 
One way to specify the topology on $\partial_\infty \tilde{M}$ is as follows: fixing $x \in \tilde{M}$ and a geodesic ray $\alpha$ from $x$ to a point $\xi \in \partial_\infty \tilde{M}$, define neighborhoods $U_{r, d}(\alpha)$ of $\alpha$ to be the set of (the equivalence classes of) geodesic rays based at $x$ that stay distance at most $d$ from $\alpha$ on a ball of radius $r$ about $x$.   Such sets form a basis for the topology.  
Equivalently, one may take an exhaustion of $\tilde{M}$ by compact sets $K_i$, fix any $d>0$, and define neighborhoods $U_i(\alpha)$ of a geodesic $\alpha$ to be the sets of geodesic rays that stay within distance $d$ of $\alpha$ on $K_i$.  These also form a basis for the topology.  
Deck transformations of $\tilde{M}$ act by isometries, sending geodesics to geodesics, and this extends to an action by homeomorphisms on the boundary.  

Given a unit-speed geodesic ray $\alpha: [0, \infty) \to \tilde{M}$, the {\em Busemann function} $B_\alpha : \tilde{M} \to \R$  is defined by
\[B_\alpha (x) = \lim \limits_{t \to \infty} \left(d(\alpha(t), x) - t\right).\]   
Level sets of $B_\alpha$ are called {\em horospheres}.    Busemann functions on smooth Hadamard manifolds are always $C^2$ (as was proved in \cite{HIH}) and the horospheres $B_\alpha$ are perpendicular to geodesics.  In our setting of pinched negative curvature and bounded geometry -- this comes from the fact that the metric on $\tilde{M}$ is lifted from the compact manifold $M$ -- one can show the Busemann functions $B_\alpha$ are in fact smooth, although we will not need to use this higher regularity.      

In the case where $M$ is a surface, the boundary at infinity can be used to give a convenient description of the unit tangent bundle of $\tilde{M}$.  To any distinct triple of points $(\xi, \eta, \nu)$ in $(\partial_\infty \tilde{M})^3$, one can associate the tangent vector to the (directed) geodesic from $\xi$ to $\eta$ at the unique point $p$ such that the geodesic from $p$ to $\nu$ is orthogonal to the geodesic with endpoints $\xi$ and $\eta$.   This assignment defines a homeomorphism between the space of distinct triples in $\partial_\infty \tilde{M}$ and $\UT\tilde{M}$.   
In general, even for higher dimensional compact manifolds, the action of $\pi_1M$ on the space of distinct triples of $\partial_\infty \tilde M$ is properly discontinuous and cocompact. The reader may consult \cite[Prop 1.13]{Bowditch} for a proof phrased there in terms of the action on the Gromov boundary of a hyperbolic group.
More generally, a group acting on a space such that the induced action on the space of distinct triples is properly discontinuous and cocompact is said to be a {\em uniform convergence group}.  The following well-known property applies to any uniform convergence group action, but we state it in the form which will be useful to us later on.    

\begin{proposition}[\cite{Bowditch} Prop 3.3] \label{prop:convergence_group}
For each $x \in \partial_\infty \tilde{M}$, there exists distinct $p, q \in \partial_\infty \tilde{M}$ and a sequence of elements $\gamma_n \in \pi_1M$ such that $\gamma_n(x) \to p$ and $\gamma_n(y) \to q$ for all $y \neq x$.  
\end{proposition}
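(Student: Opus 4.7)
The plan is to invoke the abstract convergence property for the $\pi_1M$-action on $\partial_\infty\tilde M$: any sequence of pairwise distinct group elements admits a subsequence $(\gamma_n)$ together with ``exceptional points'' $a,b \in \partial_\infty\tilde M$ such that $\gamma_n \to b$ uniformly on compact subsets of $\partial_\infty\tilde M \setminus \{a\}$. This convergence property is in fact equivalent to properly discontinuous action on the space of distinct triples, see \cite{Bowditch}. The strategy is then to engineer such a sequence whose repelling exceptional point $a$ is exactly the prescribed $x$; once this is done, setting $q := b$ and $p := \lim \gamma_n(x)$ (extracting a further subsequence if needed) yields the desired dynamics, provided one also checks $p \ne q$.

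To force $a = x$, fix any $z_0 \in \partial_\infty\tilde M \setminus \{x\}$ and a sequence $y_n \in \partial_\infty\tilde M \setminus \{x\}$ converging to $x$. The triples $(x, z_0, y_n)$ leave every compact subset of the space of distinct triples as $n \to \infty$, since two of their coordinates collide in the limit. By cocompactness of the $\pi_1M$-action on distinct triples, there exist $\gamma_n \in \pi_1M$ carrying these triples into a fixed compact fundamental set $K$ of distinct triples, and after passing to a subsequence we obtain limits $\gamma_n(x) \to p$, $\gamma_n(z_0) \to q$, $\gamma_n(y_n) \to r$ with $p,q,r$ pairwise distinct (since $K$ is compact in the distinct-triple space). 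Note that the $\gamma_n$ are eventually distinct, else some fixed element would send the injective sequence $(y_n)$ to a constant sequence.

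Now apply the convergence property to $(\gamma_n)$ and let $a, b$ denote the associated exceptional points. Since $\gamma_n(x)$ and $\gamma_n(z_0)$ tend to different limits $p$ and $q$, the repelling point $a$ must coincide with either $x$ or $z_0$. The case $a = z_0$ is ruled out as follows: for large $n$ the points $y_n$ lie in a fixed compact neighborhood of $x$ disjoint from $z_0$, so uniform convergence on that neighborhood forces $\gamma_n(y_n) \to b = p$, contradicting $\gamma_n(y_n) \to r$ together with $r \ne p$. Hence $a = x$ and correspondingly $b = q$, so $\gamma_n(y) \to q$ for every $y \ne x$, while $\gamma_n(x) \to p \ne q$ by construction, which is precisely the conclusion. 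The main subtlety is arranging the sequence so that its collapsing point coincides with the prescribed $x$; this is exactly what the choice of input triples $(x, z_0, y_n)$ collapsing onto the diagonal at $x$ is designed to enforce.
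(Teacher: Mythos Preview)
The paper does not supply its own proof of this proposition; it is quoted from Bowditch \cite{Bowditch} and the reader is referred there for details. Your argument is essentially the standard one (and close in spirit to Bowditch's): use cocompactness on triples to pull a degenerating family $(x, z_0, y_n)$ back into a compact set, then invoke the convergence property and pin down the repelling point.

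One small slip: your justification that the $\gamma_n$ are eventually distinct is misstated. A fixed element $\gamma$ would not send the sequence $(y_n)$ to a constant sequence; rather, if $\gamma_n = \gamma$ along a subsequence then $\gamma_n(y_n) = \gamma(y_n) \to \gamma(x)$ by continuity, while also $\gamma_n(x) = \gamma(x)$, forcing $r = p$ and contradicting the pairwise distinctness of $(p,q,r)$. With that correction the argument is sound.
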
 

Points $x \in \partial_\infty \tilde{M}$ with the property above are called {\em conical limit points} of the action.   A proof and further discussion can be found in \cite[\S 3]{Bowditch}.

\subsection{Geodesic flow} \label{sec:flow}  
Associated to the geodesic flow on the unit tangent bundle $\UTtM$ on the universal cover of a manifold of negative curvature are two transverse foliations, each of codimension $\dim(M) -1$.   
The leaf space of each can be identified with $\partial_\infty \tilde{M}$.  The {\em weak stable} foliation, denoted $\F^s$, has leaves $L^s(\zeta)$, for $\zeta \in \partial_\infty \tilde{M}$, consisting of the union of all geodesics with common forward endpoint $\zeta$.  The {\em weak unstable} foliation $\F^u$ consists of leaves $L^u(\xi)$ formed by geodesics with common negative endpoint $\xi$.   Both descend to foliations on $\UT M$.  

``Stable" and ``unstable" here have a precise dynamical meaning  -- the geodesic flow in negative curvature is {\em Anosov}, and the weak stable (resp.\ weak unstable) leaves consist of geodesics that converge (resp.\ diverge) exponentially (see the proof of Lemma \ref{lem:endpoint_cont} below), but we do not need any further dynamical framework at the moment, and defer a more detailed discussion to  Section \ref{sec:skew}.   
What we will use is that $\F^s$ and $\F^u$ are transverse, and also that these foliations can be described naturally in terms of the suspension of the boundary action of $\pi_1M$, as we explain now.   

The tangent bundle $\UTtM$ may also be canonically identified with $\tilde{M} \times \partial_\infty \tilde{M} = \tilde{M} \times S^{n-1}$ via the {\em positive endpoint map} which assigns to each unit tangent vector $v$ the forward endpoint of the oriented geodesic tangent to $v$.  Under this identification the horizontal sets $\tilde{M} \times \{p\}$ are the leaves of $\F^s$ and the natural projection $\tilde{M} \times \partial_\infty \tilde{M} \to \UT M$ is the quotient via the diagonal action of $\pi_1M$ by deck transformations on $\tilde{M}$ and the boundary action on $S^{n-1}$.  Thus, the suspension foliation of the boundary action gives the weak stable foliation of  geodesic flow, or equivalently {\em the holonomy of $\F^s$ is the boundary action}. 
If instead one uses the negative endpoint of oriented geodesics to identify $\UTtM$ with $\tilde{M} \times \partial_\infty \tilde{M}$, the suspension of the boundary action is the weak unstable foliation.   

\subsection{Quasi-geodesics.}

Let $c \geq 0, k\geq 1$.  A curve $\alpha$ in a metric space $X$ is a {\em (c,k) quasi-geodesic} if 
\[ \tfrac{1}{k} d(\alpha(x), \alpha(y)) - c \leq  |x-y| \leq  k \,d(\alpha(x), \alpha(y)) + c\] holds for all $x, y$ in the domain of $\alpha$.  
Often we will work with unparametrized rectifiable curves in $X$. Such a curve is quasi-geodesic if its arc length parametrization is.  
We recall two well-known and useful properties of quasi-geodesics. 

\begin{lemma}[Local-to-global principle, see \cite{CDP} Theorem 1.4] \label{lem:local_to_global}
Let $X$ be a $\delta$-hyperbolic metric space.  For any $c \geq 0, k \geq 1$, there exists $L>0$ and $c', k'$ such that every curve which is a $(c,k)$ quasi-geodesic on each subsegment of length $L$ is globally a $(c', k')$ quasi-geodesic.
\end{lemma}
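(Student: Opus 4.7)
The plan is to approximate $\alpha$ by a piecewise geodesic path whose vertices lie on $\alpha$ at controlled spacing, argue by induction (using the $\delta$-thin triangle condition) that this piecewise geodesic is itself quasi-geodesic, and then transfer the conclusion back to $\alpha$.

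Concretely, I would arc-length parametrize $\alpha$ and set $x_i := \alpha(iL/2)$, then form the piecewise geodesic $\beta$ whose edges are minimizing geodesic segments $[x_{i-1}, x_i]$. Applying the local $(c,k)$-hypothesis to sub-arcs of parameter length $L/2$ and $L$ gives $d(x_{i-1}, x_i) \in [L/(2k) - c,\, L/2]$ and $d(x_{i-1}, x_{i+1}) \geq L/k - c$, which together bound the Gromov product at each interior vertex by $(x_{i-1} \mid x_{i+1})_{x_i} \leq \tfrac{L}{2}(1 - 1/k) + \tfrac{c}{2}$. For $L$ chosen large compared to $\delta$ and $c$, the edges of $\beta$ are long while the Gromov products at the vertices are bounded by a fixed multiple of the edge lengths depending only on $k$.

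The geometric heart of the argument is to show that, for $L$ sufficiently large depending on $\delta, c, k$, this piecewise geodesic $\beta$ is globally a $(c'', k'')$-quasi-geodesic. I would prove this by induction on the number of edges $N$, simultaneously maintaining the invariant that $\beta$ is contained in an $R$-neighborhood of the geodesic $[x_0, x_N]$, for some $R = R(\delta, c, k)$. The inductive step analyzes the geodesic triangle on $x_0, x_{N-1}, x_N$: the $\delta$-slim triangle property, together with the inductive hypothesis for $\beta$ through $x_{N-1}$, forces $[x_0, x_N]$ to fellow-travel $[x_0, x_{N-1}]$ up to near $x_{N-1}$ and then $[x_{N-1}, x_N]$. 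The Gromov product bound at $x_{N-1}$ quantifies the amount of ``lost'' length at the kink, yielding a per-step estimate $d(x_0, x_N) \geq d(x_0, x_{N-1}) + d(x_{N-1}, x_N) - (E_\delta + E_{\mathrm{kink}})$. Summing these estimates over $N$ produces the desired global linear lower bound.

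Finally, I would transfer the conclusion to $\alpha$. The arc length of $\alpha$ between $x_{i-1}$ and $x_i$ is exactly $L/2$, comparable to $d(x_{i-1}, x_i)$, so the total arc length of $\alpha$ between any two vertices is linearly comparable to the length of $\beta$ between them; combined with the triangle inequality for points between consecutive vertices, this promotes the quasi-geodesic bound for $\beta$ to one for $\alpha$ with constants $(c', k')$ depending only on $(c, k, \delta)$. The main obstacle I expect is the quantitative step in the induction: one must verify that the $O(\delta)$ slippage from each slim-triangle estimate, plus the Gromov-product kink loss at each vertex, remain strictly dominated by the per-edge length contribution $\geq L/(2k) - c$, and it is precisely this requirement which forces $L$ to be chosen large (as an explicit function of $\delta, c, k$) but finite.
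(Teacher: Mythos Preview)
The paper does not prove this lemma: it is stated with a citation to \cite{CDP}, Theorem 1.4, and used as a black box. So there is no ``paper's own proof'' to compare against.

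That said, your proposal is essentially the standard argument (and indeed the one in \cite{CDP}): discretize $\alpha$ at evenly spaced parameter values, connect by geodesic segments, use the local hypothesis to bound edge lengths below and Gromov products at the vertices above, and then run an induction on the number of edges using thin triangles. A couple of minor constant slips (e.g.\ from $|x-y| \le k\,d + c$ one gets $d \ge (L/2 - c)/k$, not $L/(2k) - c$) do not affect the structure. The one point worth tightening is the induction: as written you only track $d(x_0,x_N)$, but the quasi-geodesic property requires the lower bound for \emph{every} pair $x_i, x_j$, so the inductive statement should be that every sub-chain satisfies the estimate (equivalently, prove a uniform per-edge gain and sum over any sub-interval). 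With that adjustment the argument goes through.
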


\begin{lemma}[Quasi-geodesics are close to geodesics, see \cite{BH} III.H.1.7] \label{lem:close_to_geod}
Let $X$ be a $\delta$-hyperbolic space.  
There exists a constant $R = R(\delta, c, k)$ such that if $\alpha$ is a $(c, k)$ quasi-geodesic segment in $X$, then the image of $\alpha$ lies in the $R$-neighborhood of the geodesic segment joining its endpoints.
\end{lemma}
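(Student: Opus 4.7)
The plan is to reduce to a continuous quasi-geodesic and then run the classical ``exponential divergence'' style argument of Mostow, in the clean form presented in \cite{BH}.

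First I would replace $\alpha \colon [a,b] \to X$ by a continuous, rectifiable $(c',k')$ quasi-geodesic $\alpha'$ with the same endpoints, whose image lies within bounded Hausdorff distance of the image of $\alpha$. This is done by joining $\alpha(i)$ to $\alpha(i+1)$ by geodesic segments on an integer partition of $[a,b]$, reparametrized by arc length; the quasi-isometry constants change only by factors depending on $(c,k)$, and the resulting path is $1$-Lipschitz. Thus it suffices to prove the statement for continuous $\alpha$ that are also $1$-Lipschitz (after adjusting constants).

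Next, let $[p,q]$ be a geodesic between the endpoints of $\alpha$, and let $D := \max_t d(\alpha(t),[p,q])$, attained at some $\alpha(t_0) =: x$. The key idea is to exhibit a long subpath of $\alpha$ trapped near $x$ but far from $[p,q]$, and then use $\delta$-thinness of triangles to derive a contradiction if $D$ is too large. Concretely, choose $t_- < t_0 < t_+$ in the domain so that $\alpha(t_\pm)$ are the first points leaving the closed ball $\bar{B}(x, 2D)$ on either side of $t_0$ (if no such point exists on one side, use the corresponding endpoint of $\alpha$). Let $y_\pm$ denote nearest-point projections of $\alpha(t_\pm)$ onto $[p,q]$. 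The quasi-geodesic inequality applied to the subpath from $\alpha(t_-)$ to $\alpha(t_+)$ gives a lower bound
\[ |t_+ - t_-| \;\geq\; \tfrac{1}{k}\, d(\alpha(t_-),\alpha(t_+)) - c, \]
and combined with the fact that $\alpha$ is $1$-Lipschitz, the length of this subpath is $\Theta(D)$, hence at least of order $D/k - c$.

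On the other hand, $d(\alpha(t_\pm), y_\pm) \leq 3D$ by the triangle inequality and the choice of $t_\pm$, so the subpath from $\alpha(t_-)$ to $\alpha(t_+)$ together with the projection segments to $y_\pm$ forms a loop with $[y_-, y_+] \subset [p,q]$. Now I would apply $\delta$-thinness of geodesic quadrilaterals (a standard consequence of $\delta$-thinness of triangles) to conclude that every point of $[y_-, y_+]$ lies within $2\delta$ of one of the other three sides; since the projections have length at most $3D$ and $x$ lies at distance $D$ from $[p,q]$, this forces $|y_+ - y_-|$ to be bounded below by a positive linear function of $D$ minus $O(\delta)$, while on the other hand the $\delta$-thin condition forces the long subpath of $\alpha$ to repeatedly come within $O(\delta)$ of $[p,q]$, which contradicts it being contained outside $B(x, D)$ once $D$ is large enough compared to a function of $\delta$, $c$, $k$.

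The main obstacle, and the step where one has to be careful, is the last inequality: balancing the $\Theta(D)$ length bound from the quasi-geodesic condition against the at-most-logarithmic (or, in the elementary version, polynomial in $\delta$) bound that $\delta$-thinness imposes on how far a curve can stray from $[p,q]$ before returning. One wants an explicit $R = R(\delta, c, k)$, and the cleanest route is to set up the argument so that ``$D$ large'' yields a strict numerical contradiction; this is exactly the bookkeeping carried out in \cite[III.H.1.7]{BH}, which I would cite for the precise constants rather than reproduce verbatim.
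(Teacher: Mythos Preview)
The paper itself gives no proof of this lemma; it is simply quoted from \cite[III.H.1.7]{BH} as background. So there is nothing to compare your argument against except the Bridson--Haefliger proof you are already invoking, and your overall strategy (tame to a continuous rectifiable path, then run an excursion argument using $\delta$-thinness) is indeed theirs.

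That said, the core step of your sketch has the inequalities reversed. From the quasi-geodesic condition and the fact that $\alpha(t_\pm)\in\partial B(x,2D)$, what you get is an \emph{upper} bound on the length of the subpath, namely $|t_+-t_-|\le k\,d(\alpha(t_-),\alpha(t_+))+c\le 4kD+c$; your displayed inequality gives a lower bound, which is useless here. Conversely, $\delta$-hyperbolicity does not force the subpath to ``repeatedly come within $O(\delta)$ of $[p,q]$''; rather, the key input (this is \cite[III.H.1.6]{BH}) is that any path joining $y_-$ to $y_+$ while staying at distance $\ge D$ from $[p,q]$ must have length at least exponential in $D/\delta$. Comparing the linear upper bound $O_{k,c}(D)$ with this exponential lower bound is what bounds $D$ in terms of $\delta,c,k$. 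Once you swap the roles of the two bounds, your outline becomes exactly the Bridson--Haefliger argument, and citing them for the constants is entirely appropriate here, just as the paper does.
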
 

\noindent It follows from this latter point that, provided a metric space $X$ is $\delta$-hyperbolic, each (oriented) bi-infinite quasi-geodesic $\alpha$ in $X$ has a unique bi-infinite geodesic at bounded distance.  The {\em positive} and {\em negative} endpoints of $\alpha$ are defined to be the 
positive and negative endpoints of this geodesic, denoted $e^+(\alpha)$ and $e^-(\alpha)$, respectively. 
Since quasi-isometries send quasi-geodesics to quasi-geodesics, this means that continuous quasi-isometries of $X$ extend to continuous maps on $\partial_\infty X$.  In particular, when $X = \tilde{M}$, not only deck transformations, but all lifts of homeomorphisms of $M$ to $\tilde{M}$ induce homeomorphisms of $\partial_\infty \tilde{M}$.   

A {\em (unparametrized) quasi-geodesic flow} of a metric space $X$ is a 1-dimensional foliation whose leaves are quasi-geodesics.  The flow is {\em uniform} if there exist $k\geq1, c\geq0$ such that each leaf is a $(c,k)$ quasi-geodesic.  If $\Gamma$ is a group that acts properly discontinuously and cocompactly on a space $X$ and $\F^{QG}$ is a quasi-geodesic foliation such that the action of $\Gamma$ sends leaves to leaves, then local-to-global principle  implies that $\F^{QG}$ is automatically uniform.  

Using Lemma \ref{lem:close_to_geod}, and the definition of the topology on $\partial_\infty X$ described above, one easily attains the following.  

\begin{lemma} \label{lem:fellow_travel}
Let $X$ be $\delta$-hyperbolic and let $\alpha$ be a $(k,c)$ quasi-geodesic ray based at $x_0$.  Given a neighborhood $U$ of $e^+(\alpha) \in \partial_\infty X$, and constant $d>0$, there exists a compact set $K$ such that, if $\beta$ is any $(k,c)$ quasi-geodesic ray that is distance at most $d$ from $\alpha$ on $K$, then $e^+(\beta) \in U$.    
\end{lemma}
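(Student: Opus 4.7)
The plan is to unpack the definition of the topology on $\partial_\infty X$ given in Section~\ref{sec:neg_manifolds} and then pass from the quasi-geodesic fellow-traveling hypothesis to the corresponding geodesic fellow-traveling using Lemma~\ref{lem:close_to_geod} together with standard $\delta$-hyperbolic geometry. First, let $\gamma_\alpha$ denote the geodesic ray from $x_0$ to $e^+(\alpha)$. By Lemma~\ref{lem:close_to_geod} (applied to arbitrarily long subsegments together with an Arzel\`a--Ascoli argument for the limiting ray), $\gamma_\alpha$ lies in the $R$-neighborhood of $\alpha$ for some $R=R(\delta,c,k)$. By the description of the topology on $\partial_\infty X$, shrinking $U$ if necessary, we may fix $d_0>0$ and a radius $r_0>0$ such that every geodesic ray $\gamma$ based at $x_0$ which stays within distance $d_0$ of $\gamma_\alpha$ on the closed ball $\bar{B}(x_0,r_0)$ has $e^+(\gamma)\in U$.

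Next, I would take $K:=\bar{B}(x_0,R_1)$, where $R_1$ is to be chosen much larger than $r_0$, $R$, $d$, and $\delta$. Given a $(c,k)$ quasi-geodesic ray $\beta$ at distance at most $d$ from $\alpha$ on $K$, observe that since $x_0\in \alpha\cap K$, there is a point $y\in\beta$ with $d(y,x_0)\le d$. Let $\gamma_\beta'$ be the geodesic ray from $x_0$ to $e^+(\beta)$, and let $\gamma_\beta''$ be the geodesic ray from $y$ to $e^+(\beta)$; by Lemma~\ref{lem:close_to_geod} applied to the sub-ray of $\beta$ starting at $y$, the ray $\gamma_\beta''$ stays within $R$ of $\beta$. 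Since two geodesic rays representing the same point of $\partial_\infty X$ are asymptotic in a $\delta$-hyperbolic space, one has $\gamma_\beta'$ within $2\delta + d$ of $\gamma_\beta''$ after an initial segment whose length depends only on $d$ and $\delta$. Combining these estimates with the fellow-traveling hypothesis and Step~1, one shows that on $\bar{B}(x_0,r_0)$ the ray $\gamma_\beta'$ lies within $d+2R+2\delta+O(\delta,d)$ of $\gamma_\alpha$, provided $R_1$ was chosen large enough that the relevant portions of $\gamma_\beta'$ and $\gamma_\alpha$ are forced to interact inside $K$. Choosing $R_1$ large enough that this constant is at most $d_0$ places $e^+(\beta)$ in $U$.

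The main obstacle is the bookkeeping in the second paragraph: the rays $\gamma_\alpha$ and $\gamma_\beta'$ share the same basepoint $x_0$, but $\beta$ itself is only guaranteed to be close to $\alpha$, not to pass through $x_0$, and its natural geodesic representative starts at a different point $y_0$. The passage from fellow-traveling of the two quasi-geodesics on $K$ to fellow-traveling of the two geodesic rays from $x_0$ is where all the $\delta$-hyperbolicity — specifically asymptoticity of rays with a common ideal endpoint and thin triangles — is needed, and where the constant $R_1$ must be selected to absorb every error term at once. Once the constants are arranged, the conclusion is immediate from the definition of the topology on $\partial_\infty X$.
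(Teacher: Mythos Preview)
Your approach is exactly what the paper has in mind: the paper gives no detailed proof of this lemma, only the remark that it follows from Lemma~\ref{lem:close_to_geod} together with the description of the topology on $\partial_\infty X$, and your expansion supplies precisely those ingredients.

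One slip to fix in the bookkeeping: the fellow-traveling constant $D = d + 2R + O(\delta)$ between $\gamma_\alpha$ and $\gamma_\beta'$ does \emph{not} shrink as $R_1$ grows, so the clause ``choosing $R_1$ large enough that this constant is at most $d_0$'' is vacuous as written. The correct order is to use the equivalent description of the boundary topology (fix any $d_0$, then exhaust by compacta) to first set $d_0 := D$, then find $r_0$ so that $U_{r_0,d_0}(\gamma_\alpha)\subset U$, and only then take $R_1$ large enough that the portions of $\alpha$, $\beta$, $\gamma_\beta'$, $\gamma_\beta''$ relevant to $\bar B(x_0,r_0)$ all lie inside $K$. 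With the constants ordered this way your argument goes through; this is exactly the ``bookkeeping'' you flagged in your final paragraph.
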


The same evidently holds for $e^-$.  From this, one may derive the fact that endpoint maps are {\em continuous} on the space of $(k,c)$ quasi-geodesics equipped with the compact-open topology, and hence descend to continuous maps on the leaf space of a uniform quasi-geodesic foliation.   The following alternative proof of this fact appears essentially in \cite{Calegari}; we include it as it gives another helpful illustration of the behavior of uniform quasi-geodesics in negative curvature.  

\begin{lemma} \label{lem:endpoint_cont}
Let $\F$ be an oriented, uniform quasi-geodesic foliation of the universal cover of a compact manifold of negative curvature.  Then the endpoint maps, considered as functions on the leaf space of $\F$, are continuous.  
\end{lemma}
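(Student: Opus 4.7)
The plan is to show that for any sequence $[\ell_n] \to [\ell]$ in the leaf space of $\F$, one has $e^+(\ell_n) \to e^+(\ell)$ in $\partial_\infty \tilde{M}$; the analogous statement for $e^-$ follows by reversing orientation. The strategy is to produce convergent basepoints on the leaves, upgrade this to uniform convergence of arc-length parametrizations on compact sets, and then quote Lemma \ref{lem:fellow_travel}.

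First, I would use the foliation-box structure to lift the convergence of leaves to the convergence of points. Fix $p \in \ell$ and choose a shrinking nested family of small, open transversals $T_k$ to $\F$ through $p$, each meeting $\ell$ only at $p$. The saturation of each $T_k$ is open in $\tilde{M}$, so its image in the leaf space is an open neighborhood of $[\ell]$; consequently for large $n$ the leaf $\ell_n$ meets $T_k$, and a diagonal procedure yields $p_n \in \ell_n$ with $p_n \to p$. Parametrize $\ell$ and $\ell_n$ positively (with respect to the orientation of $\F$) by arc length as $\alpha, \alpha_n : \R \to \tilde{M}$ with $\alpha(0)=p$ and $\alpha_n(0) = p_n$. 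These maps are $1$-Lipschitz and, by hypothesis, all $(c,k)$ quasi-geodesics with common constants. Arzel\`a--Ascoli produces a subsequence converging uniformly on compact subsets of $\R$ to a $1$-Lipschitz $(c,k)$ quasi-geodesic $\beta$ with $\beta(0)=p$, since the quasi-geodesic inequalities pass to uniform limits.

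Second, I would identify $\beta$ with $\alpha$. Covering any compact arc $\alpha([-T, T])$ by finitely many foliation charts, one tracks the plaque of $\ell_n$ through $p_n$ across these charts: by local triviality of $\F$, plaques vary continuously in transversal data, so as $p_n \to p$ they converge to the corresponding plaques of $\ell$. Hence each $\beta(t)$ lies in $\ell$, and since $\beta$ and $\alpha$ are both orientation-preserving, arc-length parametrized, and agree at $0$, we get $\beta = \alpha$. Finally, given a neighborhood $U$ of $e^+(\ell)$, apply Lemma \ref{lem:fellow_travel} to the positive ray of $\alpha$ with some fixed $d>0$ to obtain a compact $K$ with the fellow-traveling property. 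For $n$ large along the subsequence, uniform convergence $\alpha_n \to \alpha$ on $K$ forces $e^+(\ell_n) \in U$. Because the limit of any subsequence must be $\alpha$, the conclusion holds for the full sequence.

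The main obstacle is the identification $\beta = \alpha$ in the second step: this is precisely where it matters that $\F$ is an honest foliation of a manifold, rather than just a partition into uniform quasi-geodesics. Without the continuity of plaques over transversals, convergence of basepoints would not be enough to force leaves to stay close on large compact sets, and subsequential limits of the $\alpha_n$ could drift onto other leaves. Chaining plaque continuity through the finitely many foliation charts covering any compact arc of $\ell$ is what converts $p_n \to p$ into the uniform convergence on compacts needed to invoke Lemma \ref{lem:fellow_travel}.
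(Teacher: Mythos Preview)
Your proof is correct and follows what the paper itself signposts as the direct route: immediately before stating the lemma, the authors remark that continuity of the endpoint maps is easily derived from Lemma~\ref{lem:fellow_travel}, and they present their own argument explicitly as an \emph{alternative} proof meant to illustrate the behavior of quasi-geodesics in negative curvature. Your argument is essentially that direct route, worked out carefully via foliation charts and Arzel\`a--Ascoli.

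The paper's proof is genuinely different in flavor. Rather than working with the quasi-geodesic leaves themselves, it straightens each $\ell_n$ to the unique geodesic $\gamma_n$ at bounded Hausdorff distance (via Lemma~\ref{lem:close_to_geod}), observes that the $\gamma_n$ coarsely converge on compact sets, and then invokes the exponential divergence of geodesics in negative curvature to upgrade coarse convergence of the $\gamma_n$ to genuine convergence, hence convergence of their endpoints. This bypasses Lemma~\ref{lem:fellow_travel} entirely but uses a property specific to $\delta$-hyperbolic geometry. Your approach, by contrast, stays with the quasi-geodesic leaves and leans on the local plaque structure of the foliation; it is arguably more elementary and transparently reduces the problem to Lemma~\ref{lem:fellow_travel}. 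One small remark: the assertion that the Arzel\`a--Ascoli limit $\beta$ is itself arc-length parametrized is not quite automatic --- a uniform limit of arc-length curves is only guaranteed to be $1$-Lipschitz --- but this is harmless for your purposes. All you actually need is that $\beta$ is a positively oriented $(c,k)$ quasi-geodesic with image in $\ell$, so that $e^+(\beta) = e^+(\ell)$; Lemma~\ref{lem:fellow_travel} then applies with $\beta$ in the role of the reference ray.
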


\begin{proof}
Suppose that $\ell_n$ is a sequence of leaves of $\F$ that converge uniformly on compact sets to a leaf $\ell_\infty$.  Following the discussion after Lemma \ref{lem:close_to_geod}, there exists $D>0$ (depending on the curvature of $M$ and the quasi-geodesic constants of leaves) such that each $\ell_n$ lies in the $D$-neighborhood of a unique geodesic $\gamma_n$.   It follows that the $\gamma_n$ coarsely converge on compact sets: after passing to a subsequence, we may assume that there is a length $n$ segment of $\gamma_n$ which lies in the $3D$-neighborhood of $\gamma_\infty$.   Since geodesics in negative curvature have exponential divergence\footnote{Recall that a {\em divergence function} for a metric space $X$ is a function $\Delta: \N \to \R$ such that, for any geodesics $c_1, c_2: [0, t] \to X$ with $c_1(0) = c_2(0)$, and any $r, R \in \N$; if $R + r  \leq t$ and $d_X(c_1(R), c_2(R)) > e(0)$ then any path from $c_1(R + r)$ to $c_2(R+r)$ outside the ball $B(c_1(0), R + r)$ must have length at least $\Delta(r)$.    Any $\delta$-hyperbolic space has an exponential divergence function. (See \cite[III.H.1.25]{BH} for a proof).}
 this implies that $\gamma_n$ lies in a $3D e^{-\lambda n}$ neighborhood of $\gamma_\infty$ on a segment of length $n/2$, for some $\lambda>0$.    Thus, the $\gamma_n$ converge and so $e_+(\lambda_n) = e_+(\gamma_n)$ converges to $e_+(\gamma_\infty)$.  
\end{proof}

This concludes the preliminary material required for the proof of Theorem \ref{thm:local_rig}.  Further material on Anosov flows and an introduction to ``slitherings" will be given in Section \ref{sec:skew} where it is needed.

\section{Proof of Theorem \ref{thm:local_rig}}  \label{sec:boundary_rig}

\subsection{Construction of a well-behaved leafwise immersion} \label{sec:leafwise_imm}

The first step in the proof of Theorem \ref{thm:local_rig} is to construct a well behaved map from the suspension bundle $E_{\rho}$ (see Section \ref{sec:suspension}) of a perturbation $\rho$ of the boundary action, to the unit tangent bundle of $M$.   This map will send fibers to fibers, and send leaves of the horizontal foliation on the suspension $E_{\rho}$ to $C^1$ submanifolds such that the tangent distribution of each leaf is $C^0$ close to the distribution given by the weak-stable distribution of the geodesic flow on $M$.

\begin{lemma}   \label{lem:leafwise_imm} 
Let $M$ be a compact negatively curved manifold, and let $\rho_0$ be the boundary action.  There exists a neighborhood $U$ of $\rho_0$ in $\Hom(\pi_1M, \Homeo_+(S^{n-1}))$  and a continuous assignment $\rho \mapsto f_{\rho}$ from $U$ to the space of continuous maps $\tilde{M} \times S^{n-1} \to \UTtM$ with the following properties:

\begin{enumerate}
\item $f_{\rho_0}$ is the identity map,
\item for all $\rho \in U$, the map $f_{\rho}$ covers the identity $\tilde{M} \to \tilde{M}$ mapping fibers to fibers (although it is not required to be injective on any fiber), 
\item the image of each horizontal leaf $\tilde{M} \times \{p\}$ under $f_{\rho}$ is a  $C^1$ submanifold of $\UTtM$,  
\item the map $f_{\rho}$ is $\pi_1 M$-equivariant, so descends to a map $E_{\rho} \to \UT M$, and
\item given $\epsilon > 0$, and $R > 0$, by choosing $U$ sufficiently small we can ensure that the image of any leaf $\tilde{M} \times \{p\}$ under $f_{\rho}$ has tangent distribution $\epsilon$-close (in the $C^0$-sense, from the metric lifted from $M$) to the weak-stable distribution over any ball of radius $R$ in $\tilde{M}$.  Equivalently, by choosing $U$ sufficiently small, we can ensure for each $p$ that the restriction of $f_\rho$ to $B \times \{p\}$, where $B$ is any $R$-ball in $\tilde{M}$, is $\epsilon$-close in the $C^1$-topology to a weak-stable leaf in $\UTtM$.  
\end{enumerate}  
Finally, in the case where $M$ is a surface and $\partial_\infty \tilde{M} = S^1$, we may additionally take $f_\rho$ to be a homeomorphism for all $\rho \in U$.  
\end{lemma}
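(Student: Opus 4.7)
The plan is to build $f_\rho$ by a partition-of-unity averaging in the unit tangent bundle, using the identification $\UTtM \cong \tilde M \times \partial_\infty \tilde M$ via the visual (positive-endpoint) map $\mathrm{vis}_x \colon \partial_\infty \tilde M \to \UT_x \tilde M$. The features of $\mathrm{vis}$ we will exploit are its $\pi_1 M$-equivariance $\mathrm{vis}_{\gamma x} \circ \rho_0(\gamma) = d\gamma(x) \circ \mathrm{vis}_x$ and its $C^1$-dependence on the base point, since $\mathrm{vis}_x(\xi) = -\nabla_x B_\xi$ and Busemann functions are $C^2$ on $\tilde M$. Identifying $S^{n-1}$ with $\partial_\infty \tilde M$ via $\rho_0$, define for each $\gamma \in \pi_1 M$ the \emph{twisted} endpoint map
\[
\phi_\gamma(p) \;:=\; \rho_0(\gamma)\,\rho(\gamma^{-1})(p),
\]
which is the identity when $\rho = \rho_0$ and satisfies the cocycle identity $\phi_{\gamma'\gamma}(\rho(\gamma')p) = \rho_0(\gamma')\,\phi_\gamma(p)$.

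Next, fix a smooth compactly-supported $\tilde\psi \colon \tilde M \to [0,1]$ with $\sum_\gamma \tilde\psi(\gamma^{-1}x) \equiv 1$, set $\tilde\psi_\gamma(x) := \tilde\psi(\gamma^{-1}x)$, and define
\[
f_\rho(x,p) \;:=\; \mathrm{normalize}\Bigl(\textstyle\sum_\gamma \tilde\psi_\gamma(x)\,\mathrm{vis}_x(\phi_\gamma(p))\Bigr) \;\in\; \UT_x\tilde M,
\]
with the sum taken in $T_x \tilde M$ (automatically finite) and ``normalize'' denoting radial projection to the unit sphere, well-defined wherever the sum is nonzero. Properties (1)--(4) then follow directly: at $\rho=\rho_0$ every $\phi_\gamma$ is the identity, so $f_{\rho_0}(x,p) = \mathrm{vis}_x(p)$ recovers the standard identification; $f_\rho$ covers the identity on $\tilde M$ by construction; each leaf image is a $C^1$ graph because $x \mapsto \mathrm{vis}_x(\xi)$ is $C^1$ for each fixed $\xi$; and equivariance is a direct computation: after the substitution $\delta = \gamma'^{-1}\gamma$, the cocycle identity and the visual equivariance pull $d\gamma'(x)$ out of each summand, where it commutes with the linear sum and, being an isometry, with the normalization, giving $f_\rho(\gamma'x,\rho(\gamma')p) = d\gamma'(x)f_\rho(x,p)$. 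Continuity in $\rho$ is immediate from continuity of each $\phi_\gamma$ in $\rho$.

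The real content is property (5) and the fiberwise-homeomorphism statement in the surface case. The difficulty is that compact-open closeness of $\rho$ to $\rho_0$ controls $\rho(\gamma)$ only for $\gamma$ in a prescribed finite set, while at an arbitrary $x \in \tilde M$ the sum defining $f_\rho$ can involve $\gamma$'s of unbounded word-length. The key observation is that, writing $x = \gamma_0 y$ with $y$ in a fixed compact fundamental domain $D$, the $\gamma$'s with $\tilde\psi_\gamma(x) \neq 0$ are exactly the $\gamma_0\eta$ for $\eta$ in a finite set $F_0 \subset \pi_1 M$ depending only on $D$ and $\mathrm{supp}\,\tilde\psi$. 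A short calculation gives
\[
\mathrm{vis}_x(\phi_{\gamma_0 \eta}(p)) \;=\; d\gamma_0(y)\,\mathrm{vis}_y\bigl(\phi_\eta(\rho(\gamma_0^{-1})p)\bigr),
\]
and since $d\gamma_0(y)$ is a linear isometry the distances between these vectors in $\UT_x \tilde M$ equal the visual distances at $y \in D$ between $\phi_\eta(q)$ (for $\eta \in F_0$, $q = \rho(\gamma_0^{-1})(p) \in S^{n-1}$). Choosing $\rho$ close to $\rho_0$ on $F_0$ makes every $\phi_\eta$ uniformly $C^0$-close to the identity on $S^{n-1}$, so the averaged vectors cluster tightly near $\mathrm{vis}_x(\phi_{\gamma_0}(p))$, which is the value at $x$ of the genuine weak-stable section for the boundary point $\phi_{\gamma_0}(p)$; $C^0$-closeness on any $R$-ball follows, and $C^1$-closeness follows from the uniform $C^1$-continuity of $\xi \mapsto -\nabla B_\xi$ over compact subsets of $\tilde M$. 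In the surface case $n=2$ the averaged unit vectors all lie in a short arc of the circle fiber $\UT_x\tilde M$, so normalization preserves cyclic order in $p$ and each fiber map is an orientation-preserving degree-one circle homeomorphism, upgrading $f_\rho$ to a global homeomorphism. The main obstacle is precisely this final estimate: one cannot hope that $\phi_{\gamma_0}(p)$ stays close to $p$ globally (that is essentially the conclusion of the main theorem), but measuring everything in the visual metric at $x$, the isometry $d\gamma_0(y)$ reduces every relevant bound to a controlled one at a point of the compact fundamental domain.
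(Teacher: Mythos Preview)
Your construction is essentially the same as the paper's: both build $f_\rho$ by averaging, via a $\pi_1 M$-equivariant partition of unity, the weak-stable sections $z \mapsto \mathrm{vis}_z(\phi_\gamma(p))$ in the tangent space and then normalizing.  The paper indexes its partition of unity by the vertices of a triangulation (open stars), whereas you translate a single bump function; unwinding the paper's definition of $F_v$ shows that their summand attached to a vertex $gv_0$ is exactly your $\mathrm{vis}_z(\phi_g(p))$, so the two averages differ only in the choice of weights.

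The one place where your argument diverges in substance is the surface case.  The paper avoids normalization there entirely: it passes to the universal cover $\widetilde{\UT M}$, an $\mathbb{R}$-bundle, and averages additively in the fiber, which makes strict monotonicity in $p$ immediate (a convex combination of strictly increasing real functions is strictly increasing).  Your claim that normalization of unit vectors in a short arc ``preserves cyclic order in $p$'' is true but not as automatic as you suggest: writing the summands as $e^{i\theta_\eta(p)}$ with $|\theta_\eta - \theta_0| < \pi/2$, one must check that $\partial_{\theta_j}\arg\bigl(\sum \lambda_\eta e^{i\theta_\eta}\bigr) = \lambda_j \cos(\theta_j - \Phi)/R > 0$, so that the argument is strictly increasing in each coordinate and hence in $p$.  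This is a short calculation, but it is the step a careful reader would ask you to supply; the paper's lift-to-$\mathbb{R}$ trick sidesteps it.
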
 

\begin{remark} \label{rem:general_bundle}
The construction in the proof generalizes to other foliated bundles than $\UT(M)$.  What we use in the construction is compactness of $M$, the linear structure of the tangent bundle, the $C^{1,0+}$ regularity of the weak-stable foliation on $\UT(M)$, and the fact that $\rho$ is a $C^0$-small perturbation of the holonomy of this bundle.  
\end{remark} 

\begin{proof}[Proof of Lemma \ref{lem:leafwise_imm}]
Let $\tau$ be a smooth triangulation of $M$, and let $\tilde{\tau}$ denote its lift to $\tilde{M}$.
For the proof, we define $f_{\rho}$ first on the spheres over the vertices of $\tau$, then extend using a partition of unity to ``interpolate'' between vertices.  

\noindent \textbf{Set-up. } Let $O_v$ denote the open star of a vertex $v \in \tau$.  The set $\mathcal{O} : = \{ O_v \mid v \text{ vertex of } \tau \}$ is an open cover of $M$ whose nerve agrees with $\tau$.    Let $\tilde{\mathcal{O}}$ be the lift of $\mathcal{O}$ to $\tilde{M}$.  Take a partition of unity $\{ \sigma_v : v \text { vertex of } \tau \}$ subordinate to $\mathcal{O}$ and pull this back to a $\pi_1 M$-equivariant partition of unity $\{ \tilde{\sigma}_v : v \text { vertex of }  \tilde{\tau} \}$ on $\tilde{M}$ subordinate to $\tilde{\mathcal{O}}$.  \medskip 

\noindent \textbf{Step 1: Define $f$ over vertices of $\tilde \tau$.}
We recall first the structure available to us.  
$\UTtM$ is a sphere bundle over $\tilde{M}$ with a natural action of $\pi_1 M$ by diffeomorphisms, as well as a linear (vector bundle) structure coming from the Riemannian metric on $M$.  
The lift of the weak-stable foliation $\tilde{\F}^s$ is transverse to the $S^{n-1}$ fibers of $\UTtM$ and invariant under $\pi_1 M$, so gives $\UT M$ the structure of a foliated bundle, and the holonomy of this foliation is the boundary action $\rho$.

Let $D \subseteq \widetilde{M}$ be a connected fundamental domain for the $\pi_1 M$-action.   Fix a basepoint $x \in D$ that is a vertex in $\tilde \tau$ and let $S_x$ denote the fiber over $x$ in $\UTtM$.  For a perturbation $\rho$ of $\rho_0$, we may build a foliated bundle $E_{\rho}$ with holonomy $\rho$ by taking the quotient of $\tilde{M} \times S_x \cong \tilde{M} \times S^{n-1}$ by the diagonal action of $\pi_1 M$ by deck transformations of $\tilde{M}$ and the action $\rho$ on $S_x \cong S^{n-1}$.  
Define $f_{\rho}$ on $ \{x\} \times S_x$ to agree with the identity map $S_x \to S_x$.  

Now we define $f_\rho$ over other vertices.  
For a vertex $v$ of $\tilde\tau$, and $S_v$ the fiber over $v$ in $\UTtM$, let $\pi_v: S_v \to S_x$ denote the homeomorphism obtained by sending a point $y \in S_v$ to the unique point of $S_x$ lying in the same leaf of $\tilde{F}^s$ as $y$.   
Now for each vertex $v$ of $\tilde\tau$ in $D$, define $f_{\rho}$ on $\{v \} \times S^{n-1}$ by $f_{\rho}(v, p) = S_v^{-1}f_{\rho}(x, p)$.  
In other words, we define $f_{\rho}$ on the fibers over vertices in $D$ so that it takes points on the same leaf of the horizontal foliation on $\tilde{M} \times S^{n-1}$ to points on the same leaf of $\tilde{F}^s$.  

There is a unique $\pi_1 M$-equivariant extension of this map to one defined on the union of fibers over all vertices of $\tilde{\tau}$.  
Concretely this is given as follows:  For a vertex $v \in D$, an element $g \in \pi_1 M$, and a point $(gv, p)$ in $\{gv\} \times S^{n-1}$, define 
\[ f_{\rho}(gv, p) =  \left(gv, \pi_{gv}^{-1} \rho_0(g)\rho(g)^{-1} (p) \right) \]
i.e., it is the unique point on the fiber $S_{gv}$ contained in the same leaf of $\tilde{F}^s$ as the point $\rho_0(g)\rho(g)^{-1}(p) \in S_x$.   

\begin{remark*}
Note that, given any finite set of elements of $g$, and any $\epsilon >0$, we can choose a neighborhood $U$ of $\rho_0$ sufficiently small so that $\rho_0(g)\rho(g)^{-1}(p)$ lies within distance $\epsilon$ of $p$.  
\end{remark*}

\noindent \textbf{Step 2: Extend over $\tilde{M} \times S^{n-1}$.}
For a vertex $v$ of $\tilde{\tau}$, let $O_v$ denote its open star, and let $F_v: O_v \times S^{n-1} \to \UTtM$ be the map that covers the identity on $O_v \subset \tilde{M}$, agrees with $f_{\rho}$ on $\{v\} \times S^{n-1}$, and sends horizontal leaves $O_v \times \{p\}$ to leaves of $\F^s$.  

Using the linear structure on $\UTtM$, we now extend $f_{\rho}$ to be defined everywhere by using the partition of unity to interpolate between the maps $F_v$.  In detail, on a horizontal leaf $\tilde{M} \times \{p\} \subset \tilde{M} \times S^{n-1}$ define for each point $(z, p)$ a (not necessarily unit) tangent vector to $\tilde M$ at $z$ by
\[ V(z,p) := \sum_{v \text { vertex of } \tilde\tau} \widetilde{\sigma}_v(z) F_v(z, p). \]
Note that this is a finite sum involving only vertices of $\tau$ whose star contains $z$.  The remark above together with the fact that $F_v$ is $\pi_1 M$-equivariant ensures that if $\rho$ is chosen sufficiently close to $\rho_0$, then $F_v(z, p)$ is never zero.  
Thus, we may define 
\[ f_{\rho}(z, p) : = \frac{V(z,p)}{|| V(z,p) ||} \in \UTtM. \]
Since $\tilde{\sigma}$ is smooth and $\mathcal{F}^s$ is a $C^{1,0+}$ foliation, for fixed $p$ the maps $V(z,p)$, and hence also the maps $f_{\rho}(z, p)$ are of class $C^1$. Moreover, these maps vary continuously in $z$ in the $C^1$ topology and as all functions in the definition are $\pi_1M$-equivariant, so is $f_{\rho}$.  

Continuity in $\rho$ also follows readily from the definition. In fact, this map is continuous at the point $\rho_0$ with respect to leafwise uniform convergence in the $C^1$ topology on compact sets of uniform size.  For if $\rho_n \to \rho_0$, then the fact that $\tilde{F}^s$ is of class $C^{1,0+}$ implies that all the summands in the definition of $V(z,p)$ will $C^1$-converge uniformly on any fixed compact set.   By $\pi_1 M$--equivariance we then get uniform converge on balls of a fixed radius about any point. 

Finally, in the case where $\dim(M) = 2$, and $\rho$ is an action on the circle, one can avoid the normalisation of $V(z,p)$ in the definition of $f_\rho$ and make a separate argument to produce a map which descends to a {\em homeomorphism} $E_\rho \to \UT M$, as follows.  Consider the universal cover $\widetilde{\UT M}$, which is an $\R$-bundle over $\tilde{M}$ and a fiberwise cover of $\UTtM$.   A perturbation $\rho$ of $\rho_0$ gives rise to a unique perturbation of the holonomy of $\widetilde{\UT M}$ in the group $\Homeo_\Z(\R)$ of homeomorphisms of the fiber $\R$ that commute with the covering map $\R \to S^1 \cong \R/\Z$. Repeating the same construction as above to define $F_v$ using the lifted actions, we can then average the maps $F_v$ using the partition of unity $\tilde{\sigma}_v$ and the natural Lie group structure on the fiber $\R$ lifted from $S^1$.   The resulting map $\tilde{M} \times \R \to \widetilde{\UT M}$ will additionally commute with the fiberwise covering map, and descend to a map $f_\rho: \tilde{M} \times S^1 \to \UTtM$ with the required properties (1)-(5).  
The fact that points on $\R$ are totally ordered, and that orientation-preserving homeomorphisms and our averaging trick are all order-preserving means that, with this construction, $f_{\rho}$ will be bijective.  It is easy to verify that its inverse is also continuous, hence $f_{\rho}$ is $\pi_1 M$ equivariant homeomorphism.  
\end{proof}

\begin{figure}
  \labellist 
  \hair 2pt
   \pinlabel $S^{n-1}$ at 25 260
     \pinlabel $L$ at 215 160
 \pinlabel $\ell$ at 100 145
    \pinlabel $\tilde M$ at 215 50
       \pinlabel $f_\rho$ at 280 130
       \pinlabel $\longrightarrow$ at 280 110
         \pinlabel $f_\rho(L)$ at 575 160
    \pinlabel $\UTtM$ at 440 15
       \pinlabel $L^u(\xi)$ at 470 240
   \endlabellist
   \centerline{ \mbox{
 \includegraphics[width = 4in]{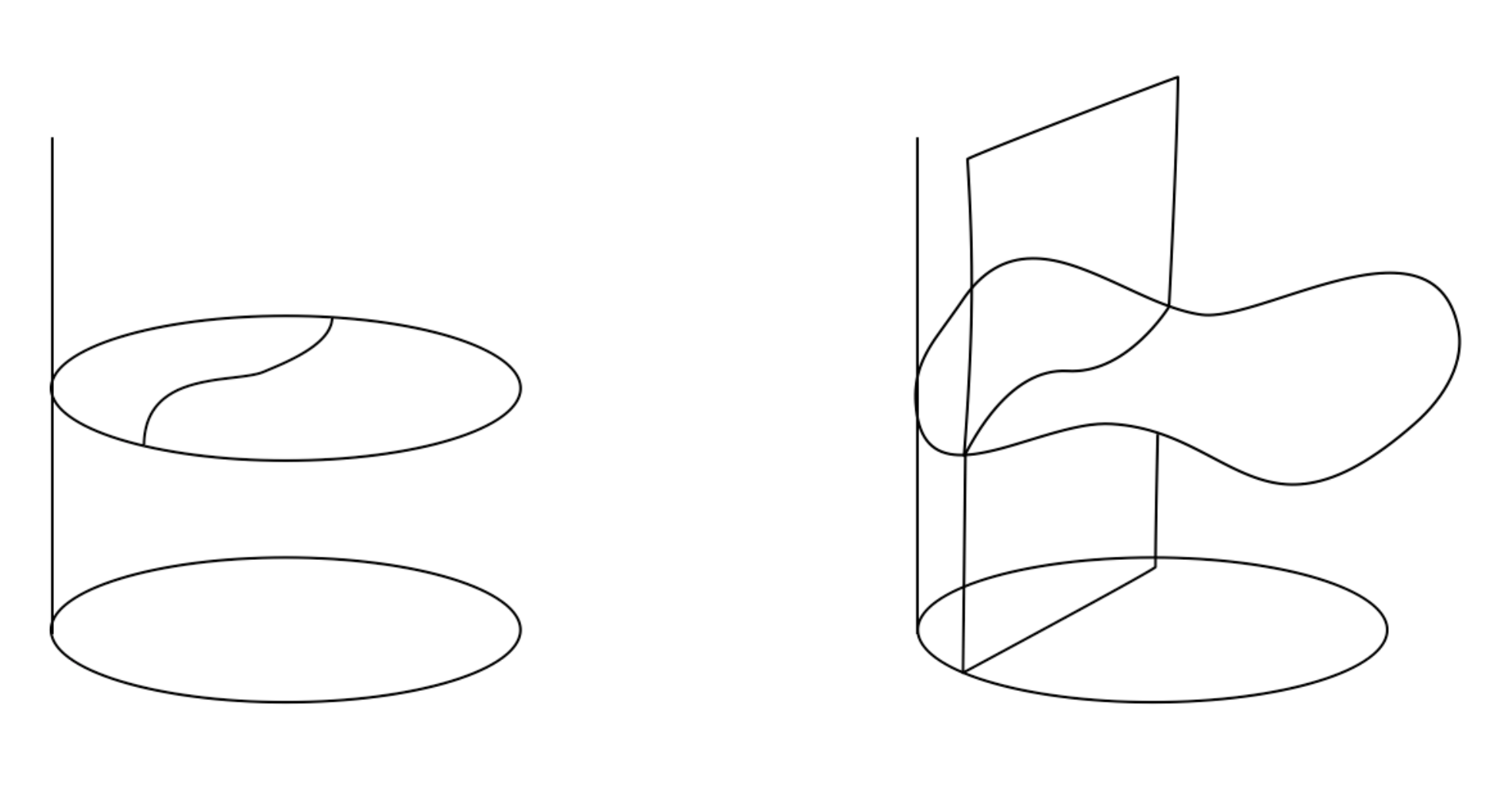}}}
 \caption{The map $f_\rho$ and a leaf $\ell$ of $\F^{QG}_\rho$}
  \label{f_rho}
\end{figure}

\subsection{Quasi-geodesics and endpoint maps} \label{sec:endpoint_maps}
Keeping the notation from Section \ref{sec:leafwise_imm}, $M$ denotes a compact, negatively curved Riemannian $n$-manifold, $\UTtM$ the unit tangent bundle of $\tilde{M}$, and $\rho_0$ denotes the standard boundary action of $\pi_1M$ on $S^{n-1}$.   

Let $f_\rho: \tilde{M} \times S^{n-1} \to \UTtM$ denote the $\pi_1M$-equivariant map obtained by applying Lemma \ref{lem:leafwise_imm} to a representation $\rho$ close to $\rho_0$ in $\Hom(\pi_1M, \Homeo_+(S^{n-1}))$. 
Our next goal is to use this data to produce a quasi-geodesic foliation on $\tilde{M} \times S^{n-1}$ that is $\rho$-equivariant, so descends to a foliation on the suspension $E_\rho$.  
When speaking of endpoints of geodesics, we use the fact that the canonical projection $\pi: \UTtM \to \tilde{M}$ is a quasi-isometry, giving us an identification of the Gromov boundary of $\UTtM$ with $\partial_\infty \tilde{M}$.  We use the notation from Section \ref{sec:flow}, for example $L^u(\xi)$ denotes the unstable leaf consisting of geodesics with negative endpoint $\xi$.  

\begin{lemma} \label{lem:qg_line}
If $\rho$ is sufficiently close to $\rho_0$, then for any horizontal leaf $L = \tilde{M} \times \{p\}$ of $\tilde{M} \times S^{n-1}$, and any unstable leaf 
$L^u(\xi)$ in $\UTtM$, 
the intersection $f_\rho(L) \cap L^u(\xi)$  is either empty or a connected, quasi-geodesically embedded bi-infinite line in $\UTtM$ with one endpoint equal to $\xi$. 
\end{lemma}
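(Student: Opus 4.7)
My plan is to leverage that both $f_\rho(L)$ and $L^u(\xi)$ become graphs over $\tilde M$ under the positive-endpoint identification $\UTtM \cong \tilde M \times S^{n-1}$, and then analyze their intersection via a fixed-point equation. Under this identification $L^s(q) = \tilde M \times \{q\}$, and the weak unstable leaf $L^u(\xi)$ is the graph of the smooth submersion $P_\xi(x) := e^+(\gamma_{\xi,x})$, where $\gamma_{\xi, x}$ denotes the geodesic from $\xi$ through $x$. Because $f_\rho$ covers the identity on $\tilde M$ (Lemma \ref{lem:leafwise_imm}(2)), the image $f_\rho(L) = f_\rho(\tilde M \times \{p\})$ has the form $\{(x, G_p(x)) : x \in \tilde M\}$ for some $C^1$ map $G_p : \tilde M \to S^{n-1}$, whose derivative is pointwise as small as desired on balls of arbitrary fixed radius by Lemma \ref{lem:leafwise_imm}(5). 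Thus $\pi$ identifies $f_\rho(L) \cap L^u(\xi)$ with $S := \{x : G_p(x) = P_\xi(x)\} \subset \tilde M$.

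Granting this set-up, the 1-submanifold and quasi-geodesic assertions would follow from a standard transversality-and-ODE argument. Since $TL^s$ and $TL^u(\xi)$ meet transversally everywhere, precisely along the geodesic spray direction $X$, and $Tf_\rho(L)$ is $C^0$-close to $TL^s$ on large balls, for $\rho$ sufficiently close to $\rho_0$ the submanifolds $f_\rho(L)$ and $L^u(\xi)$ meet transversally; hence $S$ is a $C^1$ 1-manifold whose tangent is close to $d\pi(X)$. Parametrizing a component $\alpha$ by arc length in $\UTtM$, one has $\alpha'(s) = \pm X(\alpha(s)) + O(\epsilon)$, so a Gronwall comparison with the geodesic flow shows $\alpha$ lies $O(\epsilon e^{CR})$-close to an honest geodesic on any length-$R$ subsegment. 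Fixing $R$ large relative to the $\delta$-hyperbolicity constant and taking $\epsilon$ small makes each length-$R$ subsegment of $\alpha$ a uniform $(c,k)$ quasi-geodesic, and Lemma \ref{lem:local_to_global} then upgrades this to global quasi-geodesic constants for $\alpha$. A circle component is ruled out because quasi-geodesics have infinite image, so each component is a bi-infinite line. For the endpoint assertion, note that every $\alpha(s) \in L^u(\xi)$ is a unit vector pointing away from $\xi$; the tangent estimate shows $\pi \circ \alpha$ recedes toward $\xi$ in the backward direction of $s$, so one endpoint of $\alpha$ is $\xi$.

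The main obstacle is connectedness. My approach would be to use the Busemann coordinates $(B_\xi, P_\xi) : \tilde M \to \R \times (S^{n-1} \setminus \{\xi\})$ to identify $\tilde M$ with a product under which $P_\xi$ becomes projection to the second factor and the unperturbed intersection $\gamma_0$ (the geodesic from $\xi$ to $p$) becomes the vertical line $\R \times \{p\}$. The defining equation for $S$ then reads $y = G_p(t, y)$; within a uniform horospherical tube around $\gamma_0$, $G_p(t, \cdot)$ is a small perturbation of the constant map $p$, and the implicit function theorem applied slicewise in $t$ produces a unique solution $y(t)$ varying $C^1$ with $t$, giving a single connected curve inside the tube. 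Ruling out additional components outside the tube is the delicate point: I would combine the continuous dependence of $S_\rho$ on $\rho$ (so that $S_\rho$ deforms continuously from $S_{\rho_0} = \gamma_0$) with the already-established fact that any bi-infinite component is a quasi-geodesic with one endpoint $\xi$ and the other necessarily close to $p$ for $\rho$ near $\rho_0$, forcing any extra component to enter the same tube and coincide with the one produced above.
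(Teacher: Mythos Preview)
Your transversality and quasi-geodesic arguments are essentially sound, though more elaborate than necessary. The paper obtains the quasi-geodesic estimate in one stroke: since $Tf_\rho(L)$ is uniformly close to the weak-stable distribution, the intersection curve meets horospheres centered at $\xi$ (level sets of the Busemann function $B_\xi$) at angle uniformly close to $\pi/2$, so its arclength between $B_\xi^{-1}(s)$ and $B_\xi^{-1}(t)$ is at most $C|t-s|$. This avoids the Gronwall comparison entirely and simultaneously makes the ``endpoint equals $\xi$'' assertion immediate. Your approach via $\alpha'(s) = \pm X(\alpha(s)) + O(\epsilon)$ and local-to-global works too, but note that $P_\xi$ is in general only $C^1$, not smooth.

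The genuine gap is in your connectedness argument. You assert that any additional component, being a quasi-geodesic with one endpoint $\xi$, has its other endpoint ``necessarily close to $p$ for $\rho$ near $\rho_0$,'' and hence must enter your tube. This is unjustified. Property (5) of Lemma~\ref{lem:leafwise_imm} says $f_\rho(L)$ is $C^1$-close to \emph{some} weak-stable leaf on each $R$-ball, not that it is globally $C^0$-close to $L^s(p)$; the leaf it approximates can and does drift as one moves around $\tilde M$. So a hypothetical second component could have positive endpoint far from $p$, and your implicit-function tube around the geodesic $(\xi,p)$ says nothing about it. Likewise, ``continuous dependence of $S_\rho$ on $\rho$'' holds only in the compact-open sense and does not preclude components entering from infinity.

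The paper's remedy is a genuinely different idea: use cocompactness of the $\pi_1 M$-action on ordered distinct triples of $\partial_\infty\tilde M$. One first fixes $R$ so that, over the ball $B_R(x)$ around a basepoint, $f_\rho(L)\cap L^u(\xi)$ is empty or a single segment for every $L,\xi$ (this follows from property (5) on a single compact region). If two components existed with positive endpoints $\alpha,\beta$, the triple $(\xi,\alpha,\beta)$ can be translated by some $g\in\pi_1 M$ so that both quasi-geodesics (between $g\xi$ and $g\alpha$, and between $g\xi$ and $g\beta$) pass through $B_R(x)$. By $\pi_1 M$-equivariance of $f_\rho$, this produces two distinct segments of $f_\rho(gL)\cap L^u(g\xi)$ inside $B_R(x)$, contradicting local uniqueness. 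This is the missing ingredient in your argument.
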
 

Note that the intersection may indeed be empty, for instance, when $\rho = \rho_0$, the leaf $L^u(\xi)$ has empty intersection with the stable leaf $L^s(\xi)$ comprised of geodesics with forward endpoint $\xi$.  

\begin{proof}[Proof of Lemma \ref{lem:qg_line}]
Let $U$ be a small neighborhood of $\rho_0$.  
Consider the horosphere foliation $\mathcal{B}$ of $\UTtM$ whose leaves are level sets of the Busemann function $b$ of a geodesic with forward endpoint $\xi$ (see Section \ref{sec:neg_manifolds}).  
 For any leaf $L^s(\zeta)$ of the stable foliation of geodesic flow on $\UTtM$ intersecting $L^u(\xi)$, the geodesic leaf $\ell = L^u(\xi) \cap L^s(\zeta)$ is perpendicular to leaves of $\mathcal{B}$.  Since, for any leaf $L = \tilde{M} \times \{p\}$, the tangent distribution of $f_\rho(L)$ is uniformly $C^0$ close to the stable distribution (Property (5) in Lemma \ref{lem:leafwise_imm}), it follows that $L^u(\xi)  \cap f_\rho(L)$ meets leaves of $\mathcal{B}$ at angle uniformly close to $\pi/2$.  Thus, the length of the segment of a leaf of $L^u(\xi) \cap  f_\rho(L)$ between $b^{-1}(t)$ and $b^{-1}(s)$ is at most $C|t-s|$ for some constant $C>1$, which we may take to be uniform over all leaves, and this constant will only be decreased by further shrinking the neighborhood $U$ of $\rho_0$.    This shows that connected components of leaves $\ell$ are uniform quasi-geodesics.  

To show the sets $L_\xi \cap  f_\rho(L)$ are either empty or a {\em single} quasi-geodesic line, we use the fact that $\pi_1 M$ acts cocompactly on the space of distinct triples of $\partial_\infty \tilde{M}$.   As in the previous section, let $x$ be a basepoint in $\tilde{M}$.  Using cocompactness, and the fact that quasi-geodesics fellow-travel geodesics in $\tilde{M}$, we may choose $R$ large enough so that, for any distinct triple of boundary points $(\xi_1, \xi_2, \xi_3)$, there exists $g \in \pi_1M$ so that any quasi-geodesic in $\UTtM$ with constant $C$ (as above) between any pair $g(\xi_i)$ and $g(\xi_j)$ has projection to $\tilde{M}$ which passes through the ball $B_R(x)$ of radius $R$ about $x$.    

Our construction of $f_\rho$ means that, provided that $\rho$ is close enough to $\rho_0$, the image of any leaf $\tilde{M} \times \{p\}$ under $f$ will be uniformly $C^1$ close to the stable leaf through $p$ over the ball $B_{2R}(x)$.  Thus if $U$ is chosen sufficiently small, then for any leaf $L$ the projection of $L \cap L^u(\xi)$ intersected with $B_R(x)$ will either be empty or a single, connected quasi-geodesic segment.  
Suppose now for contradiction that there exist leaves $L$ and $L_\xi$ such that $L^u(\xi) \cap f_\rho(L)$ is nonempty and not connected. Let $\alpha \neq \xi$ and $\beta \neq \xi$ be endpoints of two distinct (quasi-geodesic) connected components of $L^u(\xi) \cap f_\rho(L)$.   Using our choice of $R$, find $g$ so that the quasi-geodesics components of $g (L^u(\xi) \cap f_\rho(L))$ with pairs of endpoints ($g \alpha$, $g \xi$)  and  $(g \beta, g \xi)$, respectively, satisfy the property that their projections to $\tilde{M}$ pass through $B_R(x)$.  (We allow the possibility that $\alpha = \beta$.)  But then $L^u(g \xi) \cap f_\rho(g L)$ intersects $B_R(x)$ along two {\em distinct} quasi-geodesic segments, giving the desired contradiction.  
\end{proof} 

Thus, after endowing $f_\rho(L)$ with either its induced metric or that pulled back from the projection $\pi: \UTtM \to \tilde{M}$, the sets $L^u(\xi) \cap f_\rho(L)$ (fixing $L$ and varying $\xi$) give a quasi-geodesic foliation of $f_\rho(L)$. 
We make the following orientation convention; when $\rho = \rho_0$, this exactly recovers the oriented geodesics from $\UTtM$.  

\begin{convention}[Orientation on leaves]
We orient the lines of the form $f_\rho(L) \cap L^u(\xi)$ so that their negative endpoint is $\xi$.
\end{convention}

Since $f_\rho$ covers the identity on $M$ and is $C^1$ on leaves, its restriction to each leaf $L$ is a quasi-isometry.  Thus, we may pull back the oriented quasi-geodesic foliation on each leaf $f_\rho(L)$ via the restriction of $f_\rho$ to $L$, and obtain an oriented quasi-geodesic foliation on $L$.  Doing this on all leaves gives an oriented, quasi-geodesic foliation on $\tilde{M} \times S^{n-1}$, which we denote by $\Fqg_\rho$.  Again, $\pi_1 M$-equivariance means that the quasi-geodesic constants may be taken to be uniform.    See Figure \ref{f_rho} for an illustration.  

\subsection*{Properties of $\Fqg_\rho$}
The fact that $f_\rho$ is $\pi_1M$-equivariant and that $\F^u$ is a $\pi_1M$-invariant foliation on $\UTtM$ means that the diagonal action of $\pi_1 M$ on $\tilde M \times S^{n-1}$ via deck transformations on the first factor and $\rho$ on the second permutes the leaves of $\Fqg_\rho$. 
Furthermore, $\Fqg_\rho$ has the property that each quasi-geodesic line $\ell$ in the foliation is contained in a horizontal leaf of $\tilde{M} \times \{p\}$ of $\tilde{M} \times S^{n-1}$.   Thus, such a line $\ell$ has a positive and negative endpoint on the boundary sphere $\partial_\infty \tilde{M} = S^{n-1}$, giving positive and negative \emph{endpoint maps} 
\[e_\rho^+,  e_\rho^-: \tilde{M} \times S^{n-1} \to \partial_\infty \tilde{M}  =S^{n-1}\]
which assign to a point $x$ in a leaf $\ell$ the positive and negative endpoints $e_\rho^+(x)$ and $e_\rho^-(x)$ of $\ell$, respectively.   Note that, since $f_\rho$ covers the identity map on $\tilde{M}$, one may equally well look at the sets $f_\rho(L) \cap L^u(\xi)$ or their pullbacks under $f_\rho$ to determine their endpoints.
When $\rho = \rho_0$, the foliation $\Fqg_\rho$ is the geodesic foliation of $\UTtM$, and $e_{\rho_0}^+$ and $e_{\rho_0}^-$ are the usual positive and negative endpoint maps.

Let $e_\rho^\pm$ denote the product map $(e^+, e^-)$ to $S^{n-1} \times S^{n-1}$.  The image of this map avoids the diagonal $\Delta$.  By definition, this map factors through the projection to the leaf space $\mathcal{L}(\Fqg_\rho)$ of $\Fqg_\rho$ as summarized in the diagram below.  
\[
  \begin{tikzcd}
    \tilde{M} \times S^{n-1} \arrow{d} \arrow{r}{e_\rho^\pm} & (S^{n-1} \times S^{n-1}) - \Delta \\
    \mathcal{L}(\Fqg_\rho) \arrow[swap]{ur}{\overline{e}_\rho}
  \end{tikzcd}
\]
Additionally, since $f_\rho$ is $\pi_1M$-equivariant, a straightforward verification from the definition shows that the same is true of $e^\pm_\rho$, namely 
\begin{equation} \label{eq:equivariance}
e^\pm_\rho (\gamma\cdot x, \rho(\gamma)(y)) = \gamma \cdot e^\pm_\rho(x,y) 
\end{equation} 
holds for all $(x,y) \in \tilde{M} \times S^{n-1}$ and $\gamma \in \pi_1M$, where the action on the right hand side of the equation is by the standard action of $\pi_1M$ on unparametrized geodesics in $\UTtM$,  i.e. the diagonal action of $\rho_0$ on boundary points.  

We now prove various continuity properties.

\begin{lemma} \label{lem:cont_1}
The endpoint maps $e^\pm_\rho$ and $\overline{e}_\rho$ are continuous.
\end{lemma}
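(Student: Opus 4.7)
The plan is to first reduce to continuity of $e_\rho^\pm$: by construction $e_\rho^\pm$ is constant on leaves of $\F^{QG}_\rho$, so it factors through the quotient to $\mathcal{L}(\F^{QG}_\rho)$ as $\overline{e}_\rho$. Endowing $\mathcal{L}(\F^{QG}_\rho)$ with the quotient topology, the universal property of quotient maps reduces continuity of $\overline{e}_\rho$ to that of $e_\rho^\pm$.

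To prove continuity of $e_\rho^\pm$, I would take a convergent sequence $(z_n, p_n) \to (z_\infty, p_\infty)$ in $\tilde{M} \times S^{n-1}$, let $\ell_n \subset \tilde{M} \times \{p_n\}$ denote the leaf of $\F^{QG}_\rho$ through $(z_n, p_n)$, and show that the projections $\alpha_n$ of $\ell_n$ to $\tilde M$ converge uniformly on compact sets to $\alpha_\infty$ as (uniform) quasi-geodesics in $\tilde M$. Once this is established, Lemma \ref{lem:endpoint_cont}, applied to the $\pi_1M$-invariant uniform quasi-geodesic foliation of $\tilde M$ obtained as the image of $\F^{QG}_\rho$ under $\pi\circ f_\rho$, yields $e^\pm(\alpha_n) \to e^\pm(\alpha_\infty)$; by our orientation convention this is exactly $e_\rho^\pm(z_n,p_n) \to e_\rho^\pm(z_\infty,p_\infty)$.

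To prove the convergence $\alpha_n \to \alpha_\infty$, I would work in $\UTtM$ using the defining description of $\ell_n$ as the $f_\rho$-preimage of $f_\rho(L_n) \cap L^u(\xi_n)$, where $L_n = \tilde M\times\{p_n\}$ and $\xi_n = e_\rho^-(z_n,p_n)$. Two ingredients combine: (a) continuity of $f_\rho$ gives $f_\rho(z_n,p_n) \to f_\rho(z_\infty,p_\infty)$, and since $\F^u$ is a $C^{1,0+}$ foliation, the unstable leaves $L^u(\xi_n)$ through these points converge uniformly on compact sets in the $C^1$ topology to $L^u(\xi_\infty)$; (b) by property (5) of Lemma \ref{lem:leafwise_imm}, each $f_\rho(L_n)$ is uniformly $C^1$-close on any fixed compact set to a weak stable leaf and is therefore uniformly transverse to $\F^u$, and moreover the construction in Lemma \ref{lem:leafwise_imm} shows that $f_\rho|_{L_n}$ varies continuously in $p_n$ in the $C^1$ topology on compact sets. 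By a standard transversality argument, the intersections $f_\rho(L_n)\cap L^u(\xi_n)$ then converge uniformly on compact sets to $f_\rho(L_\infty)\cap L^u(\xi_\infty)$. Composing with the quasi-isometry $\pi: \UTtM\to\tilde M$ gives the desired convergence of $\alpha_n$ to $\alpha_\infty$.

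The main obstacle I anticipate is cleanly formulating the joint continuity in $(z,p)$ needed in (b): Lemma \ref{lem:leafwise_imm} gives continuity of $f_\rho$ in the representation $\rho$ through a fixed leaf, but here $\rho$ is fixed and we need continuity across horizontal leaves within a single suspension. This is already implicit in the partition-of-unity construction (the maps $F_v$ are continuous in $(z,p)$ with uniform $C^1$-control on each horizontal slice), but pinning down the statement in the form needed -- uniform $C^1$-closeness of $f_\rho(L_n)$ to $f_\rho(L_\infty)$ on compact sets as $p_n \to p_\infty$, together with uniform transversality to $\F^u$ -- is the one technical point requiring care. Once that is in hand, everything else is a routine application of the quasi-geodesic machinery already assembled in Section \ref{sec:prelim}.
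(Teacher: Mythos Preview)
Your proposal is correct and follows essentially the same approach as the paper's proof: reduce to continuity of $e_\rho^\pm$ via the quotient, take a convergent sequence, argue that the $C^1$ convergence of $f_\rho|_{L_n}$ on compact sets together with convergence of the unstable leaves through $f_\rho(z_n,p_n)$ forces the quasi-geodesic intersections to converge on compact sets, and conclude via fellow-traveling. The technical point you flag as the ``main obstacle'' (joint $C^1$ continuity of $f_\rho$ across horizontal leaves) is exactly what the paper spells out, and you are right that it is implicit in the partition-of-unity construction; the paper simply states it explicitly and then invokes Lemma~\ref{lem:fellow_travel} rather than Lemma~\ref{lem:endpoint_cont}, which amounts to the same thing. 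One small caution: the image of $\F^{QG}_\rho$ under $\pi\circ f_\rho$ is not literally a foliation of $\tilde M$ (distinct horizontal leaves can project to overlapping curves), so you should invoke the content of Lemma~\ref{lem:endpoint_cont} for a convergent sequence of uniform quasi-geodesics rather than for a foliation per se.
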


\begin{proof}
Lemma \ref{lem:endpoint_cont} implies that the restriction of $\overline{e}_\rho$ to each leaf $L$ is continuous.   
We will use a similar argument to show global continuity.  It suffices to show continuity of $e^\pm_\rho$, since $\overline{e}_\rho$ is the induced map on a quotient space.  

Suppose that $x_n \to x_\infty$ is a convergent sequence in $\tilde{M} \times S^{n-1}$.  Let $L_n$ be the horizontal leaf containing $x_n$, and $L_\infty$ the leaf containing $x_\infty$.    Let $f_n$ denote the restriction of $f_\rho$ to $L_n$, considered as a topological embedding $\tilde{M} \hookrightarrow \UTtM$.  The definition of $f_\rho$ implies that the maps $f_n$ converge uniformly on compact sets of uniform diameter, i.e. for any $r > 0,\epsilon > 0$, there exists $N$ such that for all $n>N$, the restriction of $f_n$ to the $r$-ball $B_r(x_n)$ in $L_n$ is $\epsilon$-close in the $C^1$-topology  
 to the restriction of $f_\infty$ to $B_r(x_\infty)$.   
It follows that, for any fixed leaf $L^u$ of $\F^u$,  the quasi-geodesic segment $L^u \cap f_n(B_r(x_n))$ lies in some $C(r) \epsilon$ neighborhood of $L^u \cap f_\infty(B_r(x_\infty))$, where $C: [0, \infty) \to [0, \infty)$ is a continuous, increasing function (depending only on the geometry of $\F^u$), with $C(0) = 0$.  

Let $L^u_n$ be the leaf of $\F^u$ through $f_n(x_n) = f_\rho(x_n)$.  Since $f_\rho(x_n) \to f_\rho(x_\infty)$ these leaves converge on compact sets to the leaf $L^u_\infty$ through $f_\rho(x_\infty)$.  Combined with the above, we deduce that, for $n$ sufficiently large, $L^u_n \cap f_\rho(B_r(x_n))$ lies in the $2 C(r) \epsilon$ neighborhood of $L^u \cap f_\rho(B_r(x_\infty))$.    
Since these are uniform quasi-geodescis, 
Lemma \ref{lem:fellow_travel} now gives the desired continuity.   
\end{proof}

\begin{lemma} \label{lem:cont_rho}
The map $\rho \mapsto e^\pm_\rho$, defined on a neighborhood of $\rho_0$ in $\Hom(\pi_1M, \Homeo_+(S^{n-1}))$ and with image in the space of continuous maps $\tilde{M} \times S^{n-1} \to (S^{n-1} \times S^{n-1}) - \Delta$, is continuous with respect to the compact-open topology.    
\end{lemma}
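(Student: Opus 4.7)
The plan is to mirror the argument of Lemma~\ref{lem:cont_1}, trading variation of the base point for variation of the representation. The key input will be continuity of $\rho \mapsto f_\rho$ from Lemma~\ref{lem:leafwise_imm}, combined with the fact that the weak-unstable foliation $\F^u$ of $\UTtM$ is intrinsic and does not depend on $\rho$. After unpacking the two compact-open topologies, what I need to show is that for every compact $K \subset \tilde M \times S^{n-1}$ and every $\epsilon > 0$ there is a neighborhood $V$ of $\rho$ in $\Hom(\pi_1M, \Homeo_+(S^{n-1}))$ such that $d(e^\pm_{\rho'}(z), e^\pm_\rho(z)) < \epsilon$ for all $\rho' \in V$ and $z \in K$. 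Using continuity of $e^\pm_\rho$ (Lemma~\ref{lem:cont_1}) and compactness of $K$, this will be reduced to the following local statement: for each $z_0 = (x_0, p_0)$ and $\epsilon > 0$, there exist neighborhoods $U$ of $z_0$ and $V$ of $\rho$ with $d(e^\pm_{\rho'}(z), e^\pm_\rho(z_0)) < \epsilon$ for all $(z,\rho') \in U \times V$.

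To establish this local statement, I will set $\ell$ to be the leaf of $\Fqg_\rho$ through $z_0$ with $\xi^\pm = e^\pm_\rho(z_0)$, and apply Lemma~\ref{lem:fellow_travel} to $f_\rho(\ell)$ to obtain a compact $K_0 \subset \UTtM$ and $d > 0$ such that any uniform $(k,c)$ quasi-geodesic ray based near $f_\rho(z_0)$ that stays within distance $d$ of $f_\rho(\ell)$ on $K_0$ has its endpoints within $\epsilon$ of $\xi^\pm$. Next, invoking Lemma~\ref{lem:leafwise_imm}, I will choose $V$ and $U$ small enough that whenever $\rho' \in V$ and $z = (x,p) \in U$, the restriction of $f_{\rho'}$ to the horizontal leaf $\tilde M \times \{p\}$ is $C^1$-close to $f_\rho|_{\tilde M \times \{p_0\}}$ on a prescribed ball around $x_0$, and $f_{\rho'}(z)$ is close to $f_\rho(z_0)$. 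Since by property~(5) of Lemma~\ref{lem:leafwise_imm} the images $f_{\rho'}(\tilde M \times \{p\})$ remain uniformly transverse to $\F^u$, this $C^1$-closeness passes to $C^0$-closeness of the transverse intersections on any fixed compact set, in particular on $K_0$. Consequently the leaf of $\Fqg_{\rho'}$ through $z$, namely the $f_{\rho'}$-preimage of the intersection of $f_{\rho'}(\tilde M \times \{p\})$ with the $\F^u$-leaf through $f_{\rho'}(z)$, will fellow-travel $\ell$ within distance $d$ on $K_0$, and Lemma~\ref{lem:fellow_travel} will deliver the required control on endpoints.

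The main technical point to watch is uniformity of the approximation $f_{\rho'} \to f_\rho$: one needs $C^1$-in-$z$ convergence on balls of fixed radius around $x_0$, uniformly in the $S^{n-1}$-variable $p$, so that horizontal leaves through all nearby $(x,p)$ can be controlled simultaneously. This uniformity is precisely the content of the continuity statement built into Lemma~\ref{lem:leafwise_imm} — leafwise $C^1$-convergence that is uniform on compact subsets of $\tilde M \times S^{n-1}$ (with $\pi_1M$-equivariance covering the $\tilde M$-direction) — which is why that preparatory lemma was formulated in the strong form recalled there.
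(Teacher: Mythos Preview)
Your proposal is correct and follows essentially the same approach as the paper: both arguments rest on the continuity of $\rho \mapsto f_\rho$ from Lemma~\ref{lem:leafwise_imm} to conclude that the quasi-geodesic leaves of $\Fqg_{\rho'}$ fellow-travel those of $\Fqg_\rho$ on long segments, and then invoke Lemma~\ref{lem:fellow_travel} to control the endpoints. The paper's proof is slightly more streamlined in that it works directly with a compact $K$ and an open $O$ containing $e^\pm_\rho(K)$, without first reducing to a pointwise local statement as you do, but this is a difference of organization rather than substance.
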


\begin{proof}
This follows from continuity of $\rho \mapsto f_\rho$ and the definition of the topology on the end space.  In detail, let $\rho'$ be some fixed representation close to $\rho_0$, close enough so that $f_{\rho'}$ and the endpoint maps are defined.  
The space of continuous maps $\tilde{M} \times S^{n-1} \to (S^{n-1} \times S^{n-1}) - \Delta$ has the standard compact-open topology, so fix $K$ compact in $\tilde{M} \times S^{n-1}$ and an open set $O$ in $(S^{n-1} \times S^{n-1}) - \Delta$ containing the image of $K$.   Continuity of $\rho \mapsto f_\rho$ means that, for any $\epsilon, R>0$ if $\rho$ is chosen close enough to $\rho'$ then quasi-geodesics through points of $K$ pulled back via $f_\rho$ will remain $\epsilon$-close to quasi-geodesics pulled back via $f_{\rho'}$ on segments of length $R$.  
Lemma \ref{lem:fellow_travel} now guarantees that for $R$ large enough, the endpoints of geodesics through points of $K$ will remain in $O$. 
\end{proof}

\begin{lemma}[$e^-_\rho$ gives local parametrization of leaves] \label{lem:local_transversal}

Any local transversal for the geodesic flow $\Fqg_{\rho_0}$ will be a local transversal for any sufficiently close representation $\rho$, in particular, the leaf space $\mathcal{L}(\Fqg_\rho)$ is locally homeomorphic to $\R^{n-1} \times \R^{n-1}$.   Associating a leaf $\ell$ to the pair $(e^-_\rho(\ell), p)$, where $p \in S^{n-1}$ is the point such that $\ell$ lies in the horizontal leaf $\tilde{M} \times \{p\}$, gives a local chart for $\mathcal{L}(\Fqg_\rho)$.  
\end{lemma}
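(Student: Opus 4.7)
The plan is to verify three claims: (a) every local transversal to $\Fqg_{\rho_0}$ is still a local transversal to $\Fqg_\rho$ for $\rho$ sufficiently close to $\rho_0$; (b) the leaf space has the stated local product structure; and (c) the map $\ell \mapsto (e^-_\rho(\ell), p)$ is a local chart.

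For (a), I would start from property (5) of Lemma \ref{lem:leafwise_imm}: for $\rho$ in a small neighborhood of $\rho_0$, the image $f_\rho(L)$ of each horizontal leaf is $C^1$-close, on balls of uniform radius, to a weak-stable leaf of the geodesic flow. Since leaves of $\Fqg_\rho$ are pullbacks via $f_\rho$ of intersections $f_\rho(L) \cap L^u(\xi)$, while leaves of $\Fqg_{\rho_0}$ are intersections of weak-stable with weak-unstable leaves, the tangent line field of $\Fqg_\rho$ is $C^0$-close to the geodesic line field uniformly over balls of fixed radius in $\tilde{M} \times S^{n-1}$. By $\pi_1M$-equivariance and cocompactness, this closeness is uniform everywhere. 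Consequently, any embedded $(2n-2)$-dimensional disk transverse to $\Fqg_{\rho_0}$ remains transverse to $\Fqg_\rho$, and after possibly shrinking it meets each leaf of $\Fqg_\rho$ in at most one point.

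For (b), I would recall that when $\rho = \rho_0$, under the identifications of Sections \ref{sec:flow} and \ref{sec:endpoint_maps} the foliation $\Fqg_{\rho_0}$ is the geodesic foliation of $\UTtM$, whose leaf space is classically modeled on an open subset of $(\partial_\infty \tilde M)^2 \setminus \Delta \cong S^{n-1} \times S^{n-1}$ via the combined endpoint map $(e^-_{\rho_0}, e^+_{\rho_0})$. Under the identification $\UTtM \cong \tilde M \times S^{n-1}$ via the positive endpoint map, the positive endpoint $e^+_{\rho_0}(x)$ coincides with the horizontal leaf coordinate $p_x$, so the chart $(e^-_{\rho_0}, p)$ is genuinely a local homeomorphism onto its image in $S^{n-1} \times S^{n-1}$.

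For (c), I would define $\Phi_\rho(x) := (e^-_\rho(x), p_x)$ on a small transversal $T_0$ to $\Fqg_{\rho_0}$, which is also a transversal to $\Fqg_\rho$ by (a). Continuity of $\Phi_\rho$ in $x$ follows from Lemma \ref{lem:cont_1}, and continuity of $\rho \mapsto \Phi_\rho$ from Lemma \ref{lem:cont_rho}; so $\Phi_\rho$ is a small perturbation of the local homeomorphism $\Phi_{\rho_0}$. The main obstacle is local injectivity: if $x, x' \in T_0$ satisfy $\Phi_\rho(x) = \Phi_\rho(x')$, then they lie in the same horizontal leaf and the two leaves of $\Fqg_\rho$ passing through them share a common negative endpoint; by Lemma \ref{lem:qg_line} (connectedness of $f_\rho(L) \cap L^u(\xi)$) these leaves coincide, and transversality from (a) then forces $x = x'$. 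Invariance of domain upgrades the injective continuous map $\Phi_\rho$ to a homeomorphism onto an open subset of $S^{n-1} \times S^{n-1}$, giving the desired chart on $\mathcal{L}(\Fqg_\rho)$.
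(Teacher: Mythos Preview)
Your proposal is correct and follows essentially the same line as the paper's proof: both deduce preservation of transversals from the $C^1$-closeness of $f_\rho$-images to weak-stable leaves (Lemma~\ref{lem:leafwise_imm}(5)), and both use Lemma~\ref{lem:qg_line} to see that within a fixed horizontal leaf the quasi-geodesic leaves are parametrized by the unstable leaf $L^u(\xi)$ they arise from, hence by $\xi = e^-_\rho(\ell)$. Your write-up is more explicit than the paper's --- in particular, you spell out the invariance-of-domain step and the ``meets each leaf at most once'' condition, both of which the paper leaves implicit --- but the underlying argument is the same.
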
 

\begin{proof} 
Continuity of $\rho \mapsto f_\rho$ and the fact that $\Fqg_\rho$ is the pullback of the intersection of (smooth) leaves $f_\rho(L) \cap L^u$ implies that a compact local transversal for $\Fqg_{\rho_0}$ will remain transverse to $\Fqg_\rho$ when $\rho$ is sufficiently close to $\rho_0$.   
By Lemma \ref{lem:qg_line}, each leaf $L^u({\xi})$ of $\F^u$ intersects a leaf $f_\rho(L)$ in a (possibly empty) quasi-geodesic with negative endpoint $\xi$.  Thus, the negative endpoint map locally gives a parametrization of the leaves of $\Fqg$ which sit inside a fixed horizontal leaf $L$.  Continuity of $f_\rho$ and the negative endpoint map means that these paramatrizations vary continuously with the leaf $L$, giving the desired local chart.  
\end{proof}

\begin{lemma} \label{lem:surjective}
If $\rho$ is sufficiently close to $\rho_0$, then $\overline{e}_\rho$ is surjective.
\end{lemma}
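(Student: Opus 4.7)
The plan is to show that the image of $\overline{e}_\rho$ is both open and closed in $(S^{n-1} \times S^{n-1}) \setminus \Delta$. Since this target space is connected (the diagonal has codimension $n-1 \geq 1$) and the image is nonempty, surjectivity will then follow.

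For openness, I use the local chart from Lemma \ref{lem:local_transversal}: near a leaf $\ell_0$, the leaf space is parametrized by pairs $(\xi, p) \in S^{n-1} \times S^{n-1}$ where $\xi = e^-_\rho(\ell)$ and $p$ is the horizontal-leaf label. In these coordinates $\overline{e}_\rho$ takes the form $(\xi, p) \mapsto (E(\xi, p), \xi)$ for some continuous function $E$, and at $\rho = \rho_0$ one has $E(\xi, p) = p$. By Lemma \ref{lem:cont_rho}, shrinking the neighborhood of $\rho_0$ makes $E$ uniformly $C^0$-close to projection onto the $p$-coordinate over any fixed compact subset of the chart. A standard Brouwer-degree argument---a continuous self-map of a closed ball that is within $\epsilon$ of the identity has image containing a concentric ball of slightly smaller radius---applied to the map $p \mapsto E(\xi, p)$ for $\xi$ near $\xi_0$ then shows that $\overline{e}_\rho$ covers every target point sufficiently close to $\overline{e}_\rho(\xi_0, p_0)$, so the image is open.

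For closedness, suppose $(a_n, b_n) \in \mathrm{Image}(\overline{e}_\rho)$ converges to $(a, b) \in (S^{n-1})^2 \setminus \Delta$, and pick leaves $\ell_n \subset \tilde M \times \{p_n\}$ with $\overline{e}_\rho(\ell_n) = (a_n, b_n)$. After a subsequence, $p_n \to p_\infty \in S^{n-1}$. The projection of $\ell_n$ to $\tilde M$ is a uniform quasi-geodesic with ideal endpoints $a_n$ and $b_n$, so by Lemma \ref{lem:close_to_geod} it stays within a uniform distance $R$ of the genuine geodesic $\alpha_n$ joining $b_n$ to $a_n$. Since $\alpha_n$ converges uniformly on compact sets to the geodesic $\alpha$ from $b$ to $a$, for a chosen $x_0 \in \alpha$ we may pick $z_n$ on the projection of $\ell_n$ with $d(z_n, x_0) \leq R+1$; passing to a further subsequence, $z_n \to z_\infty \in \tilde M$. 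The base points $x_n := (z_n, p_n) \in \ell_n$ then satisfy $x_n \to (z_\infty, p_\infty)$, and Lemma \ref{lem:cont_1} yields $e^\pm_\rho(z_\infty, p_\infty) = \lim_n e^\pm_\rho(x_n) = (a, b)$, exhibiting a leaf through $(z_\infty, p_\infty)$ that realizes the pair $(a, b)$.

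The main obstacle is arranging the closedness step so that the base points $x_n$ genuinely converge in $\tilde M \times S^{n-1}$; this requires combining the fellow-traveller property of uniform quasi-geodesics (to keep $z_n$ in a bounded region of $\tilde M$) with compactness of $S^{n-1}$ (to extract a limit for $p_n$). Openness, by contrast, reduces via the local parametrization of Lemma \ref{lem:local_transversal} to a standard degree computation, once one observes that in these chart coordinates $\overline{e}_\rho$ is a $C^0$-small perturbation of a homeomorphism.
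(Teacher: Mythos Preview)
Your closedness argument is correct and clean, and the connectedness of $(S^{n-1}\times S^{n-1})\setminus\Delta$ is fine. The gap is in the openness step. You fix a leaf $\ell_0$, take the local chart there, and invoke Lemma~\ref{lem:cont_rho} to say that ``shrinking the neighborhood of $\rho_0$ makes $E$ uniformly $C^0$-close to projection onto the $p$-coordinate.'' But the neighborhood of $\rho_0$ you need depends on which compact chart you are working in, i.e.\ on $\ell_0$. Lemma~\ref{lem:cont_rho} is continuity in the compact--open topology, so it only controls $e^\pm_\rho$ on a \emph{fixed} compact subset of $\tilde M\times S^{n-1}$; different choices of $\ell_0$ correspond to charts sitting over arbitrarily distant regions of $\tilde M$, where you have no uniform estimate. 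So as written you have shown the image contains a neighborhood of one point, not that the image is open.

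The missing ingredient is $\pi_1M$-equivariance of $e^\pm_\rho$ (equation~\eqref{eq:equivariance}) together with cocompactness of the $\pi_1M$-action. With these, finitely many charts cover a fundamental domain, a single choice of ``$\rho$ close to $\rho_0$'' works on all of them, and equivariance propagates openness everywhere. But once you are using equivariance and cocompactness anyway, the paper's argument is more direct: pick a single transversal $D$ large enough that $\overline{e}_{\rho_0}(D)$ contains a compact fundamental domain $K$ for the $\pi_1M$-action on $(S^{n-1}\times S^{n-1})\setminus\Delta$; by Lemma~\ref{lem:cont_rho} the image $\overline{e}_\rho(D)$ still contains $K$ for $\rho$ close to $\rho_0$; and then equivariance of the image gives surjectivity immediately. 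Your Brouwer/degree step is essentially what the paper is tacitly using when it says $\overline{e}_\rho(D)$ still contains $K$, but the paper avoids having to establish openness at \emph{every} point.
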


\begin{proof}
Take a $(2n-2)$-dimensional disc $D$ in $\tilde{M} \times S^{n-1}$ that is a local transversal for the geodesic foliation $\Fqg_{\rho_0}$, chosen large enough so that the interior of the image $\bar{e}_{\rho_0}(D) \subset (S^{n-1} \times S^{n-1}) - \Delta$ contains a compact fundamental domain $K$ for the action of $\pi_1M$ on the space $(S^{n-1} \times S^{n-1}) - \Delta$ of unparametrised geodesics in $\tilde{M}$.  

By Lemma \ref{lem:local_transversal}, if $\rho$ is sufficiently close to $\rho_0$, then $D$ will also be a local transversal for $\Fqg_\rho$, and, by continuity of the endpoint map, $\bar{e}_\rho(D)$ will be $C^0$ close to $\bar{e}_{\rho_0}(D)$ and hence also contain $K$. Since $e^\pm_{\rho}$ is $\pi_1M$-equivariant, it follows that the image of  $\overline{e}_{\rho}$ is a set that is invariant under the action of $\pi_1M$ on $(S^{n-1} \times S^{n-1}) - \Delta$.  We have just shown that it contains a fundamental domain, so the image must be everything.  
\end{proof}

The following observation  will allow us to conclude the proof by arguing that $e^+_\rho$ defines a semi-conjugacy.  

\begin{proposition} \label{prop:positive_const}
Under the hypotheses of Lemma \ref{lem:surjective}, the restriction of $e^+_\rho$ to any horizontal leaf $L$ of $\tilde{M} \times S^{n-1}$ is constant.  
\end{proposition}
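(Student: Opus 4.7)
My plan is to show that the restriction of $e^+_\rho$ to $L = \tilde{M} \times \{p\}$ is \emph{locally} constant; since $L$ is connected and $e^+_\rho$ is continuous (Lemma \ref{lem:cont_1}), this yields that $e^+_\rho$ is constant on $L$.

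First I would rewrite the problem as one about integral curves of a vector field on $\tilde{M}$. Using the identification $\UTtM \cong \tilde{M} \times S^{n-1}$ via the positive endpoint map, $f_\rho(L) \subset \UTtM$ is the graph of a continuous map $g_p : \tilde{M} \to S^{n-1}$. By Lemma \ref{lem:leafwise_imm}(5), $g_p$ is uniformly $C^0$-close on any fixed compact subset of $\tilde{M}$ to the constant map $x \mapsto p$ (the case $\rho = \rho_0$). Equivalently, the unit vector field $V_p$ on $\tilde{M}$ defined by $f_\rho(\cdot, p)$ is $C^0$-close on compact sets to the radial field $V_0^p = \nabla b_p$, whose integral curves are the geodesic rays to $p$. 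By construction, the projection to $\tilde{M}$ of the quasi-geodesic leaf of $\Fqg_\rho$ through $(x_0, p)$ is an integral curve of $V_p$ starting at $x_0$, and $e^+_\rho(x_0, p)$ is its positive endpoint in $\partial_\infty \tilde{M}$. So it suffices to show that integral curves of $V_p$ starting at any two points $x_0, x_1 \in \tilde{M}$ share a common positive endpoint.

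The geometric input I would use is the Anosov contraction along weak-stable leaves: integral curves of the unperturbed field $V_0^p$ fellow-travel exponentially in the forward direction, which is exactly the statement that the strong-stable foliation inside $L^s(p)$ is exponentially contracted by the geodesic flow. For the perturbed field $V_p$, I would view nearby $V_p$-trajectories as quasi-orbits of the $V_0^p$-flow over successive compact time intervals, using the closeness provided by Lemma \ref{lem:leafwise_imm}(5) with $R$ chosen large. The exponential strong-stable contraction on each such interval should dominate the perturbation error accumulated across it, yielding a uniform bound on the forward distance between $V_p$-trajectories starting at nearby points. Since both projected trajectories are uniform quasi-geodesics in $\tilde{M}$, Lemma \ref{lem:fellow_travel} then forces their positive endpoints to agree, giving local constancy.

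The main obstacle lies in the persistence step: promoting $C^0$-closeness of the vector fields on large compact sets into uniform forward fellow-travelling of integral curves over \emph{all} time. In the smooth category this would follow from structural stability of Anosov flows, but here the relevant vector fields are merely $C^0$. I would expect to rely on a direct Gronwall-type estimate, iterated on successive compact time intervals of uniformly bounded size and combined with the uniform exponential contraction of the strong-stable foliation to absorb the local perturbation errors; $\pi_1 M$-equivariance of $f_\rho$ converts the closeness estimates of Lemma \ref{lem:leafwise_imm}(5) near a fundamental domain into uniform closeness along trajectories.
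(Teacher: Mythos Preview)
Your proposal has a genuine gap at the reformulation step. The claim that ``the projection to $\tilde{M}$ of the quasi-geodesic leaf of $\Fqg_\rho$ through $(x_0, p)$ is an integral curve of $V_p$'' is not correct. That leaf is (the pullback of) the intersection $f_\rho(L) \cap L^u(\xi)$ for the unique $\xi$ with $f_\rho(x_0,p) \in L^u(\xi)$. Both $f_\rho(L)$ and $L^u(\xi)$ are sections of $\UTtM \to \tilde{M}$, so their intersection projects to the locus $\{z \in \tilde{M} : V_p(z) = W_\xi(z)\}$, where $W_\xi$ is the unit field pointing away from $\xi$. The tangent direction to this locus at $z$ is governed by the kernel of $D(V_p - W_\xi)$ and has no reason to equal $V_p(z)$. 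What \emph{is} true is that the tangent to $f_\rho(L) \cap L^u(\xi)$ in $\UTtM$ is close to the geodesic flow direction (since $T f_\rho(L)$ is close to $E^s$ and $T L^u(\xi) = E^u$), and the flow direction projects to $V_p(z)$; so your projected curves are only \emph{approximate} integral curves of $V_p$. That distinction matters, because your downstream argument already sits at the limit of what $C^0$-closeness can deliver.

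Even granting the approximate description, the persistence step you flag as the ``main obstacle'' is indeed the heart of the matter, and your sketch does not resolve it. A Gronwall estimate requires Lipschitz control on the field, which you do not have; the iterated-interval scheme you propose is essentially a shadowing argument, but to make two $V_p$-trajectories shadow \emph{each other} (rather than each shadowing a geodesic) you would need control on the transverse behaviour of $V_p$, not merely its $C^0$ distance to $V_0^p$. Equivariance gives uniform $C^0$-closeness along trajectories, but that alone does not prevent slow drift in the strong-unstable direction over infinite time.

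For contrast, the paper's argument avoids these analytic issues entirely: it proceeds by contradiction using the \emph{uniform convergence group} property of the boundary action (Proposition~\ref{prop:convergence_group}). If $e^+_\rho$ were nonconstant along a segment $I \subset L$, one finds $\gamma_n \in \pi_1 M$ so that $\gamma_n \cdot e^+_\rho(I)$ continues to span a definite arc while $\gamma_n \cdot e^-_\rho(I)$ collapses to a point; passing to a limit horizontal leaf and invoking the local chart of Lemma~\ref{lem:local_transversal}, in which $e^-_\rho$ is one of the coordinates, produces a nondegenerate set on a single leaf with constant $e^-_\rho$, a contradiction. The argument is soft and topological, leveraging equivariance and boundary dynamics rather than hyperbolic estimates.
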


The broad idea of the proof is to use $\pi_1M$-equivariance and the uniform convergence group property of the boundary action to promote a (hypothetical) leaf where $e^+_\rho$ is non-constant to one where $e^-_\rho$ is not locally injective, which would contradict the local structure given by Lemma \ref{lem:local_transversal}.

\begin{proof}[Proof of Proposition \ref{prop:positive_const}]
Suppose for contradiction that $e^+_\rho$ is nonconstant on some leaf $L$, and let $I \subset L$ be a segment such that $e^+_\rho(I)$ is a nonconstant path with 
distinct endpoints in $S^{n-1}$.  We may even take $I$ to be transverse to $\Fqg_\rho$, if desired, and the reader may find this helpful in visualizing the proof.  Let $x$ and $y$ denote the endpoints of $e^+_\rho(I)$.     Since the image of $(e^+_\rho, e^-_\rho)$ avoids the diagonal, by shrinking $I$ if needed we may further assume that $e^+_\rho(I)$ is disjoint from $e^-_\rho(I)$, in particular $x \notin e^-_\rho(I)$.  

By the uniform convergence group property of the action of $\pi_1M$ on its boundary (Proposition \ref{prop:convergence_group}), 
there exist distinct $p, q \in S^{n-1}$ and a sequence $\gamma_n \in \pi_1M$ such that $\gamma_n(x) \to p$ and $\gamma_n(z) \to q$ for all $z \neq x$.  Thus, the image $\gamma_n  e^+_\rho(I) = e^+_\rho  \rho(\gamma_n)(I)$ will contain an arc between some points $p_n$ and $q_n$, with $p_n \to p$ and $q_n \to q$; while $\gamma_n  e^-_\rho(I) = e^-_\rho  \rho(\gamma_n)(I)$ pointwise converges to $\{q\}$. 

Consider the sequence of leaves $\rho(\gamma_n)(L)$.  Since the leaf space of the horizontal foliation (on $\tilde{M} \times S^{n-1}$) is compact, after passing to a subsequence these converge to some leaf $L_\infty$.   
 It will be convenient for us to remain in a compact set of $(S^{n-1} \times S^{n-1}) - \Delta$, so fix a small open neighborhood $N$ of $\Delta$, and let $J_n$ denote the closure of the connected component of $\gamma_n  e^+_\rho(I) - N$ containing $p_n$; this is some subinterval of $\gamma_n  e^+_\rho(I)$.  
 Let $D$ be a compact, local transversal for $\Fqg_\rho$, defined in a neighborhood of some quasi-geodesic leaf lying in $L_\infty$ so that for all $n$ sufficiently large, the projection of the segments $J_n \subset \rho(\gamma_n)(L)$ to the leaf space are contained in the image of $D$. 
Let $A_n$ denote the projection of $J_n$ to $D$.  After passing to a subsequence, the arcs $A_n$ converge, in the Hausdorff metric, to some nondegenerate set $A \subset D$ that lies in the leaf $L_\infty$.   
By continuity of the endpoint map, the image of $\overline{e}_\rho$ on $A$ contains a connected set of the form $\{q\} \times J$, where $J$ is a nondgenerate segment.  

However, as in Lemma \ref{lem:local_transversal}, we may choose the transversal $D$ so that the restriction of this transversal to each horizontal leaf $L'$ is the parametrization given by the negative endpoint map $e_\rho^-$.   This contradicts that we have a nondegenerate subset of $L_\infty$ in $D$ mapping to $\{q\} \times J$, with negative endpoint constant.   
\end{proof}

\subsubsection*{Conclusion of proof of Theorem \ref{thm:local_rig}}
We have just shown that, for representations $\rho$ in some neighborhood of $\rho_0$, the endpoint map $e^+_\rho$ is constant on each set $\tilde{M} \times \{p\} \subset \tilde{M} \times S^{n-1}$, so descends to a continuous map $S^{n-1} \to S^{n-1}$.  Lemma \ref{lem:surjective} implies that this map is surjective, and by construction, we have 
\[ e^+_\rho  \rho(\gamma)(x) = \rho_0(\gamma)   e^+_\rho(x) \]
as in Equation \eqref{eq:equivariance}, so $e^+_\rho$ is the desired semi-conjugacy between $\rho$ and the standard boundary action of $\pi_1M$.  
Strong topological stability (the claimed control on the semi-conjugacy) follows from continuity of $\rho \mapsto e^\pm_\rho$ proved in Lemma \ref{lem:cont_rho}.  
  \qed

\subsection{Topological stability of geodesic flows} \label{sec:KatoMorimoto}
We conclude this section with a short sketch of how our proof above gives a ``soft'' geometric proof of topological stability of the geodesic flow in negative curvature, as claimed in Theorem \ref{cor:straightening}.  

\begin{proof}[Proof of Theorem \ref{cor:straightening}]
Let $M$ be a closed manifold of negative curvature and $\Phi_t$ the geodesic flow on $\UT M$. Suppose that $\Psi_t$ is a flow such that the flowlines of the lift $\tilde \Psi_t$ to $\UT\tilde{M}$ each $\epsilon$ fellow-travel flowlines of $\tilde\Phi_t$ on segments of length $R$, as in the statement of Theorem \ref{cor:straightening}.  
The local-to-global principle (Lemma \ref{lem:local_to_global}) implies that there exists $N$ and $c$ such that, if $R \geq N$ and $\epsilon \leq c$, then flowlines of $\tilde\Psi_t$ project to quasi-geodesics in $\tilde{M}$, and so each flowline of $\tilde\Psi_t$ stays within a bounded distance of a unique flowline of $\tilde \Phi_t$, and so has well defined endpoints.   Lemma \ref{lem:fellow_travel} implies that if $\epsilon$ is sufficiently small, as $R \to \infty$ these endpoint maps $e^{\pm}: \tilde{M} \to \partial_\infty\tilde{M} \times \partial_\infty\tilde{M}$, sending a point $x$ to the positive and negative endpoints of the flowline $\tilde\Psi_t(x)$ converge uniformly on compact sets to the endpoint map for the geodesic flow.    By construction $e^{\pm}$ are $\pi_1(M)$-equivariant, so by the same argument as in Lemma \ref{lem:surjective}, we may conclude that $e^\pm$ is surjective onto the complement of the diagonal in $\partial_\infty\tilde{M} \times \partial_\infty\tilde{M}$, which is naturally identified with the flow space of $\tilde \Psi_t$.  This gives a $\pi_1M$-equivariant, continuous, surjective map from $\tilde{M}$ to the flow space of $\tilde \Psi_t$, which descends to a map defined on the flowspace of $\tilde \Phi_t$.

To improve this map on the level of orbit spaces to a topological equivalence of the flows, one may now use the averaging trick in Barbot \cite[Theorem 3.4]{Barbot} following Ghys \cite[Lemmas 4.3, 4.4]{Ghys84}.  Specifically, 
define first a map $h_0$, associating to each point $x \in \tilde M$ the closest point to $x$ on the geodesic between $e^+(x)$ and $e^-(x)$.  This maps flowlines to flowlines, but may not send a flowline injectively onto its image.  Rather, there is simply a continuous function $a: \R \times \tilde{M}$ satisfying $h_0(\tilde \Phi_t(x)) = \tilde \Psi_{a(t,x)}(h_0(x))$.   
To remedy this, fix $T$ large, and define $A(t) = \tfrac{1}{T}\int_0^T a(s, x) ds$. 
One checks that, if $T$ was chosen sufficiently large, the map 
\[ h(x):= \tilde \Phi_{A(t)}(h_0(x)) \]
sends each flowline of $\tilde \Psi_t$ continuously and injectively onto a flowline of $\tilde \Phi_t$, and descends to a continuous map $M \to M$ giving a topological equivalence of the flows. 
\end{proof}


\section{Examples}  \label{sec:blowup}
In this section we illustrate some of the phenomena that can appear in Theorem \ref{thm:local_rig}. We give two families of examples of actions that are semi-conjugate, but not conjugate, to the action of the fundamental group of a closed negatively curved manifold on its boundary. The first uses the work of Cannon and Thurston, and is specific to Kleinian groups. The second extends the classical Denjoy blow-up and applies to any action of regularity $C^1$. 
\subsection*{Cannon-Thurston Maps} 
We briefly summarize the construction of the Cannon--Thurston map (in a special case), following \cite{CT}.
Let $S$ be a closed, hyperbolic surface, $\phi$ a pseudo-Anosov diffeomorphism, and $M$ a hyperbolic 3-manifold given by the suspension of $\phi$, equipped with the suspension flow $\varphi_t$ of the pseudo-Anosov map $\phi$.   Lifting flowlines to the universal cover $\widetilde{M} = \H^3 $ gives a flow $\tilde{\varphi}_t$ whose flow space is a topological disk $D$, which may be identified with the universal cover $\tilde{S} \subset \tilde{M}$ of any fiber $S$ of $M$.  It is easily verified that flow lines of $\tilde{\varphi}_t$ are quasi-geodesics in $\H^3$, so we have continuous endpoint maps $e_{\pm}: D \to \partial_\infty \H^3$.  

Identifying $D$ with $\tilde{S}$, we have the standard boundary compactification $\widehat{D}= \tilde{S} \cup \partial_\infty\tilde{S}$.  
Cannon and Thurston \cite{CT} showed the action of $\pi_1M$ extends to the closed disk $\widehat{D}$ in a way that is compatible with the positive and negative endpoint maps.  This gives maps 
$$\hat{e}_\pm: \widehat{D} \longrightarrow \partial_\infty \H^3.$$
 These extensions coincide on the boundary $\partial_\infty\tilde{S}$ and are $\pi_1M$-equivariant. Gluing these together along the boundary, we obtain a $\pi_1M$-equivariant map
$$h_{CT} = \hat{e}_- \cup \hat{e}_+ : S^2 =( \widehat{D}_- \cup_{S^1} \widehat{D}_+)  \longrightarrow \partial_\infty \H^3.$$ 
This gives an induced action $\rho_{CT}$ of $\pi_1M$ on $S^2$. By equivariance of the construction and by minimality of the action of $\pi_1M$ on $ \partial_\infty \H^3$, we conclude that $h_{CT}$ is surjective.  Additionally, it follows directly from the construction that preimages of points under $h_{CT}$ are either points, closures of complementary regions of the stable or unstable geodesic lamination of $\varphi$, or closures of geodesics in $\widehat{D}$.  In particular $h_{CT}$ has contractible point-preimages and hence, by Moore \cite{Moore}, it can be approximated by homeomorphisms.  Let $h_n \in \Homeo_+(S^2)$ be a sequence of homeomorphisms such that $h_n \longrightarrow h_{CT}$ in the compact open topology in the space of continuous maps $S^2 \to S^2$. Then the conjugate actions
$h_n \circ \rho_{CT} \circ h^{-1}_n$
converge in the weak sense (element-wise) to the boundary action.   

In other words, in any neighborhood of the boundary action, there are conjugates of $\rho_{CT}$.  Note that none of these are themselves conjugate to the boundary action, as $\rho_{CT}$ is not minimal -- it has an invariant circle.  
We note also that $\rho_{CT}$ itself (and hence any conjugate of it) is rather flexible: the Alexander trick allows one to produce a continuous deformation from $\rho_{CT}$ to an action of $\pi_1M$ on $S^2$ with a global fixed point by continuously shrinking one hemisphere while enlarging the other.  

While we have described this construction for fibered hyperbolic 3-manifolds, it applies more broadly:  work of Frankel \cite{Frankel} shows that the Cannon--Thurston construction can be modified to give an analogous map on any closed hyperbolic manifold admitting a {\em quasi-geodesic} flow.

\subsection*{A ``blow-up" example.}  We describe how to equivariantly blow up an orbit $\Gamma \cdot{z}$ of a $C^1$ action of a countable group on an $n$-sphere to produce an action by homeomorphisms that is semi-conjugate to the original.  The semi-conjugacy map $h$ will be injective off of the preimage of this orbit, and have the additional property that preimages of points in $\Gamma \cdot{z}$ are homeomorphic to closed disks. In particular, $h$ may be approximated by homeomorphisms.
 
While our intended application is boundary actions of manifolds admitting negatively curved metrics, the construction applies quite generally to any $C^1$ action of a countable group on $S^n$ so we work in this broader context.   For actions on $S^1$ a similar construction works even for actions by homeomorphisms, and can, at least for abelian groups be smoothed to a $C^1$ action; this is the classical Denjoy blow-up.   The construction below could conceivably be generalized to group actions on any manifolds, however ensuring that the space obtained by ``blowing up" an orbit is again a manifold requires some care; here we are able to quote Cannon's description of Sierpinski spaces.
\begin{proposition}  \label{prop:blowup}
Let $\Gamma$ be a countable group and $\rho: \Gamma \to \Diff_+^1(S^{n})$ an action with dense orbit $Z$.  
Then there exists $\rho': \Gamma \to \Homeo_+(S^n)$ and a surjective, continuous map $h: S^n \to S^n$, such that the pre-image of each point in $Z$ is a closed disk, that is injective on the complement of $h^{-1}(Z)$, and such that $h \circ \rho = \rho' \circ h$.  
\end{proposition}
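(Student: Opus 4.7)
The strategy is to construct the source $S^n$ as an equivariant blow-up of the target $S^n$, replacing each $z \in Z$ with a closed $n$-disk in the spirit of the Denjoy blow-up. The $C^1$ hypothesis will enter at a single crucial step: to linearize the conjugated action near each new disk so that the resulting map extends continuously across the new boundary spheres.

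First, I would enumerate $Z = \{z_1, z_2, \ldots\}$ and fix a smooth chart $\psi_i$ at each $z_i$ sending $z_i$ to the origin in $\R^n$. Then form a space $X$ as the ``real blow-up'' of $S^n$ at $Z$, replacing each $z_i$ by the unit sphere $S^{n-1}_i \subset T_{z_i}S^n$ (topologised using polar coordinates in $\psi_i$) and capping each added sphere with a closed $n$-disk $D_i$. Equivalently, one may realise $X$ by choosing, inside a single copy of $S^n$, a null-sequence of pairwise disjoint closed round disks $D_i$ centred at the $z_i$. Either description yields the same topological space $X$ together with a natural continuous surjection $h: X \to S^n$ that collapses each $D_i$ to $z_i$ and is the identity elsewhere. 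The identification $X \cong S^n$ is Moore's theorem for $n=2$ and Cannon's shrinking theorem (applied to a null-sequence of pairwise disjoint tame cellular sets) for $n \geq 3$.

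Next, off $h^{-1}(Z) = \bigsqcup_i D_i$ the map $h$ is a homeomorphism, so I set $\rho'(\gamma) := h^{-1}\rho(\gamma)h$. On each disk $D_i$, writing $z_j := \rho(\gamma)(z_i)$ and using the charts $\psi_i, \psi_j$ to identify $D_i \cong D_j \cong B^n$, I extend by coning the projectivisation of the derivative:
$$\rho'(\gamma)(rv) \;=\; r \cdot \frac{D\rho(\gamma)(z_i) v}{|D\rho(\gamma)(z_i) v|}, \qquad r \in [0,1],\; v \in S^{n-1}.$$
The $C^1$ hypothesis ensures continuity across $\partial D_i$: if $x_n \in X \setminus \bigsqcup_i D_i$ approaches $p \in \partial D_i$ along a direction $v \in S^{n-1}_i$, then $h^{-1}\rho(\gamma)h(x_n)$ approaches $\partial D_j$ along direction $D\rho(\gamma)(z_i)v/|D\rho(\gamma)(z_i)v|$, matching the cone formula. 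The chain-rule cocycle identity for $D\rho$, combined with the functoriality of coning under composition of linear isomorphisms, gives the group law $\rho'(\gamma\delta) = \rho'(\gamma)\rho'(\delta)$. The resulting $\rho'$ takes values in $\Homeo_+(S^n)$ since coning preserves orientation, and the equivariance $h \circ \rho' = \rho \circ h$ holds by construction.

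The hardest part will be the topological identification $X \cong S^n$: one must verify that the $D_i$ form a null-sequence of pairwise disjoint cellular sets (routine for tame closed disks of shrinking diameter) and then invoke the appropriate decomposition-theoretic result (Moore for $n=2$, Bing/Cannon for $n \geq 3$), which is why the author explicitly flagged Cannon's description of Sierpinski spaces. A secondary point worth noting is the treatment of non-trivial point stabilisers: since $\rho'|_{D_i}$ is defined intrinsically via the derivative $D\rho(\gamma)(z_i)$, stabiliser elements automatically act by well-defined self-homeomorphisms of $D_i$ without any additional choice of invariant structure.
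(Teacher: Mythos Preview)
Your overall strategy matches the paper's: blow up each orbit point to a disk, define the new action on the inserted disks via the projectivized derivative, and invoke Cannon's characterization of Sierpinski spaces to identify the result with $S^n$.  The use of the $C^1$ hypothesis, the cone formula on each $D_i$, and the continuity check at $\partial D_i$ are all essentially what the paper does.

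However, there is a genuine gap in your construction of $X$ and $h$.  Your second description (``a null-sequence of pairwise disjoint closed round disks $D_i$ centred at the $z_i$'') cannot work when $Z$ is dense: once you have placed $D_1,\ldots,D_{i-1}$, the point $z_i$ may already lie in the interior of some earlier $D_j$, so no disjoint disk can be centred there.  Your first description (the simultaneous real blow-up at all $z_i$) has the same defect in disguise: you have only specified the topology near each individual blown-up point via polar coordinates, but since the $z_i$ accumulate on one another, these local charts overlap in a way you have not controlled, and it is not clear that the result is a manifold or that a continuous collapse map $h$ with the required point-preimages exists.

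This is exactly the difficulty the paper's proof is built to overcome.  The paper proceeds by an inverse limit: at stage $k$ one removes a small disk not around $z_k$ itself but around the \emph{preimage} $F_{k-1}^{-1}(z_k)$ under the composite of all previous collapse maps, with carefully chosen radii and Lipschitz constants so that the inverse limit embeds in $S^n$ as a subset satisfying the hypotheses of Cannon's theorem.  Only after this bookkeeping does one obtain a Sierpinski space whose complementary disks are in honest bijection with $Z$, and whose disk-collapse quotient hits $Z$ on the nose.  You should either carry out this inductive construction, or else argue more abstractly (build any Sierpinski complement in $S^n$, collapse its complementary disks to get an $S^n$ with some countable dense set $Z'$ marked, then produce a homeomorphism of $S^n$ carrying $Z'$ to $Z$ and check this homeomorphism can be chosen compatibly with the derivative data).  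Either way, the step you flagged as ``the hardest part'' is harder than your sketch acknowledges.
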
 

While blowing up a {\em finite} orbit under a group action is a standard construction, we know of no reference in the literature (beyond that for actions on $S^1$) for this result, so we give a proof.  

\begin{proof}
Our strategy is to use an inverse limit construction.  For simplicity, we assume that $Z$ is the orbit of a point $z$ with trivial stabilizer, however the construction works more generally using the fact the point stabilzers act naturally on the tangent space at any fixed point.   
Enumerate $\Gamma = \{ \gamma_1, \gamma_2, \ldots \}$, and let $z_n = \gamma_n(z)$.  
Let $X_0$ denote the unit sphere $S^n$ with the standard round metric.  Fix some small $\epsilon_1>0$, let $D'_1$ denote the $\epsilon_1$-ball about $z_1$, and $D_1 \subset D'_1$ the $\frac{1}{2}\epsilon_1$ ball about $z_1$.   Let $X_1 = X_0 - D_1$ and define $f_1: X_1 \to X_0$ to be a $C^\infty$ map that is the identity on $X_1 - D'_1$ and is a radial collapse along geodesics through $z_1$ on the annulus $(D'_1 - D_1) \subset X_1$ that sends $\partial D_1$ to the point $z_1$ and is injective otherwise.
In this way $f_1^{-1}$ gives an identification of $\partial D_1$ with the positive projectivized tangent space of oriented lines at $z_1$, so that the action of any $C^1$ diffeomorphism $g$ of $S^n$ fixing $z_1$ naturally extends to a homeomorphism $\hat{g}$ of $X_1$ such that $f_1\circ \hat{g} = g \circ f_1$.   Additionally, we can ensure this map has the Lipshitz property that $d(f_1(x), f_1(y)) < 3 d(x,y)$ for all $x, y \in X_1$, or equivalently, $d(f_1^{-1}(x), f_1^{-1}(y)) > \frac{1}{3} d(x,y)$.

Now inductively, suppose that for all $m \leq k$ we have defined $X_m \subset S^n$ (topologically, a sphere with $m$ holes) and a $C^\infty$ surjective map $f_m: X_m \to X_{m-1}$. Let $F_m : X_m \to X_0$ denote the composition $f_m f_{m-1} \ldots f_1$.  Choose some $\epsilon_{k+1}  \le \epsilon_k/2$ that is additionally less than half the distance from $F_k^{-1}(z_k)$ to the nearest boundary component of $X_k$.  Choose $r_{k+1} \ll 1$, and define $X_{k+1}$ to be $X_k$ with an open $r_{k+1} \epsilon_{k+1}$-ball about $F_{k+1}^{-1}(z_{k+1})$ removed, and $f_{k+1}: X_{k+1} \to X_k$ a map that collapses the boundary of the removed disk to the point $F_{k+1}^{-1}(z_{k+1})$, with support on a $\epsilon_{k+1}$ disk, defined using the same procedure as above.  If $r_{k+1}$ is chosen sufficiently small, then we can ensure that this map has the Lipschitz property 
\[ d(f_{k+1}^{-1}(x), f_{k+1}^{-1}(y)) > \lambda_{k+1} d(x,y) \]
for all $x$ and $y$ (and all choices of points in the preimages in the degenerate case where $x = y = z_{k+1}$), moreover $\lambda_{k+1} < 1$ can be taken as close to 1 as we like, by choosing $r_{k+1}$ close to 0.  Make such a choice, inductively, so that the product $\lambda_1 \lambda_2 \lambda_3 \ldots$ converges to some $\delta > 0$.   
 
As before, the induced identification of the tangent space at $z_{k+1}$ with the boundary of the disc $D_{k+1}$ means that  any diffeomorphism of $X_k$ fixing $z_k$ defines a diffeomorphism of $X_{k+1}$.  More generally, if $g$ is a diffeomorphism of $S^n$ that preserves the set $\{z_1, \ldots z_{k+1}\}$, it also defines a diffeomorphism of $X_{k+1}$ (via conjugation by $F_{k+1}$ on the invariant set $S^n - \{z_1, \ldots z_{k+1}\}$ on which $F_{k+1}^{-1}$ is a diffeomorphism, and on the inserted boundary disks by the identification of them with the tangent spaces to the points $z_i$).  

To summarize, these spaces and maps have the following properties:
\begin{enumerate}
\item[(i)] The sets $X_k \subseteq X_{k-1} \subseteq S^n$ form a monotone decreasing family;
\item[(ii)] Under the map $f_k: X_k \to X_{k-1}$, each point has a unique preimage, except for the point $z'_k =F_{k-1}^{-1}(z_k)$ whose preimage is the boundary of the $r_k \epsilon_k$-ball $D_k$ about $z'_k$. 
\item[(iii)] For each $f_k$ we have $\displaystyle{\sup_{x\in X_k} d(f_k(x),x) \le \epsilon_{k} \le \frac{\epsilon_1}{2^{k-1}} < \frac{1}{2^{k-1}} }$.
\item[(iv)] For any $x, y \in X_0$ we have $d(F_k^{-1}(x),  F_k^{-1}(y)) > \delta d(x,y)$ for all $k \in \N$.

\end{enumerate} 

Let $X=\varprojlim X_k$ be the inverse limit of the system of maps $X_k \overset{f_k}\to X_{k-1}$, i.e. 
\[ X := \{ (\ldots ,p_2, p_1, p_0) \ | \ p_i \in X_i \textrm{ and } p_{k-1} = f_k(p_{k})  \}. \]
Since each map $f_k$ is continuous, $X$ is a closed subset of the product $\prod_{k=0}^\infty X_k$, hence is also compact.
Property (iii) above means that for each element $(\ldots ,p_2, p_1, p_0) \in X$, the points $p_0, p_1, \ldots $ form a Cauchy sequence in $S^n$ so $\lim_{k \to \infty} p_k$ is well defined; since $p_k \in X_k \subset X_{k-1}$ the limit lies in the intersection $\cap_k X_k$.   Define $\phi: X \to \cap_k X_k \subset S^n$ by setting $\phi(\ldots ,p_2, p_1, p_0) = \lim_{k \to \infty} p_k$.   
Property (iv) above ensures that $\phi$ maps sequences associated to distinct points to distinct limits, so $\phi$ is injective. Since $X$ is compact and $S^n$ is Hausdorff, $\phi$ is therefore a homeomorphism onto its image.    
Note that the image of $\phi$ contains the union of all boundaries of removed discs $D_k$, since for any $p_k \in \partial D_k$, the sequence $(\ldots p_k, p_k, p_k, f_k(p_k), f_{k-1} f_k(p_k) \ldots F_k(p_k))$ is an element of $X$, with $F_k(p_k) \in Z$. 

We will now make use of the following result of Cannon.  
\begin{theorem}[Cannon \cite{Cannon73}] \label{thm:Cannon}
Let $S \subset S^{n}$ be a closed subset, and let $U_i$ denote the connected components of $S^n-S$.  Then $S$ is homeomorphic to the (unique up to homeomorphism) $n-1$ dimensional Sierpinski space if and only if the following hold
\begin{enumerate}
\item For each $i$, $S^n-U_i$ is an $n$-cell
\item The closures of the $U_i$ are pairwise disjoint
\item $\bigcup_i{U_i}$ is dense in $S^n$, and 
\item $U_1, U_2, \ldots$ is a null sequence, meaning that $\mathrm{diam}(U_n) \to 0$.
\end{enumerate} 
\end{theorem}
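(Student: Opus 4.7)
The plan is to prove the two directions separately. The forward implication is essentially by construction of a standard model: I would start with $S^n$, remove an open $n$-cell, and then at each subsequent stage remove disjoint open $n$-cells from the remainder, choosing the collection so that every point of the eventual complement is approximated by removed cells and so that diameters tend to zero. This produces a closed set satisfying (1)-(4) and matching the usual definition of the Sierpi\'nski space, so the substance of the theorem is the converse: any $S \subset S^n$ satisfying (1)-(4) is ambiently equivalent to this model.

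For uniqueness, my approach would be a back-and-forth construction. Given two sets $S, S' \subset S^n$ satisfying the four conditions, I would enumerate their complementary domains $\{U_i\}$ and $\{V_j\}$ with non-increasing diameters, and then inductively build homeomorphisms $h_k : S^n \to S^n$ so that after the $k$-th step, $h_k$ carries a prescribed finite collection of $U_i$'s onto a prescribed finite collection of $V_j$'s, with consecutive $h_k$ agreeing outside a region of small diameter. At each stage I would pair the largest unmatched $U_i$ with the largest unmatched $V_j$, alternating roles to guarantee that every domain is eventually matched. The pairing is implemented via the generalized Schoenflies theorem: by condition (1) each $\overline{U_i}$ is an $n$-cell whose boundary is a bicollared sphere, so any chosen boundary identification extends to a homeomorphism of $\overline{U_i}$ onto $\overline{V_j}$, and this extension may be further extended by the identity to a homeomorphism of $S^n$ supported in a prescribed small neighborhood.

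The main obstacle will be ensuring that the sequence $(h_k)$ converges to a homeomorphism $h$ rather than merely to a continuous surjection. Condition (4) is precisely what makes this feasible: only finitely many domains exceed any given diameter $\epsilon$, so once those have been matched all further modifications can be made on arbitrarily small sets, yielding uniform convergence of $(h_k)$ to a continuous $h$. Conditions (2) and (3) are then used to verify injectivity of $h$ on $S$, since density of $\bigcup U_i$ from (3) forces any failure of injectivity to be detectable on the matched domains, which are disjoint by (2) and matched bijectively. The cleanest formal framework in which to carry this out would be to phrase the construction as a cellular upper semi-continuous decomposition of $S^n$ whose non-degenerate elements are the $\overline{U_i}$, and apply Bing's shrinking criterion to identify the quotient with $S^n$; verifying this criterion directly from (1)-(4) is where the genuine technical work lies, especially in dimensions $n \geq 4$ where one must be careful about local flatness of the embedded spheres $\partial U_i$.
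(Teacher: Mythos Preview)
This theorem is not proved in the paper at all: it is quoted as an external result of Cannon \cite{Cannon73}, and the authors simply invoke it as a black box in the course of proving Proposition~\ref{prop:blowup}. The only commentary they add is the remark that Cannon's original statement excluded $n=4$, but that the case $n=4$ follows once one has Quinn's annulus theorem. So there is no ``paper's own proof'' to compare your proposal against.

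That said, your sketch is broadly in the spirit of how Cannon's theorem is actually proved. The back-and-forth matching of complementary domains, using the generalized Schoenflies theorem to realize each match by an ambient homeomorphism supported near the domain, and controlling convergence via the null-sequence hypothesis, is the standard strategy. One point to be careful about: you will need the boundaries $\partial U_i$ to be bicollared (equivalently, locally flat) in order to apply Brown's Schoenflies theorem, and condition~(1) alone does not obviously give this; Cannon's argument handles this, and it is exactly here that the annulus theorem enters in dimension~4. Your closing remark about the shrinking criterion is a reasonable alternative packaging, though Cannon's original proof is more direct.
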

\noindent Cannon's result is stated for $n \neq 4$, but applies in all dimensions given Quinn's proof of the Annulus theorem in dimension 4.  
Apply this to the set $S = \cap_n X_n$.  The complimentary regions are the discs $D_i$.  By construction, they have pairwise disjoint closures, $S^n - D_i$ is an $n$-cell, and the sets form a null sequence.   To see that $\bigcup_i{D_i}$ is dense, suppose for contradiction that some open ball $B$ of radius $\epsilon > 2^{-k}$ was in the complement of the closure of $\bigcup_i{D_i}$.   Consider the sequence of maps $f_{k+m} \circ \ldots f_{k+2}  \circ f_{k+1}$ defined on $\cap_n X_n$, for $k$ fixed, as $m \to \infty$.  By property (iii) above, these pointwise converge to a map $\cap_n X_n \to X_k$ which moves all points distance at most $2^{-k-1}$.  Thus, the image of $B$ under the limit contains an open set in $X_k$.  However, such a set must intersect the dense set $F_k^{-1}(Z)$, contradicting the fact that $B$ does not intersect the closure of the union of the discs $D_i$.   We conclude that $S$ is a Sierpinski space.   

Since $\phi$ is a homeomorphism from the compact space $X$ whose image contains a (closed) dense subset of the Sierpinski space $\cap_n X_n$, we conclude that $\phi(X) = \cap_n X_n$.   
We claim now that the projection $F: X \to X_0 = S^n, (\ldots ,p_2, p_1, p_0)  \mapsto p_0$ 
gives a homeomorphism between the set $X - F^{-1}(Z)$ and $S^n-Z$ and that there is an action of $\Gamma$ on $X$ by homeomorphisms such that the restriction to $X - F^{-1}(Z)$ agrees (under this homeomorphic identification) with the original action of $\Gamma$. Given this, collapsing each boundary of a connected component of the compliment of $X \cong \phi(X)$ to a single point gives a sphere $\overline{X}$, and $F$ induces a continuous, surjective map $\overline{X} \to S^n$ that intertwines the two actions, as desired.  
As we have already observed the restriction of $F$ to $X - F^{-1}(Z)$ is injective, which implies that $F: X - F^{-1}(Z) \to S^n - Z$ is a homeomorphism, since it is a continuous bijection induced from the map $F$, and $F$ is a continuous map between compact metric spaces, hence closed.

The action of $\Gamma$ on $X$ comes from our description of the $X_i$ as a blow-up of the tangent space to a point.  For each $i$, the set $F^{-1}(z_i)$ is a circle, identified with the projectivized tangent space (the space of oriented lines in $T_{z_i}(S^n)$), via projection to $X_i$ and the identification there.  This gives an action of $\Gamma$ by bijections of $X$; it remains to show that it is in fact an action by homeomorphisms.  For this it suffices to check continuity of the action of each $\gamma \in \Gamma$.    Let $x_n \to x_\infty$ be a sequence of points in $X$.  If $x_\infty \notin F^{-1}(Z)$, that $\gamma(x_n)$ converges to $\gamma(x_\infty)$ follows directly from our construction and the definition of the inverse limit.  
If $x_\infty \in F^{-1}(z_j)$ for some $z_j$, then it suffices to project to $X_j$ and work there. That $x_n$ converges to $x_\infty$ in $X_j$, where $x_\infty$ is a boundary point means precisely that, as $n \to \infty$, the points $F_j(x_n)$ converge to $z_j$ and $\frac{F_j(x_n)-z_j}{|| F_j(x_n)-z_j||}$  converges to the tangent direction $v$ represented by $z_\infty$.  Continuous differentiability of $\gamma$ at $z_j$ is all that is required to have $\gamma(x_k) \to \gamma(x_\infty)$, this is why we assumed our original action was of class $C^1$. 
\end{proof}

\section{Global rigidity of slitherings from skew-Anosov foliations} \label{sec:skew}

In this section we specialize to actions of fundamental groups of certain 3-manifolds on $S^1$.  In this case, Lemma \ref{lem:leafwise_imm} gives a homeomorphism rather than a continuous map, and we will exploit this property to prove a global rather than local rigidity result for (lifts of) boundary actions and the more general case of actions induced by ``slitherings'' from skew-Anosov flows.  We begin by summarizing some standard results and framework needed for the proof.  

\subsection{Anosov Flows}
A flow $\Phi_t$ generated by a vector field $Y$ on a closed 3-manifold $M$ is {\em Anosov} if the tangent bundle splits as a sum of (continuous) line bundles that are invariant under the flow
$$TM =   E^{ss} \oplus \langle Y \rangle \oplus E^{uu}$$
with the property that for some choice of metric on $M$, there are constants $C, \lambda >0$ such that
$$||(\phi^t)_*(v^{s})|| \leq C e^{-\lambda t} ||v^{s}|| \text{ and }  ||(\phi^t)_*(v^{u})|| \geq C^{-1} e^{\lambda t} ||v^{u}||$$
holds for all $ t \ge0$ and all $v^{u}\in E^{uu}, v^{s} \in E^{ss}$. 
By averaging the metric over long time intervals and decreasing $\lambda$, one can assume that $C = 1$. Such a metric is called {\em adapted}.

The line fields $E^{uu}, E^{ss}$ are called the {\em strong unstable} and {\em strong stable} directions of the flow. It is a classical fact that these distributions are uniquely integrable.  The foliations to which they are tangent are characterized by the dynamical property that their leaves consist of sets of points that are asymptotic under the flow in forward, respectively backward, time. 
One also obtains foliations $\F^s$ and $\F^u$ tangent to the integrable plane fields 
$$ E^{s}= E^{ss} \oplus \langle Y \rangle \ , \  E^{u} = E^{uu} \oplus \langle Y \rangle$$
respectively; these are called the {\em weak stable} and {\em unstable} 
foliations of the flow.  In the examples of interest to us the line fields $E^{ss}$ and $E^{uu}$ will always be orientable, i.e.\ trivial as line bundles, so from now on we take orientability to be a standing assumption.  

The following proposition collects some well-known properties of the weak foliations of an Anosov flow that we will need going forward.  
The additional $C^1$ structure given by point (1) below will be important in the proof of Theorem \ref{thm:slith_rigid}.

\begin{proposition}\label{prop:Anosov_properties}
Let $\Phi_t$ be an Anosov flow on a closed $3$-manifold $M$. Then the following hold
\begin{enumerate} 
\item[(i)] (Hirsch-Pugh \cite{Hirsch_Pugh}): The weak stable and unstable foliations $\F^s$ and $\F^u$ are of class $C^1$.
\item[(ii)] $M$ admits a metric such that the induced metric on weak stable and unstable leaves in $\tilde{M}$ is uniformly bi-Lipschitz equivalent to a metric of constant  curvature $-1$.  In this metric, the flowlines on each leaf are quasi-geodesics;  on a leaf of $\F^s$, flowlines share a unique common forward endpoint, and on $\F^u$ a common negative endpoint.  
\end{enumerate}
\end{proposition}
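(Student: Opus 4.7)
The plan is to handle (i) and (ii) separately, with (i) following essentially by citation and (ii) by constructing explicit hyperbolic-like coordinates on leaves.

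For part (i), Hirsch--Pugh (or more precisely the Hirsch--Pugh--Shub stable manifold theorem applied to the time-one map of $\Phi_t$) yields that the weak foliations $\F^s, \F^u$ of a $C^1$ Anosov flow on a closed 3-manifold are of class $C^1$. The dimension constraint is essential here: because leaves of $\F^s$ are 2-dimensional and the ambient manifold is 3-dimensional, one only needs regularity of a codimension-1 foliation, for which the general bunching conditions are automatic. I would simply cite \cite{Hirsch_Pugh} without further elaboration.

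For part (ii), I would start by fixing an adapted metric so that $\|(\Phi^t)_* v^s\| \leq e^{-\lambda t} \|v^s\|$ and $\|(\Phi^t)_* v^u\| \geq e^{\lambda t}\|v^u\|$ for all $t \geq 0$, with constants that are uniform on $M$. Now fix a weak stable leaf $L$ in $\tilde{M}$. Using the orientability assumption on $E^{ss}$ and the flow direction, there is a global coordinate system $(t, s)$ on $L$, where $t$ parametrizes the flowline through a basepoint and $s$ is the signed arclength along the strong stable leaf at time $t = 0$; every point of $L$ is obtained by flowing a point on this strong stable leaf. The induced Riemannian metric on $L$ in these coordinates has the form $dt^2 + \psi(t,s)^2 \, ds^2$ for some positive function $\psi$, and the Anosov inequalities (applied to tangent vectors $\partial/\partial s$ transported by the flow) imply that $\psi$ is comparable, up to uniform multiplicative constants, to $e^{-\lambda t}$.

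Changing variables $y = e^{\lambda t}$, so $dy = \lambda y \, dt$, the model metric $dt^2 + e^{-2\lambda t}ds^2$ transforms to $\lambda^{-2}(dy^2 + ds^2)/y^2$, which is, up to a constant factor, the upper half-plane model of $\mathbb{H}^2$ in coordinates $(s, y)$ with $y > 0$. Thus the induced metric on $L$ is uniformly bi-Lipschitz to a metric of constant curvature $-1$. In this model, flowlines correspond to the vertical lines $\{s = \text{const}\}$, which are exact geodesics in the upper half-plane; consequently in the original metric they are quasi-geodesics. All such vertical rays share the point $y = +\infty$ as a common endpoint at infinity, which is precisely the common forward endpoint of all flowlines on $L$. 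The corresponding statement for weak unstable leaves follows by reversing time, with the common endpoint being the negative endpoint.

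The main obstacle is the uniform bi-Lipschitz comparison on $L$: one must verify that the comparison constants for $\psi$ versus $e^{-\lambda t}$ can be taken independent of the leaf $L$, using cocompactness of the action of $\pi_1 M$ on $\tilde M$ and the adapted metric; otherwise, the model metric is only leafwise bi-Lipschitz and not uniformly so.
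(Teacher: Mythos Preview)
Your proposal is correct and follows essentially the same route as the paper. For (ii) the paper does exactly what you do: it introduces coordinates $(x,t)$ on a weak leaf given by arclength along a strong leaf and flow time, obtains a metric of the form $f(x,t)^2\,dx^2 + dt^2$ with $\varepsilon e^{-\lambda t}\le f \le e^{-\lambda t}$ from the Anosov estimates, and compares with the pullback of the Euclidean metric under $(x,t)\mapsto e^{-\lambda t}$ (i.e.\ your upper half-plane substitution). For (i) the paper gives a slightly more explicit mechanism than your bare citation: it passes to the quotient bundle $TM/\langle Y\rangle$, observes that the images $\overline{E}^s,\overline{E}^u$ are uniformly contracted and expanded by $D\Phi_t$, and then invokes the $C^1$-section theorem of \cite{HPS} to conclude these sub-bundles, and hence $E^s,E^u$, are $C^1$; your appeal to automatic bunching in codimension one is in the same spirit but less specific.
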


For completeness, we give an outline of the proof.  The reader may consult \cite[Section 5]{Fenley} for more details and general background.  

\begin{proof}
Item (1) follows from the proof of the Smoothness Theorem part (i) in \cite{Hirsch_Pugh}.  Specifically, one applies the graph transform argument there to the quotient bundle $TM / \langle Y \rangle$ upon which the flow acts. This action has two invariant sub-bundles 
$\overline{E}^{s} , \overline{E}^{u}$ given by the images of the weak stable and unstable subbundles. Since these are uniformly contracted and expanded, respectively, by $D \Phi_t$, the $C^1$-section Theorem \cite{HPS}
then implies that $\overline{E}^{s} $ and $\overline{E}^{u}$ are $C^1$. Pulling back to $TM$, one deduces that the subbundles $ E^{s}, E^{u}$ are of class $C^1$ as well.  Since they are invariant under the flow, it follows that they are tangent to $C^1$-foliations.

To show item (2), take a $C^0$-metric on $M$ so that the strong stable/unstable directions and the flow direction are all orthogonal, and the generating vector field has unit length. Without loss of generality we assume that this metric is adapted to the Anosov flow.  In general, this metric may only be continuous, but we do not need any higher regularity for the argument.  
Let $L$ be a leaf of the weak unstable foliation, and $\ell_s$ a strong-stable leaf through some point $p \in L$.  Then $\ell_s$ is a section for the restriction of $\Phi_t$ to $L$.  Parametrize $\ell_s$ by arc length and call this coordinate $x$.  The lift $\widetilde{\ell}_s$ gives a section for the induced flow on the universal cover $\widetilde{L}$ and hence a global coordinate system $(x,t)$ on $\widetilde{L}$ so that the pulled-back metric is of the form
$$\begin{pmatrix}
    f^2(x,t)        & 0\\
    0         & 1
\end{pmatrix}$$
In particular, the flow lines are geodesics with respect to this metric. By construction $f(x,0) = 1$ and the Anosov condition gives the bounds 
$$ \varepsilon e^{-\lambda t} \le f(x,t) \le e^{-\lambda t}.$$
This implies that the the metric on $\widetilde{L}$ is uniformly bi-Lipshitz equivalent to the pull-back of the flat metric on $\R^2$ by $(x,t) \mapsto e^{-\lambda t}$, i.e. a constant negative curvature hyperbolic metric on the upper half plane, hence bi-Lipshitz equivalent to standard hyperbolic metric of constant curvature -1.   In the hyperbolic metric, vertical lines are geodesics with the same forward endpoint, and these correspond to flowlines under our bi-Lipshitz identification.  
The case of the unstable foliation follows {\em mutatis mutandis}.
\end{proof}

\subsection{Slitherings and skew-Anosov flows}
We first recall the notion of a slithering, as introduced by Thurston in \cite{ThurstonSlithering}. 
\begin{definition}[Slithering]
Let $M$ be a closed 3-manifold.  A {\em slithering} of $M$ over $S^1$ is a fibration $s: \widetilde{M} \to S^1$ with $2$-dimensional fibers such that deck transformations are bundle automorphisms for $s$, taking fibers to fibers.  
This means that the foliation of $\widetilde{M}$ given by the fibers of $s$ descends to a foliation on $M$.
\end{definition}

Since deck transformations take fibers to fibers, a slithering $s: \widetilde{M} \to S^1$ also induces a natural {\em slithering action}  $\rho_s: \pi_1M \to \Homeo_+(S^1)$ on the circle.  Following our earlier convention, we continue to  assume that all foliations are oriented, so this slithering action is by orientation preserving homeomorphisms.  
Slitherings generalize both the notion of a fibering over $S^1$ (where $s$ is simply the lift of the bundle projection to $\widetilde{M}$), and the notion of a foliated $S^1$-bundle, where $s$ is the projection to the fiber on the induced foliated bundle over $\widetilde{M}$.   Skew-Anosov flows (a generalization of geodesic flows on negatively curved surfaces) provide another important source of examples.

\begin{example}[Skew-Anosov flows]\label{ex:skew_Anosov}
Let $\Phi_t$ be an Anosov flow on a closed 3-manifold $M$, whose stable foliation is oriented and $\mathbb{R}$-covered, meaning that the leaf space on the universal cover is Hausdorff (or equivalently, is homeomorphic to $\mathbb{R}$).  Results of Fenley \cite{Fenley} and Barbot \cite{Barbot} show that a flow with this property is either the suspension of an Anosov diffeomorphism of $T^2$ or is {\em skew}, meaning that the orbit space of the lift of the flow to $\widetilde{M}$ is homeomorphic to the infinite diagonal strip 
\[\mathcal{O} = \{ (x,y) \in \mathbb{R}^2 \ | \ |x - y| <1\}\] in such a way that the preimages of horizontal (respectively, vertical) intervals are the stable (resp. unstable) leaves of the flow, as illustrated in Figure \ref{Skew_Flow}.
\begin{figure}
  \labellist 
  \hair 2pt
     \pinlabel $o_u$ at 135 160
    \pinlabel $o_l$ at 215 95
   \endlabellist
   \centerline{ \mbox{
 \includegraphics[width = 2.5in]{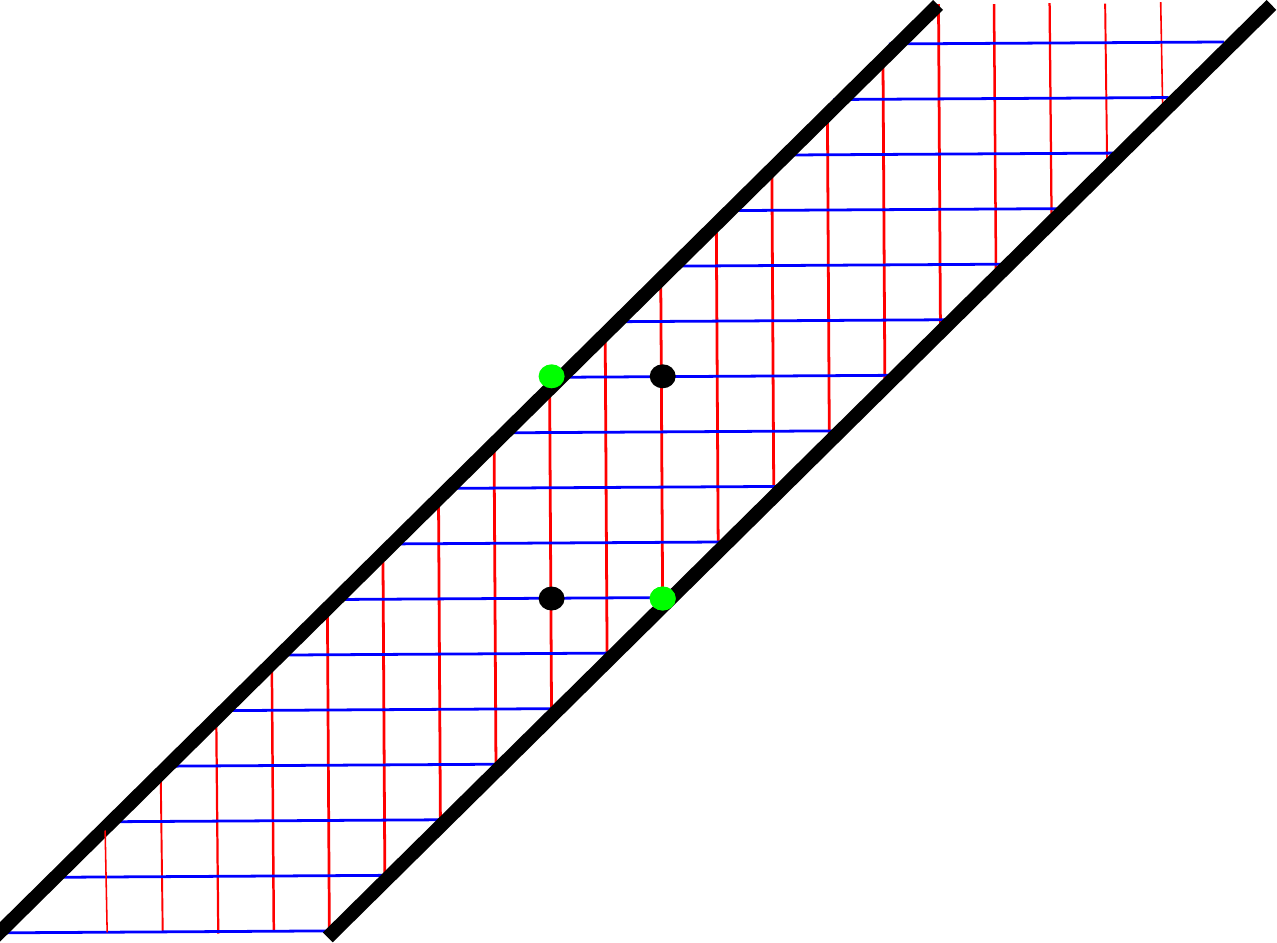}}}
 \caption{The flow space $\mathcal{O}$ of a skew-Anosov flow. The two black points are related by the natural map $\eta$ on  $\mathcal{O}$}
  \label{Skew_Flow}
\end{figure}

In this model, each point $o \in \mathcal{O}$ can be assigned a point $o_u$ on the upper boundary by following the unstable leaf through $o$, and a point $o_l$ on the lower boundary by following the unstable leaf.  Taking the intersection of the stable leaf through $o_u$ and unstable through $o_l$ defines a continuous, fixed point free map $\eta: \mathcal{O} \to \mathcal{O}$.  This map sends stable leaves to unstable leaves and vice versa, so $\tau = \eta^2$ descends to a map on the leaf space $\Lambda^s$ of the weak stable foliation.  This map is strictly monotone, and the quotient map.
\[\tilde{M} \to \Lambda^s \to \Lambda^s/\tau\] defines a slithering of $M$.
By construction, the foliation associated to this slithering is the weak stable foliation of $\Phi_t$.
\end{example}

The map $\eta$ has many remarkable properties.  A concise summary is given in \cite[\S 4]{BF}; we will simply state those which are of use to us.  First, $\eta$ is a $\pi_1M$-equivariant homeomorphism and can be induced from a continuous self-map $\eta_M$ of the underlying manifold $M$ \cite{Barbot,Fenley}.   Barbot \cite[Theorem 3.4]{Barbot} showed, using an averaging argument, that this map $\eta_M$ can actually be taken to be a homeomorphism of $M$.   An alternative description of $\eta_M$ map is given in \cite[Prop 7.4 ii)]{ThurstonSlithering}.  
Futhermore, if some element of the fundamental group fixes a point $o$ of the leaf space, then it also fixes $\eta^k (o)$ for all $k$, and the corresponding periodic orbits of the flow are freely homotopic. It is not hard to see that the converse is also true: any two periodic orbits of a skew-Anosov flow that are freely homotopic are related by some power of the map $\eta$ on the flow space. We note this fact for later use.
\begin{proposition}[see \cite{Barbot,Fenley}]\label{prop:freely_homotopic}
Let $\alpha,\beta$ be freely homotopic orbits of a skew-Anosov flow with orientable splitting on a closed manifold $M$. Then $\beta = \eta_M^k (\alpha)$ for some integer $k$, where $\eta_M$ is a homeomorphism of $M$ that induces the map $\eta$ on the flow space. 
\end{proposition}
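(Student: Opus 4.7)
The plan is to use the free homotopy between $\alpha$ and $\beta$ as a geometric object: lift it to a $\pi_1M$-equivariant map, project to the flow space $\mathcal{O}$, and use the result to build a chain of ``lozenges'' whose corners are successive $\eta^{\pm 1}$-iterates of a lift of $\alpha$.

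First, by the standard correspondence between periodic orbits of a flow and conjugacy classes in $\pi_1M$, free homotopy of $\alpha$ and $\beta$ means the primitive elements they represent are conjugate. After replacing $\beta$ by a conjugate lift I may assume both $\alpha$ and $\beta$ have lifts in $\tilde M$ invariant under a single element $\gamma \in \pi_1M$, projecting to fixed points $o_\alpha, o_\beta \in \mathrm{Fix}(\gamma) \subset \mathcal{O}$. It therefore suffices to show $o_\beta = \eta^k(o_\alpha)$ for some $k \in \Z$; the equality $\beta = \eta_M^k(\alpha)$ in $M$ then follows from the defining property of $\eta_M$.

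Next, adopting the strip model $\mathcal{O} = \{(x,y)\in\R^2 : |x-y| < 1\}$ of Example \ref{ex:skew_Anosov}, in which stable and unstable leaves are horizontals and verticals and $\eta(x,y) = (y+1,x+1)$, lift the free homotopy $H: S^1 \times [0,1] \to M$ to $\tilde H: \R \times [0,1] \to \tilde M$, invariant under $\gamma$-translation on the first factor and with its two boundary rays projecting to the chosen lifts of $\alpha$ and $\beta$. Projecting to the flow space yields a $\gamma$-equivariant continuous map $\R \times [0,1] \to \mathcal{O}$ that collapses the two boundary rays to $o_\alpha$ and $o_\beta$; passing to the quotient by $\gamma$ gives a continuous path in $\mathcal{O}/\langle\gamma\rangle$ joining the images of $o_\alpha$ and $o_\beta$.

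The heart of the argument, and the main expected obstacle, is to subdivide this path into a finite chain of rectangular ``lozenges'' bounded by stable and unstable half-leaves. Using the $C^1$ regularity of both foliations guaranteed by Proposition \ref{prop:Anosov_properties}(1), the path can be perturbed so as to meet each stable and unstable leaf transversely (and only finitely many within a fundamental domain); the resulting combinatorics of crossings determine a sequence of lozenges whose two ``interior'' corners are, by the very definition of $\eta$ as the intersection of the stable leaf through an upper-boundary point and the unstable leaf through a lower-boundary point, related by $\eta^{\pm 1}$. Successive corners along the chain are therefore iterates $\eta^k(o_\alpha)$, and the chain terminates at $o_\beta = \eta^k(o_\alpha)$. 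The technical care required is to ensure that no lozenge is degenerate, that each corner lies in $\mathrm{Fix}(\gamma)$, and that the chain stays inside $\mathcal{O}$; the skew (rather than $\R$-covered product) structure of the flow is essential here, since it is precisely this condition that guarantees the ``upper'' and ``lower'' ideal boundary points used in each lozenge construction are well-defined.
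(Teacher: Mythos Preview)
The paper does not prove this proposition; it is quoted from Barbot and Fenley with the remark that the converse direction ``is not hard to see.'' Your first paragraph correctly reduces the statement to showing that any two fixed points $o_\alpha,o_\beta$ of a common element $\gamma\in\pi_1M$ in the orbit space $\mathcal{O}$ lie in the same $\eta$-orbit, and this is indeed where the standard argument begins.

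The gap is in your third paragraph. Once $o_\alpha,o_\beta\in\Fix(\gamma)$, the free homotopy has served its purpose; projecting it to a path in $\mathcal{O}$ adds nothing, and your proposed lozenge construction from that path does not work as stated. Transversality of an arbitrary path to the stable and unstable foliations produces crossing points, but there is no reason these crossings occur along $\gamma$-fixed leaves, so the ``corners'' you build are not in $\Fix(\gamma)$ and are not $\eta$-iterates of $o_\alpha$. You flag exactly this point (``each corner lies in $\Fix(\gamma)$'') as requiring ``technical care,'' but it is not a technicality: it is the entire content of the proposition, and nothing in your path construction supplies it. The actual Barbot--Fenley argument is dynamical rather than path-combinatorial: since $\eta$ commutes with $\gamma$, the set $\{\eta^k(o_\alpha):k\in\Z\}$ already lies in $\Fix(\gamma)$, and one then uses the Anosov contraction/expansion along weak leaves to show that each $\gamma$-fixed stable leaf in $\mathcal{O}$ carries a unique $\gamma$-fixed orbit and that any two fixed points of $\gamma$ span a lozenge whose opposite corners are $\eta$-related. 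The chain of lozenges is read off from the structure of $\Fix(\gamma)$ itself, not manufactured from an auxiliary path; the $C^1$ regularity you invoke plays no role.
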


We will also need to use the following result of Barbot on minimality of the slithering action associated to a skew-Anosov flow. 
\begin{proposition}[\cite{Barbot} Theorem 2.5]\label{prop:skew_Anosov_minimal}
Any skew-Anosov flow is transitive and its associated slithering action  $\rho_s: \pi_1M \to \Homeo_+(S^1)$ is minimal.
\end{proposition}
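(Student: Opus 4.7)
The strategy is to establish transitivity of the flow first, then deduce minimality of the slithering action by using density of periodic orbits and the hyperbolic dynamics they induce on $S^1$.

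For transitivity, I would apply the Smale spectral decomposition to the non-wandering set $\Omega \subseteq M$ of $\Phi_t$, obtaining a finite partition $\Omega = \Omega_1 \sqcup \cdots \sqcup \Omega_k$ into basic sets, each saturated by weak stable and weak unstable leaves. If the flow is not transitive, then either $\Omega \neq M$ or $k \geq 2$; in dimension $3$ this forces (by Brunella / Mosher type results) the existence of a closed invariant set properly saturated by the weak foliations $\F^s$ and $\F^u$, or an embedded transverse torus separating basic pieces. Lifting to $\tilde{M}$ and projecting to the flow space $\mathcal{O}$, such a set would give a proper, nonempty, $\pi_1M$-invariant subset of $\mathcal{O}$ that is saturated by both the horizontal and vertical foliations. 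But in the skew model, where $\mathcal{O}$ is the diagonal strip and the $\pi_1M$-action is cocompact modulo the $\eta$-symmetry, no such nontrivial doubly saturated invariant subset exists. Hence $\Omega = M$ with a single basic piece, so $\Phi_t$ is transitive.

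For minimality, granted transitivity, the Anosov Closing Lemma provides density of periodic orbits in $M$. Each periodic orbit $\alpha$ corresponds to a conjugacy class of elements $g_\alpha \in \pi_1M$; the unique weak stable leaf $L^s_\alpha$ containing the lifted orbit is fixed by $g_\alpha$, and thus gives a $\rho_s(g_\alpha)$-fixed point $[p_\alpha] \in \Lambda^s/\tau = S^1$. The Anosov expansion/contraction along $\F^u$ inside $L^s_\alpha$ translates into topologically hyperbolic dynamics of $\rho_s(g_\alpha)$ on a neighborhood of $[p_\alpha]$: one side attracts and the other repels. Density of periodic orbits then yields density of such hyperbolic fixed points in $S^1$. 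Now let $K \subseteq S^1$ be a nonempty closed $\rho_s(\pi_1M)$-invariant set with $K \neq S^1$, and let $J$ be a component of $S^1 \setminus K$. By density, $J$ contains a hyperbolic fixed point $[p_\alpha]$ in its interior, so the two endpoints of $J$ lie in $K$ and straddle $[p_\alpha]$. Iterating $\rho_s(g_\alpha)$ on the endpoint lying on the attracting side produces a sequence of points in $K$ converging to $[p_\alpha]$; since $K$ is closed this forces $[p_\alpha] \in K$, contradicting $[p_\alpha] \in J$. Hence $K = S^1$ and $\rho_s$ is minimal.

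The principal obstacle is making step one rigorous: skew-Anosov flows are defined by an abstract property of the orbit space, and extracting transitivity requires combining the spectral decomposition with a careful analysis of how basic sets interact with the $\R$-covered skew structure of $\mathcal{O}$ and with the $\eta$-symmetry. This is the heart of Barbot's Theorem 2.5 in \cite{Barbot} and is substantially more delicate than the minimality step, which is essentially formal once transitivity and the hyperbolic nature of fixed points of periodic elements are in hand.
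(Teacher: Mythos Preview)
The paper does not prove this proposition: it is stated with attribution to \cite{Barbot}, Theorem~2.5, and used as a black box. So there is no in-paper argument to compare yours against, and the question is simply whether your sketch stands on its own.

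Your outline for transitivity is reasonable and you correctly flag it as the substantive part that requires Barbot's analysis. The minimality step, however, has a genuine gap. You choose a complementary interval $J$ of the hypothetical proper invariant set $K$, locate a hyperbolic fixed point $[p_\alpha]$ of some $\rho_s(g_\alpha)$ in the interior of $J$, and then claim that iterating $\rho_s(g_\alpha)$ on an endpoint of $J$ produces a sequence in $K$ converging to $[p_\alpha]$. But since $K$ is $\rho_s(g_\alpha)$-invariant and $[p_\alpha]\in J$, the homeomorphism $\rho_s(g_\alpha)$ must carry $J$ to itself; being orientation-preserving on $S^1$, it therefore \emph{fixes} both endpoints of $J$. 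Iterating on a fixed point goes nowhere, and the contradiction does not materialize. What you have actually shown is that the endpoints of $J$ are themselves fixed points of $\rho_s(g_\alpha)$, which is information, but not yet a contradiction.

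There is a much shorter route once transitivity is in hand, and it avoids any analysis of the fixed-point structure of individual elements: a transitive Anosov flow has the property that every weak stable leaf is dense in $M$ (a closed $\F^s$-saturated, flow-invariant set would be an attractor, impossible for a transitive flow). Density of each leaf in $M$ is exactly the statement that every $\pi_1M$-orbit in the leaf space $\Lambda^s\cong\R$ is dense, i.e.\ the action on $\Lambda^s$ is minimal; since this action commutes with $\tau$, the induced action on $S^1=\Lambda^s/\tau$ is minimal as well. This replaces your entire second paragraph.
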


\subsection*{Universal circles}
Let $\F^u$ and $\F^s$ denote the lifts to $\tilde{M}$ of the unstable and stable foliations of an Anosov flow.   Following Proposition \ref{prop:Anosov_properties} ii), the leaves of these foliations have a natural large-scale hyperbolic structure, hence can be compactified by a boundary at infinity.  In the case of a skew-Anosov flow, Thurston \cite{ThurstonSlithering} observed that these foliations are {\em uniform}, meaning that any pair of leaves is lie a bounded distance apart from each other, and hence the leafwise boundaries can be canonically identified: 

\begin{lemma}[Lemma 4.1 and Corollary 4.2 of \cite{ThurstonSlithering}] \label{lem:uniform_foliation}
For each pair of leaves $L$ and $L'$ of $\F^u$, and every infinite geodesic $g$ on L, there is a unique geodesic $g'$ on $L'$ at a a bounded distance from $g$.  This produces a canonical identification of the circles at infinity for all the leaves of $\F^u$.  We call the result the {\em universal circle} at infinity.   The same holds with $\F^s$ in the place of $\F^u$. 
\end{lemma}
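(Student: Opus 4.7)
The plan is to establish the lemma in two stages. First, I would show that $\tilde\F^u$ is \emph{uniform} in the sense that any two leaves lie within bounded Hausdorff distance in $\tilde M$. Second, I would use the negative curvature of individual leaves (Proposition \ref{prop:Anosov_properties}(ii)) to upgrade this to the claimed canonical geodesic correspondence.

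For uniformity, the key input is the homeomorphism $\eta_M$ of $M$ discussed after Proposition \ref{prop:skew_Anosov_minimal}, whose square $\tau_M$ lifts to a homeomorphism of $\tilde M$ realizing the leaf-space translation $\tau = \eta^2$ on the unstable leaf space $\Lambda^u$. By symmetry with the slithering construction given for $\F^s$ in Example \ref{ex:skew_Anosov}, $\Lambda^u$ is identified with $\R$ with $\tau$ acting as a translation. Compactness of $M$ forces $\tau_M$ to move points by a uniformly bounded amount, so consecutive leaves in any $\tau$-orbit are at bounded Hausdorff distance, and $\tau^n(L)$ lies within Hausdorff distance at most $nC$ of $L$. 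Within one fundamental domain of the $\tau$-action on $\Lambda^u$, the transverse strong stable foliation together with the flow provides continuous local sections between nearby leaves over compact sets; cocompactness of the $\pi_1 M$-action on the flow space then propagates this to a global bound within one period. Combining these estimates shows that any two leaves of $\tilde\F^u$ lie at bounded Hausdorff distance in $\tilde M$, with the bound depending only on the pair.

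Now, given leaves $L, L'$ of $\tilde\F^u$ at Hausdorff distance at most $C$ and a bi-infinite geodesic $g \subset L$, I would project $g$ onto $L'$ by nearest-point (or via the transverse strong stable foliation) to obtain a curve $\tilde g' \subset L'$ within distance $C$ of $g$. By Proposition \ref{prop:Anosov_properties}(ii) the leafwise metric on $L'$ is bi-Lipschitz to a constant curvature $-1$ metric and hence $\delta$-hyperbolic, so $\tilde g'$ is a quasi-geodesic in $L'$ with well-defined endpoints in $\partial_\infty L'$. Applying Lemma \ref{lem:close_to_geod} inside $L'$ produces a unique leafwise geodesic $g'$ lying within bounded distance of $\tilde g'$, and hence within bounded distance of $g$. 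Uniqueness of the $g'$ appearing in the lemma then follows from negative curvature: any two bi-infinite geodesics in $L'$ at bounded distance must share both endpoints at infinity and therefore coincide. The assignment $g \mapsto g'$ is continuous, and the symmetric construction $L' \to L$ yields its inverse, giving mutually inverse homeomorphisms $\partial_\infty L \cong \partial_\infty L'$. These identifications are canonical and compose compatibly for triples of leaves, defining the universal circle at infinity. The same argument applied to the weak stable foliation gives the parallel statement for $\F^s$.

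The main obstacle is rigorously establishing uniformity, in particular the propagation from consecutive leaves (related by iterates of $\tau$) to arbitrary pairs. While the observation that $\tau_M$ moves points by a bounded amount is clean, passing from ``nearby in $\Lambda^u$'' to ``nearby in $\tilde M$'' when leaves are separated by a non-integer shift requires invoking the compact transversal structure provided jointly by the flow and the strong stable foliation, together with cocompactness of the $\pi_1 M$-action on the flow space.
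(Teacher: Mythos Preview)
The paper does not supply its own proof of this lemma; it is quoted directly from Thurston's preprint \cite{ThurstonSlithering} (Lemma~4.1 and Corollary~4.2 there), so there is no in-paper argument to compare against. Your two-stage strategy --- first establish that $\tilde\F^u$ is uniform, then use leafwise hyperbolicity to identify ideal boundaries --- is indeed Thurston's. Your uniformity argument via bounded displacement of a lift of $\tau_M$ is sound once one notes that the relevant lift commutes with deck transformations (this follows from $\pi_1M$-equivariance of $\eta$ on $\mathcal{O}$), so its displacement descends to the compact $M$. The intermediate-leaf case you flag as the ``main obstacle'' is actually easier than you suggest: since the foliation is $\R$-covered, any path of length $\le C$ from $x \in L$ to $\tilde\tau_M(x) \in \tau(L)$ must meet every leaf between them.

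There is, however, a genuine gap in your second stage. You assert that the projected curve $\tilde g' \subset L'$, lying within \emph{ambient} distance $C$ of the leafwise geodesic $g \subset L$, is a quasi-geodesic in the \emph{leafwise} metric of $L'$. This does not follow from what you have written: closeness in $d_{\tilde M}$ says nothing about $d_{L'}$ unless you already know the inclusion $L' \hookrightarrow \tilde M$ (or into $N_C(L')$) is a quasi-isometry. That comparability of leafwise and ambient metrics is exactly the missing ingredient, and it is a nontrivial part of what Thurston establishes; the paper itself invokes precisely this fact later in the proof of Proposition~\ref{prop:slith_rigid_local}, again citing \cite{ThurstonSlithering}. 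Your proposed alternative of projecting ``via the transverse strong stable foliation'' does not repair this: strong stable leaves are one-dimensional curves, and there is no reason a given strong stable leaf through $x \in L$ meets $L'$ at all. The natural transverse object would be the weak \emph{stable} foliation, and projecting along $\tilde\F^s$ is essentially the alternative description of the universal circle the paper sketches immediately after the lemma --- but carrying that out rigorously is Proposition~\ref{prop:universal_circle}, not Lemma~\ref{lem:uniform_foliation}.
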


An alternative way to describe Thurston's universal circle is by considering the intersection of $\F^u$ and $\F^s$.  For a fixed leaf $L$ of $\F^u$, the leaves of $\F^s$ intersect $L$ as quasi-geodesics with a common forward (using an induced orientation) endpoint, with respect to the large-scale hyperbolic structure.  Thus, the boundary of $L$, minus one point, can be identified with a subset of the leaf space of $\F^s$, and there is a natural map defined on subsets of boundaries of any two nearby leaves $L$ and $L'$ of $\F^u$ via leaves of $\F^s$.   This gives the following. 

\begin{proposition}[Prop 7.1 of \cite{ThurstonSlithering}]  \label{prop:universal_circle}
Let $S^1_u$ denote the universal circle obtained from Lemma \ref{lem:uniform_foliation}.  There is an identification of $S^1_u$ with $\Lambda^s/\tau$, under which the action of $\pi_1M$ on $S^1_u$ (i.e. as obtained from the action on the leaf space) agrees up to conjugacy with the slithering action of the foliation. 

\end{proposition}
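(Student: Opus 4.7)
The plan is to construct an explicit $\pi_1M$-equivariant homeomorphism $\bar\psi : \Lambda^s/\tau \to S^1_u$ leaf-by-leaf in $\F^u$, using Thurston's uniformity to glue local identifications.

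Fix $L \in \F^u$, carrying the large-scale hyperbolic structure from Proposition \ref{prop:Anosov_properties}(ii). As explained preceding the statement, each leaf $L^s \in \Lambda^s$ meeting $L$ does so in a single quasi-geodesic flowline (the common intersection of the weak stable and weak unstable distributions is the flow direction), and all such flowlines share a common endpoint $\xi_L \in \partial_\infty L$. Define
$$\psi_L : \{L^s \in \Lambda^s : L^s \cap L \neq \emptyset \} \longrightarrow \partial_\infty L \setminus \{\xi_L\}$$
by sending $L^s$ to the other endpoint of $L \cap L^s$. I would show $\psi_L$ is a homeomorphism: continuity via a fellow-traveling argument in the spirit of Lemma \ref{lem:fellow_travel}; injectivity because two distinct flowlines sharing $\xi_L$ cannot share their other endpoint; and surjectivity by viewing $L$ as a negatively curved plane foliated by quasi-geodesic flowlines all emanating from $\xi_L$, so that every point of $\partial_\infty L \setminus \{\xi_L\}$ is realized as the far endpoint of some flowline (readily visualized in the $\mathcal{O}$-model, where $L$ corresponds to a vertical strip $\{a\} \times (a-1, a+1)$).

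Next, I glue across $\F^u$. For any $L, L' \in \F^u$ and any $L^s$ meeting both, the quasi-geodesics $L \cap L^s$ and $L' \cap L^s$ each lie within bounded distance of $L^s$ in $\tilde M$, hence of each other, so by Lemma \ref{lem:uniform_foliation} the canonical identification $\phi_{L,L'} : \partial_\infty L \to \partial_\infty L'$ satisfies $\phi_{L,L'} \circ \psi_L = \psi_{L'}$ on the common domain. Assembling these yields a continuous map $\psi : \Lambda^s \to S^1_u$. Using the flow-space model and the formula $\eta(a,b) = (b+1, a+1)$, one checks that $\tau$ acts on $\Lambda^s \cong \R$ as $y \mapsto y+2$, and that $L_a$ meets exactly the stable leaves with $y \in (a-1, a+1)$. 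Since $\psi_{L_a}$ is a homeomorphism onto $\partial_\infty L_a \setminus \{\xi_{L_a}\}$, both limits $y \to (a \pm 1)$ must accumulate at $\xi_{L_a}$; letting $a$ vary then forces $\psi(y) = \psi(y+2)$. So $\psi$ factors as a continuous surjection $\bar\psi : \Lambda^s/\tau \to S^1_u$, which is locally a homeomorphism on the image of each $\psi_L$ and hence, being a degree-one map between circles, a global homeomorphism.

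Finally, every ingredient—the point $\xi_L$, the identifications $\phi_{L,L'}$, and the map $\eta_M$—is natural with respect to the $\pi_1M$ action on $\tilde M$, so $\bar\psi$ intertwines the slithering action on $\Lambda^s/\tau$ with the action on $S^1_u$, giving the desired conjugacy (in fact equivariant identification). The main technical hurdle is the homeomorphism claim for $\psi_L$: verifying surjectivity and identifying the common limit of $\psi_L$ at both ends of the parametrizing interval with the single point $\xi_L$ are exactly what forces the $\tau$-invariance and makes the map factor through the quotient circle. Everything else is bookkeeping once this leaf-level picture is established.
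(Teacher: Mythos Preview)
The paper does not give its own proof of this proposition; it is quoted from Thurston \cite{ThurstonSlithering}, and the paragraph preceding the statement merely sketches the idea you follow (identify $\partial_\infty L \setminus \{\xi_L\}$ with the stable leaves meeting $L$, then patch across unstable leaves).  So your overall architecture is exactly the intended one, and the model computations with $\eta(a,b)=(b+1,a+1)$ and $\tau(y)=y+2$ are correct.

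There is, however, a genuine error in your gluing step.  You assert that for $L^s$ meeting both $L$ and $L'$, the orbits $L\cap L^s$ and $L'\cap L^s$ ``each lie within bounded distance of $L^s$ in $\tilde M$, hence of each other.''  The premise is vacuous (both orbits lie \emph{in} $L^s$), and the conclusion is false: already for the geodesic flow on a hyperbolic surface, these are the tangent lifts of the $\H^2$-geodesics $[\xi,\zeta]$ and $[\xi',\zeta]$, which diverge toward their distinct backward endpoints $\xi,\xi'$, and since $\UT\H^2\to\H^2$ is a quasi-isometry they diverge in $\tilde M$ as well.  So $L'\cap L^s$ is \emph{not} the geodesic in $L'$ that Lemma~\ref{lem:uniform_foliation} associates to $L\cap L^s$, and your deduction of $\phi_{L,L'}\circ\psi_L=\psi_{L'}$ does not go through as written.

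The repair is short.  Let $\gamma'$ be the unique quasi-geodesic in $L'$ at bounded $\tilde M$-distance from $\gamma=L\cap L^s$ (this exists by uniformity of $\F^u$).  Since $\gamma$ and $L'\cap L^s$ lie in the same stable leaf, they are forward-asymptotic in $\tilde M$; hence $\gamma'$ is forward-bounded from $L'\cap L^s$ as well.  Two quasi-geodesics in the hyperbolic plane $L'$ that stay a bounded distance apart along a ray share that ideal endpoint, so $\gamma'$ and $L'\cap L^s$ have the same forward endpoint in $\partial_\infty L'$, which is exactly $\phi_{L,L'}(\psi_L(L^s))=\psi_{L'}(L^s)$.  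With this correction in place, the rest of your argument (factoring through $\tau$, degree-one on circles, equivariance) goes through.
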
 
In other words, $S^1_u$ can be thought of as the space of vertical lines (mod $\tau$) of the orbit space $\mathcal{O}$ depicted in Figure \ref{Skew_Flow}.

\subsection{Proof of Theorem \ref{thm:slith_rigid}}
In this section we use the following notion of semi-conjugacy for circle maps, as defined by Ghys in \cite{Ghys87}.   Though the terminology ``semi-conjugacy'' is now widespread, this is not the same as the standard dynamical notion of semi-conjugacy defined in Section \ref{sec:prelim}.  
To avoid confusion, we will follow \cite{MannWolff} and use the term {\em weak conjugacy} for Ghys' definition.  
\begin{definition} \label{def:semiconj}
Let $\rho_1$ and $\rho_2: \Gamma \to \Homeo_+(S^1)$ be two actions of a group $\Gamma$ on the circle $S^1 = \R/\Z$.  These actions are {\em weakly conjugate} if there is a monotone map $h: \R \to \R$ commuting with $x \mapsto x+1$, and lifts of each element $\rho_i(\gamma)$ to $\Homeo_+(\R)$ satisfying $h \circ \widetilde{\rho_1(\gamma)} = \widetilde{\rho_2(\gamma)} \circ h$. 
\end{definition}

The map $h$ in the definition above is not required to be continuous or surjective.  However, if $\rho_2$ is minimal, any weak conjugacy $h$ between $\rho_2$ and any other representation $\rho_1$ is necessarily continuous and surjective.   Note that, since $h$ commutes with integer translations, it descends to a map of $S^1$.  A map of $S^1$ so induced is called a {\em degree one monotone map}.  It is easy to verify that the surjective, degree one monotone maps of $S^1$ are precisely the orientation-preserving maps of $S^1$ which are approximable by homeomorphisms.

We divide the proof of Theorem \ref{thm:slith_rigid} into two propositions, covering first the local then the global result.  
\begin{proposition}[Local Rigidity]\label{prop:slith_rigid_local}
Let $\F^s$ be the weak stable foliation of a skew-Anosov flow $\Phi_t$ on $M^3$ with associated slithering action $\rho_s: \pi_1M \rightarrow \Homeo_+(S^1)$.  Then there exists a neighborhood $U$ of $\rho_s$ in $\Hom(\pi_1M, \Homeo_+(S^1))$ consisting of representations weakly conjugate to $\rho_s$.  
\end{proposition}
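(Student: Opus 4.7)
The plan is to adapt the strategy of Theorem~\ref{thm:local_rig} to this setting, with the weak stable foliation $\F^s$ of the skew-Anosov flow $\Phi_t$ playing the role previously played by the weak stable foliation of the geodesic flow on $\UT M$. The $C^1$-regularity of $\F^s$ from Proposition~\ref{prop:Anosov_properties}(i) supplies the regularity input required in Lemma~\ref{lem:leafwise_imm}, while the flow lines of $\Phi_t$, which by Proposition~\ref{prop:Anosov_properties}(ii) are uniform quasi-geodesics in the induced hyperbolic-type metric on each leaf of $\F^s$, will play the role of geodesics. The critical structural simplification relative to Theorem~\ref{thm:local_rig} is that the relevant foliated bundle has one-dimensional fibers, so the surface-case argument at the end of the proof of Lemma~\ref{lem:leafwise_imm} applies and the leafwise immersion may be arranged to be a homeomorphism. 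It is this homeomorphism property that will upgrade a semi-conjugacy in the sense of topological factors to a weak conjugacy in the sense of Definition~\ref{def:semiconj}.

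First, I would apply Lemma~\ref{lem:leafwise_imm} (extended as in Remark~\ref{rem:general_bundle}) to obtain, for each $\rho$ in a small neighborhood $U$ of $\rho_s$ in $\Hom(\pi_1M,\Homeo_+(S^1))$, a $\pi_1M$-equivariant homeomorphism $f_\rho$ from the suspension $E_\rho$ to $E_{\rho_s}$, covering the identity on $M$ and whose restriction to each horizontal leaf is $C^1$-close, on balls of a fixed radius, to the inclusion of a horizontal leaf of $E_{\rho_s}$. Next, I would define an unparametrized quasi-geodesic foliation $\Fqg_\rho$ on the horizontal leaves of $\tilde{M} \times S^1$ by pulling back through $f_\rho$ the foliation coming from lifted flow lines of $\Phi_t$ along leaves of $\tilde{\F}^s$. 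The local-to-global principle (Lemma~\ref{lem:local_to_global}) ensures that the resulting curves are uniform quasi-geodesics, and they therefore possess well-defined endpoints in the universal circle $S^1_u$ via the canonical identification supplied by Lemma~\ref{lem:uniform_foliation}. Following the template of Section~\ref{sec:endpoint_maps}, this yields a continuous, $\pi_1M$-equivariant forward endpoint map $e^+_\rho$ with values in $S^1_u$.

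Next, mirroring the argument of Proposition~\ref{prop:positive_const}, I would show that $e^+_\rho$ is constant on each horizontal leaf, so that it descends to a continuous $\pi_1M$-equivariant map $h: S^1 \to S^1_u$ satisfying $h \circ \rho(\gamma) = \rho_s(\gamma) \circ h$ for all $\gamma \in \pi_1M$. In place of the uniform convergence group property used for the boundary action, the substitute inputs here should be the minimality of $\rho_s$ (Proposition~\ref{prop:skew_Anosov_minimal}) together with the extra symmetries of the flow space coming from the map $\eta$ (Proposition~\ref{prop:freely_homotopic}); these should jointly allow one to promote a hypothetical non-constancy of $e^+_\rho$ on a horizontal leaf to a contradiction with the local transversal structure of $\Fqg_\rho$, exactly as in the endgame of Proposition~\ref{prop:positive_const}. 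Finally, because $f_\rho$ is a homeomorphism and therefore preserves the cyclic order on fibers, the induced map $h$ must have degree one and be monotone; combined with the equivariance established above, this is precisely the weak conjugacy required by the conclusion.

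The step I expect to be the main obstacle is this constancy argument, the analog of Proposition~\ref{prop:positive_const}. In the boundary action setting, that proof leans heavily on the full strength of the uniform convergence group property, which is not directly available for general skew-Anosov slitherings. Extracting a suitable replacement from the dynamics of the skew-Anosov flow, in particular from the minimality of the slithering action combined with the rigidity provided by the map $\eta$ on the flow space, is the delicate technical heart of the argument.
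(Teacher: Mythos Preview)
Your overall strategy is right, but there is a genuine structural gap that the paper addresses explicitly and that your proposal glosses over: a dimension mismatch between the suspension bundle and the manifold carrying the flow. In Theorem~\ref{thm:local_rig}, the suspension $E_{\rho_0}$ is canonically identified with $\UT M$, which is simultaneously the total space of the bundle \emph{and} the space where the geodesic flow and its weak stable/unstable foliations live. In the skew-Anosov setting this coincidence fails: the flow $\Phi_t$ and the foliations $\F^s,\F^u$ live on the $3$-manifold $M$, while the suspension $E_{\rho_s}$ is a $4$-dimensional $S^1$-bundle over $M$ whose horizontal leaves are copies of $\tilde{M}$. Your sentence ``pulling back through $f_\rho$ the foliation coming from lifted flow lines of $\Phi_t$ along leaves of $\tilde{\F}^s$'' does not parse as written, because flow lines are quasi-geodesics only for the leafwise hyperbolic metric on $2$-dimensional leaves of $\F^u$ (Proposition~\ref{prop:Anosov_properties}(ii)), not on the $3$-dimensional horizontal leaves of $E_{\rho_s}$. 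The paper resolves this by introducing a canonical $C^1$ section $\sigma: M \to E_{\hat\rho}$, $p \mapsto (p,\ell(p))$, where $\ell(p)\in\Lambda^s$ is the leaf through $p$; this embeds $M$ into the suspension so that the restriction of the horizontal foliation to $\sigma(M)$ is precisely $\sigma(\F^s)$, and all the quasi-geodesic analysis is then performed on $\sigma(\tilde M)$ by intersecting the perturbed foliation $\tilde\F'$ with leaves of $\tilde\sigma(\F^u)$. (The paper also works with the lifted action on $\Lambda^s\cong\R$ rather than on $S^1$ directly, since the map $\tau$ need not be $C^1$.) Without this section step, there is no coherent way to obtain a one-dimensional quasi-geodesic foliation from your $f_\rho$.

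Your identification of the constancy step as the crux is correct, but the mechanism you propose is not the one that works. The paper does not use the map $\eta$ or a convergence-group-style argument here. Instead, it uses transitivity of the flow (Proposition~\ref{prop:skew_Anosov_minimal}) to get density of periodic orbits in the orbit space $\mathcal{O}$: if the image $h_1(L')$ of some leaf were not horizontal in $\mathcal{O}$, it would intersect both the stable and unstable leaves of some periodic point, corresponding to an element $\gamma\in\pi_1M$ acting on $\mathcal{O}$ with that point as a hyperbolic fixed point. Iterating by $\gamma^n$ forces the images $\gamma^n h_1(L')$ to approach a vertical segment; passing to a limit leaf (using compactness of the leaf space mod $\tau$) then yields a leaf with vertical image, contradicting the observation that every leaf meets at least two distinct leaves of $\F^u$ and hence has nonconstant $e^-$. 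This is closer in spirit to a north--south dynamics argument than to the uniform-convergence-group argument of Proposition~\ref{prop:positive_const}, and it is worth noting that $\eta$ plays no role in it.
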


\begin{proposition}[Global Rigidity]\label{prop:slith_rigid_global}
Under the hypotheses above, one can in fact take $U$ to be the connected component of $\rho_s$ in $\Hom(\pi_1M, \Homeo_+(S^1))$.
\end{proposition}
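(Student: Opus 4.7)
The plan is to bootstrap Proposition \ref{prop:slith_rigid_local} to the full connected component via an open-closed argument. Let $C$ denote the connected component of $\rho_s$ in $\Hom(\pi_1M, \Homeo_+(S^1))$ and define
\[ S = \{\rho \in C : \rho \text{ is weakly conjugate to } \rho_s\}. \]
Since $\rho_s \in S$ and $C$ is connected, it suffices to show that $S$ is both open and closed in $C$.

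Closedness follows from Ghys's characterization of weak conjugacy: $\rho$ and $\rho_s$ are weakly conjugate if and only if they have the same bounded Euler class $e^b \in H^2_b(\pi_1M; \Z)$. The bounded Euler class is continuous under the compact-open topology, since its defining cocycle (built from translation numbers of lifts to $\Homeo_{\Z}(\R)$) is pointwise continuous in $\rho$ and uniformly bounded. Hence $S$ is closed as the preimage of a singleton in a Hausdorff space.

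Openness at $\rho_s$ is exactly Proposition \ref{prop:slith_rigid_local}. To extend openness throughout $S$: for any $\rho \in S$, minimality of $\rho_s$ (Proposition \ref{prop:skew_Anosov_minimal}) gives a monotone surjection $h: S^1 \to S^1$ with $h \circ \rho(\gamma) = \rho_s(\gamma) \circ h$ for all $\gamma \in \pi_1 M$. For a perturbation $\rho'$ of $\rho$, the plan is to project through $h$ to obtain a perturbation $\rho'_s$ of $\rho_s$, which is weakly conjugate to $\rho_s$ by local rigidity; transitivity of weak conjugacy then yields $\rho' \in S$. I expect the main obstacle to be precisely this step, since the closed intervals collapsed by $h$ need not be preserved by a generic perturbation $\rho'$, so the quotient $\rho'_s$ may fail to be well-defined. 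A more robust route is to exploit the rich supply of ``pivotal'' elements in $\pi_1 M$ corresponding to periodic orbits of the skew-Anosov flow: under $\rho_s$ (and hence under any $\rho \in S$) such elements act on $S^1$ with topologically hyperbolic attracting/repelling fixed-point dynamics, which persist under $C^0$-perturbation. Combined with continuity of translation numbers, this pins down the weak conjugacy class on a neighborhood of any $\rho \in S$, completing the openness argument.
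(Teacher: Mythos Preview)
Your open-closed framework and the closedness argument are both correct and match the paper's approach.  The gap is in openness away from $\rho_s$.  You correctly identify that pushing a perturbation $\rho'$ of $\rho$ through the monotone map $h$ fails, since $\rho'$ need not respect the collapsed intervals.  But your proposed workaround via ``pivotal'' hyperbolic elements is not a proof: persistence of attracting/repelling fixed points under $C^0$-perturbation tells you only that certain elements retain rotation number zero, and continuity of translation numbers gives you only that nearby representations have \emph{nearby} translation numbers, not \emph{equal} ones.  To pin down the weak conjugacy class you would need equality of rotation numbers for \emph{all} group elements, and there is no argument here that the hyperbolic elements corresponding to periodic orbits suffice to determine this, nor that elements with irrational rotation number have locally constant rotation number.

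The paper's fix is much simpler and sidesteps all of this.  Rather than pushing $\rho'$ forward through $h$, one pulls $\rho'$ back near $\rho_s$ by an honest conjugacy: since $h$ is a continuous degree-one monotone map of $S^1$, it can be $C^0$-approximated by homeomorphisms $h_0$.  For $h_0$ close enough to $h$, the conjugate $h_0 \rho' h_0^{-1}$ lies in the neighborhood $U$ of $\rho_s$ furnished by Proposition~\ref{prop:slith_rigid_local} (this uses only uniform continuity of $\rho_s(\gamma)$ on generators and the relation $h\rho' = \rho_s h$).  Local rigidity then gives a neighborhood $V$ of $h_0 \rho' h_0^{-1}$ consisting of representations weakly conjugate to $\rho_s$, and $h_0^{-1} V h_0$ is the desired neighborhood of $\rho'$.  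The key point you were missing is that approximability of monotone degree-one circle maps by homeomorphisms lets you transport the local result at $\rho_s$ to any point of $S$ by genuine conjugation.
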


The proof of the local version follows roughly the same strategy as that of Theorem \ref{thm:local_rig} in Section \ref{sec:boundary_rig}.  However, here the suspension of the action is one dimension larger than that considered there, forcing us to make use of a natural section in order to cut down a dimension.  The proof of the global result is then a quick consequence of approximability of weak conjugacy maps by homeomorphisms.  

\begin{proof}[Proof of Proposition \ref{prop:slith_rigid_local}]

Let $\F^s$ be the weak stable foliation of a skew-Anosov flow $\Phi_t$ on a closed 3-manifold $M$,  let $s: \widetilde{M} \to S^1$ be the associated slithering, and let $\rho_s: \pi_1M \to \Homeo_+(S^1)$ be the slithering action.  For clarity, we divide the proof into steps, as indicated by the paragraph headings.  

\paragraph{Setup: a canonical section}
Consider the lift of $\F^s$ to $\tilde{M}$.  As in Section \ref{sec:boundary_rig}, we abuse notation slightly and let $\F^s$ also denote the lifted foliation to $\tilde{M}$, with leaf space $\Lambda^s \cong \R$.  
By Proposition \ref{prop:Anosov_properties}, $\F^s$ is of class $C^1$, giving a $C^1$ identification $\tilde{M} \cong \R^2 \times \Lambda^s$, and an action of $\pi_1M$ on $\Lambda^s$ by $C^1$ diffeomorphisms.  As explained in our discussion earlier (see Example \ref{ex:skew_Anosov}), this action commutes with the map $\tau: \Lambda^s \to \Lambda^s$ used in defining the slithering, and $\rho_s$ is simply the induced action of $\pi_1M$ on the (topological) circle $\Lambda^s/\tau$.   Note, however, that the map $\tau$ is in general only a homeomorphism and so $\rho_s$ need not be an action by $C^1$ diffeomorphisms.  It is for this reason that we work with the lifts to $\Lambda^s$.  Let $\hat{\rho}$ denote the action of $\pi_1 M$ on the leaf space $\Lambda^s$.  This is a lift of $\rho_s$ and the holonomy of the foliation  $\F^s$ on $\tilde{M}$

Let $E = (\tilde M \times \Lambda^s) / \pi_1 M$  be the suspension of $\hat\rho$.  
Fixing notation, for $p \in \widetilde{M}$, let $\ell(p) \in \Lambda^s$ denote the leaf containing $p$.  Define a section 
 $\tilde{\sigma}: \widetilde{M} \to \widetilde{M} \times \Lambda^s$  by $p \mapsto (p, \ell(p))$.  This satisfies the $\hat \rho (\pi_1 M)$-equivariance
\[ \gamma \cdot p \mapsto (\gamma \cdot p, \hat\rho(\gamma)(\ell(p)) ) \]
so induces to a section $\sigma : M \to E$.  
Since $\hat\rho$ is a $C^1$ action, the section $\tilde{\sigma}$ (and hence also $\sigma$) are $C^1$ embeddings.  Also, $\tilde \sigma$ is transverse to the leaves $\tilde M \times \{l\}$ of the horizontal foliation on $\tilde M \times \Lambda^s$ since the composition of $\tilde \sigma$ with the projection to $\Lambda^s$ is precisely the quotient map to the leaf space $\Lambda^s$, which is a non-singular $C^1$ map. 
By definition, the leaves of $\tilde \sigma(\F^s)$ are simply the intersection of $\tilde{\sigma}(M)$ with the leaves of the horizontal foliation of the suspension $E$. This means that the $C^1$ foliation $\tilde{\sigma}(\F^s)$ is transverse to $\tilde{\sigma}(\F^u)$ in $\tilde{\sigma}(\tilde{M})$.  

\begin{figure} 
 \labellist 
  \hair 2pt
   \pinlabel $\Lambda^s$ at 150 650
     \pinlabel $\H^2$ at 800 15
  \endlabellist
\begin{center}
\includegraphics[width=3in]{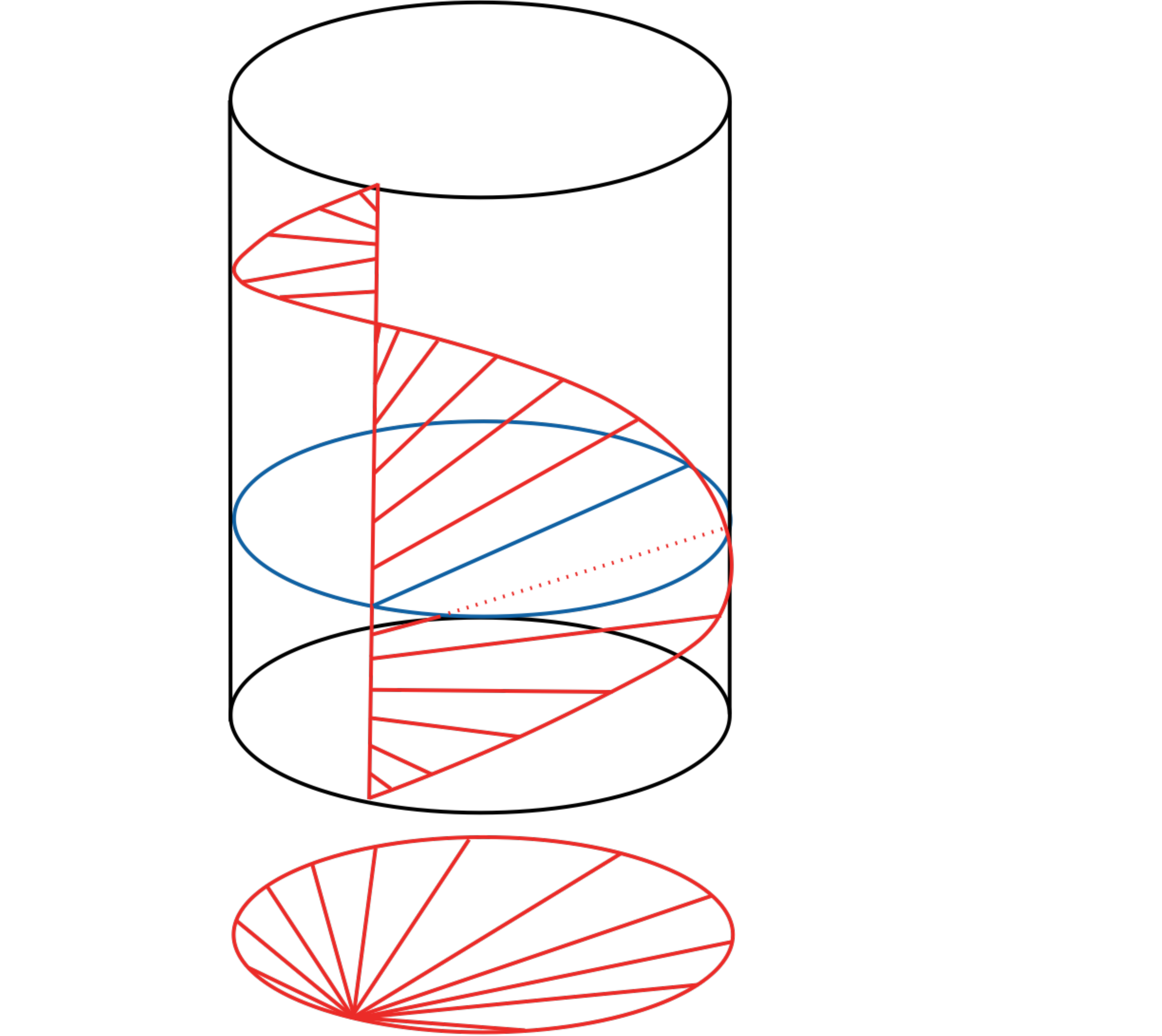}
\caption{A picture of $\tilde \sigma(\tilde{M})$ in the familiar case $M = \UT \Sigma$ as in \cite{ThurstonSlithering}.  The infinite cylinder is $\tilde \sigma (\tilde M) \cong \tilde M = \H^2 \times \Lambda^s$ i.e. the height of a point in the stack of copies of $\H^2$ corresponds to the positive endpoint of a unit tangent vector based at that point.  Horizontal planes are leaves of $\F^s$, one is shown in blue.  A leaf of $\tilde\sigma( \F^u)$ is shown in red, the hyperbolic metric on the leaf is that lifted from the projective model of $\H^2$ shown below.  In this model, one endpoint at infinity of the unstable leaf is blown up to an interval.  }\label{fig:helix}
\end{center}
\end{figure}

\paragraph{Nearby actions give nearby foliations} 
Let $\rho'$ be a small perturbation of $\rho_s$, and let $\hat \rho'$ denote the lift of $\rho'$ to $\Hom(\pi_1 M, \Homeo_\Z(\R) )$ that is a small perturbation of $\hat \rho$.  
Since $E_{\hat \rho}$ is a foliated $\R$ bundle over $M$ with $C^1$ foliation, and $\hat \rho'$ is a small perturbation of the holonomy, following the proof of Lemma \ref{lem:leafwise_imm} verbatim 
produces a homeomorphism $f_{\rho'} : E_{\hat \rho'} \to E_{\hat \rho}$ taking leaves of the suspension foliation on $E_{\hat \rho'}$ to a foliation with tangent distribution uniformly close to the horizontal distribution on $E_{\hat \rho}$.  ``Close" can be made as small as we like by choosing $\rho'$ sufficiently close to $\rho_s$.   Since $\hat \rho'$ and $\hat \rho$ are both representations into $\Homeo_\Z(\R)$, as in the last step of the proof of Lemma \ref{lem:leafwise_imm} the resulting homeomorphism will descend to a map $E_{\rho_s} \to E_{\rho'}$ of the respective fiberwise quotients.  

Let $\tilde{F}'$ denote the image of the horizontal foliation under $f_{\rho'}$.  Since  $\tilde{F}'$ has tangent distribution close to that of the horizontal distribution on $E_{\hat \rho}$, its restriction to the section $\tilde \sigma(M)$ is transverse to is transverse to $\tilde \sigma(\F^u)$.  
Abusing notation slightly, we now let $\tilde \F'$ denote the restriction of this foliation to $\tilde \sigma(M)$.  We will study the foliations $\tilde{\sigma}(\F^s)$ and $\tilde{\F'}$ on $\tilde{\sigma}(\tilde{M})$, and the induced foliations $\sigma(\F^s)$ and $\F'$ on the quotient $\sigma(M)$.

\paragraph{Leafwise quasi-geodesic foliations and the endpoint map} We adapt the line of argument carried out in Section \ref{sec:endpoint_maps}, using ``endpoint maps" to define a weak conjugacy between $\rho_s$ and $\rho'$.  
By Proposition \ref{prop:Anosov_properties} (ii), we may fix a metric on $M$ so that leaves of $\F^s$ are uniformly bi-Lipschitz equivalent to $\H^2$ and 
and the foliation $\F^u \cap \F^s$ is uniformly quasi-geodesic.  

Since the tangent distribution of $\tilde \F'$ is $C^0$ close to the horizontal in $\tilde{M} \times \Lambda^s$, for any fixed leaf $L$ of $\tilde\sigma(\F^u)$, the intersection of $\tilde \F'$ with $L$ will give a foliation of $L$ with tangent distribution close to that of the flowlines $\F^u \cap \F^s$.   Thus, under our bi-Lipschitz identification of $L$ with $\H^2$, the foliation $\tilde \F'$ is quasi-geodesic on $L$.  Recall also from Proposition \ref{prop:Anosov_properties} (ii) that flowlines of $\Phi_t$ on $L$ share a common negative endpoint, say $\xi \in \partial \H^2$.  Since the bi-Lipschitz equivalence $L \cong \H^2$ maps strong unstable leaves to horocycles and flowlines to geodesics, the leaves of $\tilde \F' \cap L$ will be nearly orthogonal to horocycles based at $\xi$.  The argument from Lemma \ref{lem:qg_line} (repeated essentially verbatim) shows that $\tilde \F' \cap L$ is uniformly quasi-geodesic.  The fact that the action of $\pi_1(M)$ on triples of ordered distinct points in $S^1_u$ is cocompact (cf.\ \cite[Proposition 7.4]{ThurstonSlithering}), together with the argument from Lemma \ref{lem:qg_line} also shows that the intersection of any leaf $L$ of $\F^u$ with $\mathcal{F}'$ is connected. 
Thus, varying $L$ we obtain a uniform quasi-geodesic foliation of $\tilde{M}$.  Denote this foliation by $\Fqg$.  Furthermore, for each leaf $L$ of $\F^u$, we have endpoint maps $e^+_L$ and $e^-_L$ taking leaves of $\Fqg$ in $L$ to their positive and negative endpoints on the ideal boundary of $L$.

By Lemma \ref{lem:uniform_foliation}, the boundaries of leaves of $\F^u$ may be identified to give a universal circle $S^1_u$, allowing us to piece together the maps $e^+_L$ and $e^-_L$ to obtain globally defined maps $e^+$ and $e^-$ from the leaf space of $\Fqg$ to $S^1_u$.  The proof of Lemma \ref{lem:cont_1} shows that, for each leaf $L$ of $\F^u$, the map $e^+_L$ is continuous.    Since $\F^u$ is a uniform foliation \cite{ThurstonSlithering}, the inclusion of any leaf into a neighbourhood $N_\varepsilon(L)$ is a uniform quasi-isometry and $L' \subset N_\varepsilon(L) $ for any sufficiently close leaf $L'$. Now nearby leaves of $\Fqg$ lying in $L,L'$ respectively of remain close on long segments, hence their end points are close in the ideal boundary $\partial_\infty N_\varepsilon(L)$ which is then canonically identified with $\partial_\infty L, \partial_\infty L' $ via the inclusion. We can now argue exactly as in Lemma \ref{lem:cont_1}, to show that the globally defined maps $e^+$ and $e^-$ are continuous on the whose manifold as well.  

\paragraph{Straightening quasi-geodesics to produce the semi-conjugacy}
To conclude the proof, we follow a modified version of the argument from  \ref{prop:positive_const}.  
Since leaves of $\F^{QG}$ are transverse to the image of the strong unstable foliation $\tilde\sigma ({\F}^{uu})$, they can be continuously homotoped via a $\pi_ 1 M$-equivariant homotopy $\tilde{h}_t$ along the one-dimensional leaves of the strong unstable foliation in such a way so that the image of each leaf of $\Fqg$ under $\widetilde{h}_1$ is the flow line of $\Phi_t$ with the same ideal endpoints in the given leaf.  
\begin{figure}[h]
\begin{center}
\includegraphics[width=3in]{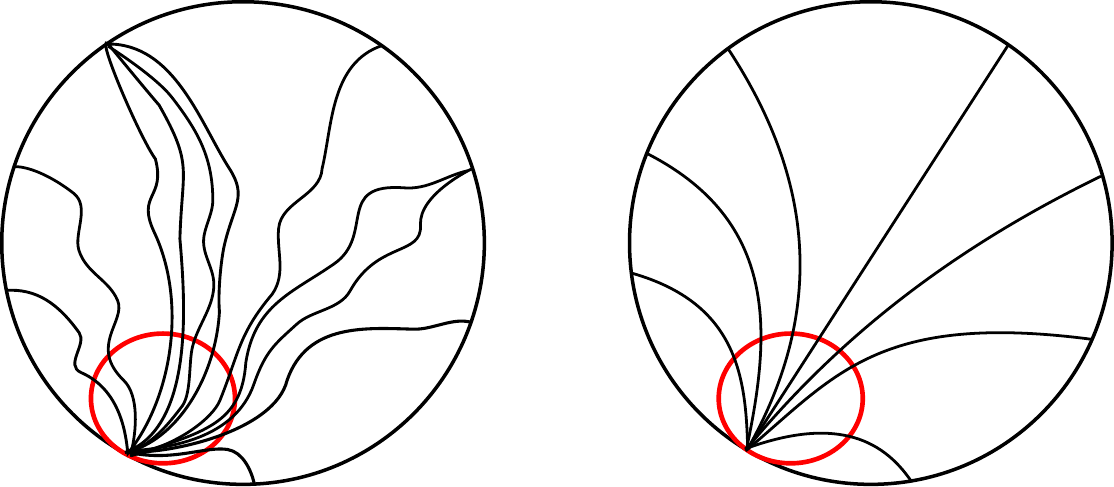}
\end{center}
\caption{Quasi-geodesics in a leaf of $\F^u$ and their images under $\tilde{h}_1$; leaves with common endpoints are identified. The red circle is a leaf of $\F^{uu}$}\label{fig:straightening}
\end{figure}

The time-one map $\tilde{h}_1$ of the homotopy descends to a map $h_1$ from the leaf space of $\F^{QG}$ to the orbit space of the flow (which we denote by $\mathcal{O}$), making the following diagram commute.
\[
  \begin{tikzcd}
    \tilde \sigma(\widetilde{M}) \arrow{d} \arrow{r}{\widetilde{h}_1} &  \widetilde{M} \arrow{d} \\
    \tilde \sigma(\widetilde{M})/\F^{QG} \arrow{r}{h_1} & \mathcal{O}.
  \end{tikzcd}
\]
We claim that for each leaf $L'$ of $\widetilde{\F}'$, its image $h_1(L')$ agrees with the image in $\mathcal{O}$ of some leaf of $\F^s$.  Equivalently, we need to show that the positive endpoint map is constant on each leaf $L'$ of $\tilde{\F}'$.  
To show this, we will use the picture given by Thurston's universal circle perspective, as stated in Proposition \ref{prop:universal_circle}.  Following this, the negative and positive endpoint maps give local (first and second) coordinates on $\mathcal{O}$.  
Fix any leaf $L'$ of $\widetilde{\F}'$.  Note first that $e^-(h_1(L'))$ is nonconstant, i.e. its image in $\mathcal{O}$ does not correspond to a vertical segment in $\mathcal{O}$.  This is simply because $L'$ intersects at least two distinct leaves of $\F^u$. We wish now to show that $h_1(L')$ is horizontal.

Suppose for contradiction that this is not the case.  
By Proposition \ref{prop:skew_Anosov_minimal}, the skew-Anosov flow $\Phi_t$ is transitive, and so its periodic points are dense.  It follows that the image of $L'$ intersects both the unstable leaf {\em and} the stable leaf of some periodic orbit.  Let $\gamma \in \pi_1M$ be the element represented by this periodic orbit, thought of as a closed curve in $M$.

\begin{figure}[h]
    \centerline{ \mbox{
\includegraphics[width=2.5in]{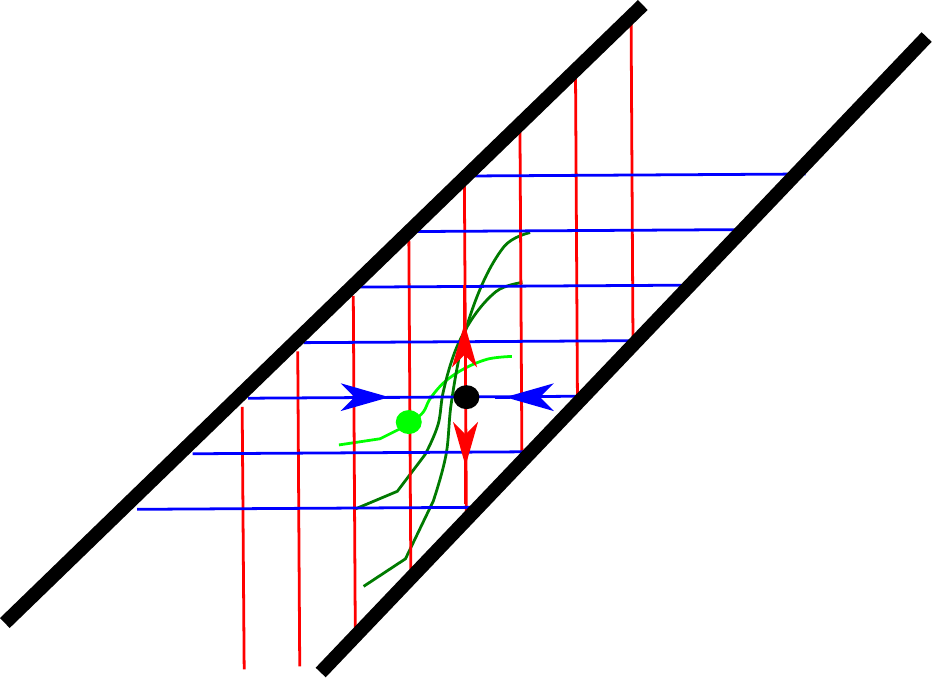}
}}
\caption{The image of a leaf in the flow space $\mathcal{O}$ which is non-horizontal is shown in light green. The black point is a periodic orbit corresponding to $\gamma \in \pi_1 M$  and the lines in darker shades of green show the first few iterates under the action of $\gamma$.}\label{Skew_Flow_not_hor}
\end{figure}

Since $\tilde{h}_1(\F^{QG})$ is a $\pi_1M$-equivariant foliation, $\gamma^n \widetilde{h}_1(L')$ are also leaves in the image of $\tilde{h}_1$. 
See Figure \ref{Skew_Flow_not_hor} for a schematic picture.
Since we are assuming that $\widetilde{h}_1(L')$ is not horizontal, then the sequence of leaves $\gamma^n \widetilde{h}_1(L')$ approaches (uniformly on compact sets) a vertical segment.  The fact that the foliation $\widetilde{\F}'$ is lifted from a flat $S^1$-bundle structure gives us compactness of the leaf space mod $\tau$, so after passing to a subsequence, in the quotient by $\tau$ the leaves $\gamma^n \widetilde{h}_1(L')$ converge to some limit leaf $L_{\infty}$.  By continuity of the map $\widetilde{h}_1$, the image of this leaf is vertical,  contradicting our earlier observation. 

We conclude that leaves of $\tilde \F'$ map to (subsets of) leaves of $\F^s$.   We now argue that leaves are sent { \em onto} leaves; in other words, the straightening map $h_1$ defines a map from the leaf space of $\tilde \F'$ to that of $\F^s$ on $\tilde{M}$.  To see this, consider first a leaf $L'$ of $\F'$ whose image contains a periodic orbit representing some $\alpha \in \pi_1 M$.   The $\pi_1 M$-equivariance of our construction means that $L'$ is also invariant under the action of $\alpha$, and its image under $h_1$ is a $\alpha$-invariant subset of a stable leaf.  We additionally know that, under the negative endpoint map, this subset contains an interval.  Thus, it must necessarily be the full leaf.  
The general case (for leaves not necessarily containing a periodic orbit) now follows from the density of periodic orbits of the flow and continuity.    

In summary, we have the following induced 
$\pi_1M$-equivariant maps, where the vertical maps 
denote the maps to the respective leaf spaces:  
\[
  \begin{tikzcd}
  \tilde \sigma (\widetilde{M})  \arrow{d} \arrow{r}{\widetilde{h}_1} &  \widetilde{M} \arrow{d} \\
\tilde \sigma(\widetilde{M})/\widetilde{\F}'   \arrow{r}{h_1} 
& \widetilde{M}/\F^{s}  = \Lambda^s 
  \end{tikzcd}
\]
Note that the map $h_1$ preserves the (weak) order of leaves in the leaf space, so it is monotone, and it is surjective and equivariant with respect to the action of  $\tau$. Thus, after quotienting out by the action of $\tau$ on $\widetilde{M}$ and $\R$ respectively, we obtain a surjective monotone map $h$ on the circle $S^1 = \R / \tau $ that provides the desired weak conjugacy.
\end{proof}

\begin{proof}[Proof of Proposition \ref{prop:slith_rigid_global}]
Let $\rho$ be a representation as in the statement of Proposition \ref{prop:slith_rigid_local}.  We show that the property of being weakly conjugate to $\rho$ is both an open and closed condition in $\Hom(\pi_1M, \Homeo_+(S^1))$.  

\paragraph{Closedness} 
It follows from work of Ghys \cite{Ghys87} and Matsumoto \cite{Matsumoto86} that, for any discrete group $\Gamma$, the closure of a conjugacy class in $\Hom(\Gamma, \Homeo_+(S^1))$ is a weak conjugacy class (called semi-conjugacy rather than weak conjugacy by these authors).  This is because weak conjugacy classes can essentially be specified by rotation numbers of elements, rotation number being a continuous function on $\Homeo_+(S^1))$, or by the integer bounded Euler class.   A detailed exposition is given in \cite[\S 2]{MannWolff}.

\paragraph{Openness} 
This follows from Proposition \ref{prop:slith_rigid_local} and approximability of weak conjugacies on $S^1$ by homeomorphisms.  (See the remark after Definition \ref{def:semiconj}.)  Let $U$ be the neighborhood given by Proposition \ref{prop:slith_rigid_local}.  
Let $\rho'$ be weakly conjugate to $\rho$ via a degree one monotone map $h: S^1 \to S^1$ satisfying $h \circ \rho' = \rho \circ h$; since $\rho$ is minimal by Proposition \ref{prop:skew_Anosov_minimal}, $h$ is continuous.
If $h_0 \in \Homeo_+(S^1)$ is a sufficiently close $C^0$ approximation to $h$, then $h_0 \rho' h_0^{-1} \in U$ so admits a neighborhood $V$ consisting of weakly conjugate representations; and $h_0^{-1} V h_0$ is the desired neighborhood of $\rho'$. 
\end{proof}

\subsection{Application: global rigidity of geometric representations} \label{sec:rig_geom}
As a first application of Theorem \ref{thm:slith_rigid}, we give a new proof of the main result of \cite{Mann}.
A second application is discussed in the next section.  Both use the following standard construction; further discussion of which can be found in \cite{Mann}.    

\subsection{Fiberwise covers of the geodesic flow} \ref{subsec:covers}
Let $\Sigma$ be a closed hyperbolic surface.  Then $\Sigma = \H^2/\rho_0(\pi_1\Sigma)$ where $\rho_0$ is an embedding as a cocompact Fuchsian sugroup of $\PSL_2(\R)$.  The action of $\rho_0(\pi_1\Sigma) \subset \PSL_2(\R)$ on $\partial_\infty \H^2 = S^1$ by M\"obius transformations gives a realization of the boundary action of $\pi_1\Sigma$.  
As in Example \ref{ex:skew_Anosov}, the corresponding suspension foliation of this representation can be naturally identified with the weak stable foliation of the geodesic flow.   

Lifts of $\rho_0$ to the extension $\Z/k\Z \to\PSL^{(k)}_2(\R) \to \PSL_2(\R)$ are precisely the holonomy representations of the weak stable foliations of the possible lifts of the geodesic flow to a $k$-fold fiberwise cover of $M \to \UT\Sigma$. 
Such lifts exist if and only if $k$ divides the Euler characteristic $\chi (\Sigma)$; in which case for a genus $g$ surface there are $k^{2g} = |\Hom(\pi_1\Sigma, \Z/k\Z)|$ distinct lifts.   These lifts can be also be distinguished dynamically:  thinking of $\PSL^{(k)}_2(\R) \subset \Homeo_+(S^1)$, via the natural identification of lifts of M\"obius transformations to the $k$-fold cover of $S^1$, one can classify distinct lifts by the rotation numbers of a standard generating set.  The images of a standard generator under different lifts differ by rigid rotations 
through angles that are multiples of $2\pi/k$.

\begin{remark} \label{rem:rotation_number}
Topologically speaking, the effect of modifying the action of a generator $\alpha$ by a rotation is to modify the closed orbits of the lifted flow that project to $\alpha$ under the map $\pi_1(M) \to \pi_1(\Sigma)$.     In detail, that some standard generator $\gamma$ for $\pi_1 \Sigma$ has image $\hat \rho(\gamma) \in \Homeo_+(S^1)$ with rotation number $2 \pi n/k$, means precisely that in the suspension of $\hat \rho$, the projection to the $S^1$ fiber of the horizontal lift of $\gamma^n$ to a closed orbit, considered as a map $S^1 \to S^1$, has degree $k$.   We will use this perspective again in the proof of Theorem \ref{thm:InequivalentAnosov}.  
\end{remark} 

The following is the main result of \cite{Mann} (reproved using a different argument by Matsumoto in \cite{Matsumoto16}).  
By quoting Theorem \ref{thm:slith_rigid}, we may give another, shorter independent proof.  

\begin{theorem}[Mann \cite{Mann}]\label{cor:Mann}
Let $\Sigma$ be a surface of genus $g\geq 2$, and $\rho : \pi_1\Sigma \rightarrow \PSL_2(\R) \subset \Hom(\pi_1\Sigma, \Homeo_+(S^1))$ an embedding as a cocompact Fuchsian group. Consider any lift $\hat{\rho}$ of this action to the $k$-fold cover of $S^1 \stackrel{z^k} \longrightarrow S^1$:
\[
  \begin{tikzcd}
   & \Homeo_+^{(k)}(S^1) \arrow{d}{z^k} \\
    \pi_1\Sigma\arrow{ur}{\hat{\rho}}  \arrow{r}{\rho} & \Homeo_+(S^1). 
  \end{tikzcd}
\]
Then the connected component of $\hat{\rho}$  in $\Hom(\pi_1\Sigma, \Homeo_+(S^1))$ is a single weak conjugacy class.
\end{theorem}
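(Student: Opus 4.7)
The plan is to realize $\hat\rho$ as (the pullback of) the slithering action of a skew-Anosov flow, and then invoke Theorem \ref{thm:slith_rigid}. Recall from Section \ref{subsec:covers} that the Fuchsian representation $\rho$ coincides with the slithering action associated to the weak stable foliation of the geodesic flow on $\UT\Sigma$, which is the prototypical skew-Anosov flow. For $k$ dividing $\chi(\Sigma)$, let $M \to \UT\Sigma$ be the $k$-fold fiberwise cover over $\Sigma$, and let $\Phi_t$ be the lifted geodesic flow. Since $M$ shares its universal cover with $\UT\Sigma$, and the skew property is a feature of the orbit space in the universal cover, $\Phi_t$ is again skew-Anosov.

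Next, I would verify that passing to the $k$-fold fiberwise cover has the effect of replacing the generator $\tau$ governing the slithering of $\widetilde{M}$ by $\tau^k$, so that $\Lambda^s / \tau^k$ is the $k$-fold cover of the original universal circle. The slithering action of $\pi_1 M$ on this circle factors through the quotient $\pi_1 M \twoheadrightarrow \pi_1 \Sigma$ (since the center of $\pi_1 M$ is absorbed into the $\tau^k$-quotient), and the resulting $\pi_1\Sigma$-action is precisely $\hat\rho$. This identification is exactly the one already encoded in the correspondence between lifts and $\PSL_2^{(k)}(\mathbb{R})$-representations described in Section \ref{subsec:covers}.

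Having identified $\hat\rho \circ \pi$ (where $\pi : \pi_1 M \twoheadrightarrow \pi_1 \Sigma$) with a slithering action of a skew-Anosov flow on $M$, Theorem \ref{thm:slith_rigid} says that its connected component in $\Hom(\pi_1 M, \Homeo_+(S^1))$ is a single weak conjugacy class. The map $\pi^* : \Hom(\pi_1 \Sigma, \Homeo_+(S^1)) \to \Hom(\pi_1 M, \Homeo_+(S^1))$ given by precomposition is continuous (and injective, since $\pi$ is surjective), so the connected component of $\hat\rho$ is mapped into the connected component of $\hat\rho \circ \pi$. Any $\rho'$ in this connected component is therefore weakly conjugate to $\hat\rho \circ \pi$ as $\pi_1 M$-actions via some monotone $h : S^1 \to S^1$, and the resulting commutation relations involve only the images under $\pi$; since $\pi$ is surjective, $h$ also weakly conjugates $\rho'$ and $\hat\rho$ as $\pi_1 \Sigma$-actions.

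The only real work is in the bookkeeping of the first two paragraphs: confirming that the lifted flow is skew-Anosov and that its slithering circle is the correct $k$-fold cover of the original, with the slithering $\pi_1 M$-action matching $\hat\rho \circ \pi$. This is a direct consequence of the fact that covering maps preserve the orbit space, the $\F^s$, $\F^u$ foliations, and the $\eta$-involution that underlies the slithering construction, so it should go through without surprise once the relevant covers are set up carefully.
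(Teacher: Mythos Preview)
Your approach is essentially identical to the paper's. Both realize $\hat\rho$ as the slithering action of the lifted geodesic flow on the $k$-fold fiberwise cover $M$, invoke Theorem~\ref{thm:slith_rigid} to get rigidity of the $\pi_1M$-action, and then descend to $\pi_1\Sigma$ using that the center $\langle z\rangle$ acts trivially (you phrase this via precomposition with the surjection $\pi:\pi_1M\to\pi_1\Sigma$, the paper phrases it as extending any nearby $\pi_1\Sigma$-representation to $\pi_1M$ by sending $z$ to the identity; these are the same observation). The one place your write-up is slightly more explicit is in flagging that the identification of the slithering circle for $M$ with the $k$-fold cover of the original universal circle (your claim that $\tau$ is effectively replaced by $\tau^k$) needs verification; the paper absorbs this into ``it is easily verified from the definitions.''
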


\begin{proof}
Equip $\Sigma$ with a hyperbolic metric and let $\UT\Sigma$ be its unit tangent bundle.  The geodesic flow on $\UT\Sigma$ is skew-Anosov, so determines a slithering action $\rho_s: \pi_1(\UT\Sigma) \to \Homeo_+(S^1)$.  We consider lifts $\hat{\rho}$ of $\rho = \rho_s$ as per our discussion above.   Each lift $\hat{\rho}$ is the holonomy of the lift of the weak stable foliation of the geodesic flow to a $k$-fold fiberwise cover of $\UT\Sigma$.  Let $M$ be such a $k$-fold cover, so $\pi_1M$ sits in a central extension 
\[
1 \longrightarrow \Z = \langle z \rangle \longrightarrow \pi_1M \longrightarrow \pi_1\Sigma \longrightarrow 1.
\]
The lift of geodesic flow to $M$ is also skew-Anosov, so has a slithering $\rho_{s_k}: \pi_1M \to \Homeo_+(S^1)$.   It is easily verified from the definitions that these representations satisfy $\rho_{s_k}(z) = \id$, so descend to representations $\pi_1\Sigma \to \Homeo_+(S^1)$, which are precisely those appearing in the statement of Theorem \ref{cor:Mann}.   Theorem \ref{thm:slith_rigid} states that the representation $\rho_{s_k}$ is globally rigid in $\Hom(\pi_1M, \Homeo_+(S^1))$.   This now implies rigidity of the surface group action obtained by restricting $\rho_{s_k}$ to $\pi_1(\Sigma)$, since any element of its connected component in $\Hom(\pi_1\Sigma, \Homeo_+(S^1))$ can be extended to a representation of $\pi_1(M)$ by declaring the central $\Z$ subgroup to act trivially.  
\end{proof}

A further consequence of Theorem \ref{thm:slith_rigid} is the following.  

\begin{corollary}
Let $M$ be a closed $3$-manifold admitting a skew-Anosov flow. Then the component of the space $\Hom(\pi_1M,\textrm{Homeo}_+(S^1))$ with trivial Euler class is not connected. 
\end{corollary}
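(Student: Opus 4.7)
The plan is to exhibit two representations in $\Hom(\pi_1 M, \Homeo_+(S^1))$ that both have vanishing Euler class but lie in distinct connected components: the trivial representation $\rho_{\mathrm{triv}}$ and the slithering action $\rho_s$ associated to the given skew-Anosov flow. The Euler class of $\rho_{\mathrm{triv}}$ is obviously zero, so the whole proof reduces to two claims: $e(\rho_s) = 0$, and $\rho_s$ is not in the connected component of $\rho_{\mathrm{triv}}$.

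To see that $e(\rho_s) = 0$, I would observe that the slithering map $s \colon \tilde M \to S^1$ is by its very definition $\pi_1 M$-equivariant (deck transformations act as bundle automorphisms, and the induced action on the base $S^1$ is $\rho_s$). Hence the map $\tilde M \to \tilde M \times S^1$ given by $p \mapsto (p, s(p))$ is $\pi_1 M$-equivariant for the diagonal action and descends to a continuous section $\sigma \colon M \to E_{\rho_s}$ of the suspension circle bundle. (This is essentially the $\tau$-quotient of the section $\tilde\sigma$ used in the proof of Proposition \ref{prop:slith_rigid_local}.) For an oriented $S^1$-bundle over a manifold the Euler class is exactly the primary (and only) obstruction to a continuous section, so the existence of $\sigma$ forces $e(\rho_s) = 0$.

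For the second claim, by Proposition \ref{prop:slith_rigid_global} the connected component of $\rho_s$ equals its weak conjugacy class, so it suffices to show that $\rho_s$ and $\rho_{\mathrm{triv}}$ are not weakly conjugate. Suppose for contradiction there is a degree-one monotone $h \colon S^1 \to S^1$ as in Definition \ref{def:semiconj}. Since every lift of $\rho_{\mathrm{triv}}(\gamma) = \mathrm{id}$ is an integer translation, after reducing modulo $\Z$ the intertwining relation becomes $h \circ \rho_s(\gamma) = h$ for every $\gamma \in \pi_1 M$. Thus $h$ is constant on each $\rho_s$-orbit; but by Proposition \ref{prop:skew_Anosov_minimal} the action $\rho_s$ is minimal, so its orbits are dense, and a monotone self-map of $S^1$ constant on a dense subset must be constant, contradicting the degree-one hypothesis.

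The only substantive step is the verification that $e(\rho_s) = 0$; once the section is in hand, it reduces to the standard correspondence between sections of oriented circle bundles and the Euler class. The remaining two points -- minimality of $\rho_s$ and the global rigidity of its connected component -- are already established in the main results of this section.
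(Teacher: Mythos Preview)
Your proof is correct and your approach to showing $e(\rho_s)=0$ is actually cleaner than the paper's. The paper argues that the Euler class of $\rho_s$ coincides with the Euler class of the tangent plane field $T\mathcal{F}^s$ (via the identification of $\rho_s$ with Thurston's universal circle action), and then observes that $T\mathcal{F}^s$ has a nowhere-vanishing section given by the flow direction. Your argument bypasses the universal circle entirely: the slithering map itself furnishes an equivariant map $\tilde M\to S^1$, hence a section of the flat bundle $E_{\rho_s}$, and that kills the Euler class directly. Both routes are short, but yours uses strictly less machinery.

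For the second half, you and the paper do the same thing, invoking Proposition~\ref{prop:slith_rigid_global} (the component of $\rho_s$ is its weak-conjugacy class) together with minimality (Proposition~\ref{prop:skew_Anosov_minimal}). The paper simply asserts that $\rho_s$ is not in the component of the trivial representation; you go further and actually sketch why $\rho_s\not\sim_{\text{weak}}\rho_{\text{triv}}$. One small point of care: in the paper's Definition~\ref{def:semiconj} the map $h$ is \emph{not} assumed continuous, and a step function on $\R$ (e.g.\ the floor map) does commute with integer translation while descending to a constant map on $S^1$. So ``constant contradicts degree one'' is not quite the end of the story in this terminology. The fix is immediate: either run the semiconjugacy in the other direction (with $\rho_2=\rho_s$ minimal, the paper's remark after Definition~\ref{def:semiconj} forces $h$ to be continuous and surjective, whence $\rho_s(\gamma)=\mathrm{id}$ for all $\gamma$), or observe that a step-function $h$ satisfying the intertwining relation would force its jump locus to be a nonempty finite $\rho_s$-invariant subset of $S^1$, again contradicting minimality.
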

\begin{proof}
The slithering action $\rho_s$ corresponds to Thurston's {\em universal circle action} given by compactifying leaves of $\F^s$ in the universal cover, as described above.  
The Euler class of this action agrees with the Euler class of $\F^s$, which is trivial since the tangent bundle to $\F^s$ admits a nowhere vanishing section determined by the flow. But $\rho_s$ is not in the same component as the trivial representation by Proposition \ref{prop:slith_rigid_global}, although they have the same Euler class.
\end{proof}

\section{Topologically inequivalent Anosov flows on hyperbolic manifolds} \label{sec:InequivalentAnosov}
In this section we prove Theorem \ref{thm:InequivalentAnosov} using ideas developed above.   Recall that two non-singular flows on a manifold are {\em topologically equivalent} if the one-dimensional foliations given by their flow lines are conjugate as foliations. 
We will consider examples of skew-Anosov flows obtained by lifting geodesic flows to a $k$-fold fiberwise cover of the unit tangent bundle of a hyperbolic surface (for large $k$), then performing integral Dehn surgery along a closed orbit.  (We assume that the reader has some familiarity with hyperbolic Dehn surgery e.g.\ as described in \cite{Thurston_notes}. A brief description of surgery for flows is given below in paragraph \ref{sec:Dehn}.) 

\begin{remark}[Lifts to a fiberwise covers up to topological equivalence, \cite{BaFe}] \label{rem:equiv_lifts}
Recall from Section \ref{subsec:covers} that the different lifts of geodesic flow on $\UT\Sigma$ to the $k$-fold fiberwise cover of $\UT\Sigma$ are determined by cohomology classes in $H^1(\UT\Sigma,\Z_k)$ that pair with the generator to give 1.   It is straightforward to check that the group of fiberwise rotations (i.e. smooth gauge transformations of the cover), which can be identified with the group of smooth maps $\Sigma \to U(1) = S^1$, acts transitively on this affine subspace of cohomology. Note that these equivalences change the {\em isotopy class} of the flow as the homotopy classes of periodic orbits will change. However, the mapping class group of $\UT\Sigma$ also contains the mapping class group of the base. By carefully considering the action on both the flow and the covering Barbot and Fenley \cite{BaFe} show that there are two distinct Anosov flows up to equivalence in the case that $k$ is even and only one in the case that $k$ is odd. Moreover, in this case, all equivalences can be realized by diffeomorphisms of the ambient manifold.
\end{remark}

Barbot and Fenley's result as described above implies that some extra ingredient is required to produce many inequivalent flows.  This is where Dehn surgery comes into the picture.  

\subsection*{Asymmetric Knots}  
The first ingredient is the following construction of highly asymmetric filling geodesics on surfaces.  We will later lift these to a fiberwise cover of the unit tangent bundle and perform Dehn surgery along the resulting curve.    

\begin{notation} 
Let $T$ be a one-holed torus, and $a, b$ simple, oriented curves representing standard generators of $\pi_1(T)$. 
 Let $c_{m,n}$ denote a properly embedded arc with endpoints on $\partial T$, constructed by first following a simple subarc that wraps $n$ times in the $a$ direction, and then a simple arc wrapping $m$ times in the $b$ direction, as shown in Figure \ref{fig:torus}.   The complimentary regions to $c_{m,n}$ are quadrilaterals, with the exception of one 5-gon, and one 7-gon containing an arc of $\partial T$.
\end{notation} 

\begin{figure} 
  \labellist 
  \hair 2pt
   \pinlabel $b$ at 0 150
     \pinlabel $a$ at 200 5
   \endlabellist
\begin{center}
\includegraphics[width=1.6in]{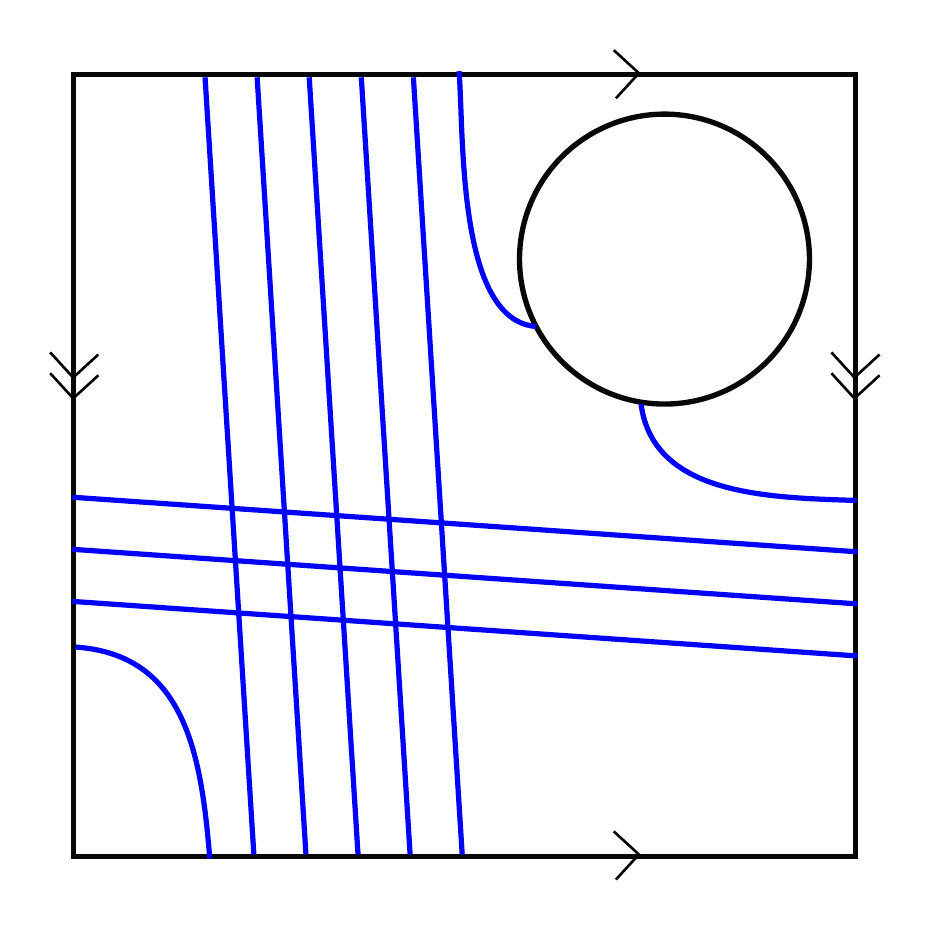}
\caption{The curve $c_{3,5}$ on $T$}\label{fig:torus}
\end{center}
\end{figure}

The next lemma says that arcs of the form $c_{m,n}$ on disjoint one-holed tori can be pieced together to give a curve on a higher genus surface that has the same self-intersection pattern as its geodesic representative in any hyperbolic metric.  For the set up, 
fix $g > 2$ and fix a decomposition of $\Sigma_g$ into $g$ pucntured tori $T_1, T_2, \ldots T_g$ and one $g$-holed sphere $S$.   Let $m_1, n_1 = 3, 5$, and for $i = 2, 3, \ldots g$, choose $m_i > n_i+1$ and $n_i > m_{i-1} + 1$.  Let $c$ be a closed curve on $\Sigma_g$ whose restriction to $T_i$ is the arc $c_{m_i, n_i}$, and such that $c$ has no points of self-intersection in $S$.   

\begin{lemma}\label{lem:geod_katie} 
For any curve $c$ as above, the following hold:
\begin{enumerate} 
\item For any hyperbolic metric on $\Sigma_g$, the geodesic representative of $c$ has the same self-intersection pattern as $c$.  
\item If $f$ is a finite order homeomorphism of $\Sigma_g$ such that $f(c)$ is ambiently isotopic to $c$, then $f$ is the identity.  The same statement holds when $c$ is replaced with its geodesic representative in any hyperbolic metric.   
\end{enumerate} 
\end{lemma}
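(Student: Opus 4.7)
The plan is to derive (1) from the classical criterion of Hass--Scott characterizing which curves realize the minimum self-intersection number in their free homotopy class, and (2) by combining Kerckhoff's Nielsen realization theorem with the built-in asymmetry of the parameters $(m_i,n_i)$.

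For (1), I would invoke the theorem of Hass--Scott which says that a closed curve on a closed hyperbolic surface is ambient isotopic to its geodesic representative (with the same combinatorics of self-intersection) if and only if no complementary region is a monogon or a bigon.  By construction the complementary regions of $c\cap T_i$ inside $T_i$ are polygons with at least four sides (quadrilaterals, one pentagon, and one heptagon).  Arranging the arcs $c\cap S$ in general position so that $S\setminus(c\cap S)$ is a disjoint union of disks each of which meets at least three arcs, the gluings along the curves $\partial T_i$ produce only complementary polygons of at least four sides in $\Sigma_g$.  Hence the Hass--Scott hypothesis holds and the geodesic representative $c^{*}$ has the same self-intersection combinatorics as $c$.

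For (2), let $f$ be a finite order homeomorphism with $f(c)$ ambient isotopic to $c$. First, by Kerckhoff's solution of the Nielsen realization problem, we equip $\Sigma_g$ with a hyperbolic metric in which $\langle f\rangle$ acts by isometries; then $f$ preserves the geodesic representative $c^{*}$ setwise.  Next, the self-intersections of $c^{*}$ partition into $g$ clusters, where cluster $i$ consists of the $m_in_i$ points originating inside $T_i$.  Our parameter choice makes the sequence $(m_in_i)_i$ strictly increasing, so the cardinalities are pairwise distinct and $f$ must preserve each cluster setwise.  A regular neighborhood of the union of arcs of $c^{*}$ spanning the $i$th cluster is a subsurface isotopic to $T_i$, so $f$ preserves each $T_i$ up to isotopy.

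Finally, the restriction $f|_{T_i}$ preserves the isotopy class rel $\partial T_i$ of the arc $c_{m_i,n_i}$.  This arc is specified combinatorially by its sequence of wrappings ``first $n_i$ turns along $a$, then $m_i$ turns along $b$'' in $\pi_1(T_i)$.  Since $m_i\neq n_i$, the arc admits no symmetry interchanging the two blocks, and standard analysis of the mapping class group of the one-holed torus (isomorphic to $\SL(2,\Z)$ acting on isotopy classes of arcs via its action on winding numbers) shows that the stabilizer of $[c_{m_i,n_i}]$ is trivial.  Thus $f|_{T_i}$ is isotopic to the identity; assembling these across $i$, $f$ is isotopic to the identity on $\Sigma_g$, and a finite order homeomorphism isotopic to the identity on a closed surface of genus $g\ge 2$ is the identity.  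The statement for $c^{*}$ in place of $c$ is then immediate from (1), since $f(c^{*})\simeq c^{*}$ forces $f(c)\simeq c$.

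The main obstacle I anticipate is the middle step: rigorously extracting the isotopy class of the separating multicurve $\bigcup_i\partial T_i$ from the intrinsic combinatorial data of $c^{*}$.  The regular-neighborhood construction needs to be made precise, for example by using the spatial clustering of self-intersections of $c^{*}$ under the chosen hyperbolic metric, or by a graph-theoretic reconstruction from the intersection pattern.
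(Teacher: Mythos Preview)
Your statement of Hass--Scott in part (1) is not quite right: the absence of monogons and bigons only guarantees that $c$ realizes the \emph{minimal} self-intersection number in its class, not that it is ambiently isotopic to the geodesic.  Two minimal representatives can still differ by triangle (type-III) moves.  The paper invokes exactly this: it observes that $c$ has no complementary monogons, bigons, \emph{or triangles}, so that disk flow can neither reduce intersections nor perform triangle moves, and hence the geodesic is reached by ambient isotopy alone.  You do note that the regions have at least four sides, so your argument is salvageable, but the criterion you state is false as written.

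For part (2) the paper takes a genuinely different and more direct route that avoids your acknowledged obstacle.  It does \emph{not} use Nielsen realization: since $f(c)$ is ambiently isotopic to $c$, there is by definition some $h\simeq\mathrm{id}$ with $hf(c)=c$ setwise, and one studies $hf$ as an automorphism of the $4$-valent graph $c\subset\Sigma_g$ together with its ribbon structure.  The paper then argues directly that this graph has trivial automorphism group: the two complementary regions meeting $S$ are a $p$-gon and a $q$-gon with $p\neq q$, so each is fixed, and then the parameter inequalities $m_i>n_i+1$, $n_i>m_{i-1}+1$ force the ``grids'' of quadrilaterals in the various $T_i$ to be pairwise non-isomorphic and internally asymmetric, pinning down every face.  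Once $hf$ fixes every complementary disk of the filling curve, the Alexander trick gives $hf\simeq\mathrm{id}$, hence $f\simeq\mathrm{id}$, hence $f=\mathrm{id}$ since it has finite order.  This completely bypasses the problem of reconstructing the subsurfaces $T_i$ from $c^*$ alone.

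Your alternative route, if it could be made to work, would be conceptually pleasant, but besides the reconstruction obstacle there is a second gap: the arc $c_{m_i,n_i}$ is \emph{not} simple, so it is not classified by a pair of winding numbers, and the $\SL(2,\Z)$-action on $H_1(T_i)$ does not straightforwardly compute its stabilizer in the mapping class group of the one-holed torus.  You would need a separate combinatorial argument for that stabilizer---which is essentially what the paper's graph-automorphism analysis provides in one stroke for the whole of $c$.
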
 

\begin{figure} 
\begin{center}
\includegraphics[width=3in]{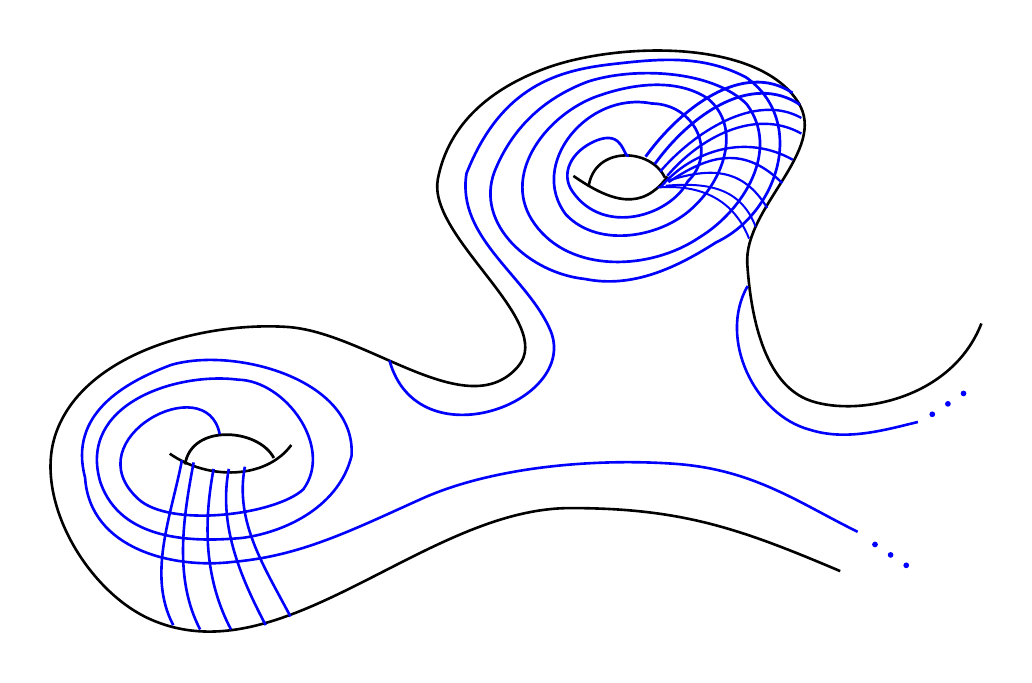}
\caption{Part of the curve $c$ from Lemma \ref{lem:geod_katie}}\label{fig:curve}
\end{center}
\end{figure}

\begin{proof}
First, note that by construction no complimentary region of $c$ is a monogon, bigon, or triangle.  
Fix any hyperbolic metric on $\Sigma_g$, and consider the geodesic representative $c_{\text{geo}}$ of $c$ in this metric.   We claim that it has the same combinatorics as the curve $c$ depicted in the figure.  To see this, we use the {\em disk flow} of Hass and Scott defined in \cite{HassScott}.  Starting with a curve in general position, this flow may change the combinatorics of a curve via $a)$ eliminating a monogon or bigon bounding a disk (thus decreasing the self-intersection number of the curve) or $b)$ moving one edge of a triangle across the opposite vertex, preserving the self-intersection number.   Hass and Scott show \cite[Thm 2.1, 2.2]{HassScott} that any curve is homotopic to a representative with minimal self-intersection number through this process, and that any two distinct representatives of a curve, each having minimal self-intersection number, are homotopic to each other through moves of type $b)$ and ambient isotopy of the surface.  
Since no complimentary regions of $c$ are monogons or bigons, moves of type $a)$ are not possible.  Thus, $c$ has minimal self-intersection number.  Since no regions are triangles, Hass and Scott's theorem implies that $c_{\text{geo}}$ (which is a geodesic, hence also has minimal self-intersection number) must be attainable from $c$ by ambient isotopy of $\Sigma_g$.  This proves (1).  

For the second assertion, suppose that $f$ is a finite order homeomorphism of $\Sigma_g$, and $h$ a homeomorphism isotopic to identity such that $hf(c) = c$, setwise.  Then $hf$ induces an automorphism of the graph on $\Sigma_g$ formed by the image of $c$.   We claim that this graph has no nontrivial automorphisms.   To see this, note that (with appropriate choice of orientations on the tori $T_i$), the two complimentary regions to $c$ that intersect $S$ are a $p$-gon and $q$-gon for $p \neq q > 5$, so much each be preserved.  The choice of $n_i > m_{i-1}+1$ and $m_i > n_i+1$, ensures that the ``grids" of quadrilaterals which distinguish the torus subsurfaces have no nontrivial symmetries and cannot be permuted, from which one deduces inductively that each complimentary region is fixed.  We leave the details as an elementary exercise.  
Thus, $hf$ induces a trivial graph automorphism.   Since $hf$ is isotopic to the finite order homeomorphism $f$, and preserves each complimentary region of the filling curve $c$, it must be isotopic to the identity,  as follows from the standard Alexander trick argument, hence $f$ is the identity.
This argument only relied on the combinatorics of $c$ and its complimentary region, so by the first assertion it also holds when $c$ is replaced by a geodesic representative. 
\end{proof}

The next step is to show that removing the geodesic representative of such a curve $c$ from the 3-manifold $\UT\Sigma$, or removing a lift of it to a fiberwise cover of $\UT\Sigma$, gives a  $3$-manifold which admits a complete hyperbolic structure.  This comes from the following folkloric result.  

\begin{lemma}[Calegari/Folklore]\label{lem:Calegari_Folklore}
Let $c \subset \Sigma$ be a closed, filling geodesic in a hyperbolic surface.  Then the complement of its image in $\UT\Sigma$ is irreducible and atoroidal.  More generally, if $M \to \UT\Sigma$ is a $k$-fold fiberwise cover, and $K$ a connected component the preimage of $c$ in $M$, then $M-K$ is irreducible and atoroidal.
\end{lemma}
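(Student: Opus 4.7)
The plan is to handle irreducibility by a standard 3-manifold argument and atoroidality by combining the classification of essential surfaces in Seifert-fibered spaces with the filling hypothesis on $c$. I give the argument for $M = \UT\Sigma$; the case of a finite fiberwise cover $M \to \UT\Sigma$ is essentially identical, because such a cover is again a Seifert-fibered circle bundle over $\Sigma$ with $\R^3$-universal cover, and the filling hypothesis on $c$ is preserved. For irreducibility, $\UT\Sigma$ and each of its fiberwise covers carries an $\widetilde{\SL}_2(\R)$-geometry and is therefore irreducible; it then suffices to show that $K$ is not contained in a $3$-ball in $M$, which holds because $K$ projects to the filling (hence essential) closed geodesic $c$ and therefore represents a nontrivial element of $\pi_1(M)$. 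Any embedded $2$-sphere in $M - K$ must bound a $3$-ball in $M$ by irreducibility, and the ball cannot contain $K$, so it lies inside $M - K$.

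For atoroidality, let $T \subset M - K$ be an embedded incompressible torus. I split into cases based on whether $T$ is also incompressible in the ambient Seifert-fibered manifold $M$. If so, the classification of essential surfaces in Seifert-fibered spaces lets me isotope $T$ in $M$ to be either vertical (a saturated union of Seifert fibers) or horizontal (transverse to all fibers). In the vertical case $T = \pi^{-1}(\alpha)$ for some essential simple closed curve $\alpha \subset \Sigma$; each intersection point of $\alpha$ with $c$ contributes a full fiber to $T$ and the corresponding tangent vector of $c$ to $K$, so $T \cap K = \emptyset$ forces $\alpha \cap c = \emptyset$, contradicting fillingness. In the horizontal case $T$ is a finite branched cover of $\Sigma$, yielding $0 = \chi(T) = n \cdot \chi(\Sigma) < 0$, another contradiction.

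The remaining case --- in which $T$ compresses in $M$ but remains incompressible in $M - K$ --- is the main obstacle. Here any compressing disk $D$ for $T$ in $M$ must meet $K$ transversely; compressing $T$ along $D$ produces a $2$-sphere that bounds a $3$-ball by irreducibility of $M$, and reconstructing $T$ shows that $T$ bounds a solid torus $V \subset M$ containing $D$. Since $K$ is connected and $T \cap K = \emptyset$ while $D \cap K \neq \emptyset$, the entire knot $K$ lies inside $V$. To conclude, I would argue that $K$ must be isotopic to the core of $V$, so that $T$ is parallel to $\partial N(K)$ and hence inessential in $M - K$: this is where the filling hypothesis does its real work. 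Specifically, choosing $D$ to minimize $|D \cap K|$ and projecting the triple $(V, D, K)$ down to $\Sigma$ via the Seifert fibration, any deviation of $K$ from the core would produce either an essential simple closed curve or an essential annulus in $\Sigma \setminus c$, both of which are ruled out since $\Sigma \setminus c$ is a disjoint union of topological disks when $c$ is filling.
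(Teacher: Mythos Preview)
The paper does not actually prove this lemma; it cites Appendix~B of Foulon--Hasselblatt and Calegari's blog, noting that the argument there carries over verbatim to finite fiberwise covers. So there is no in-house proof to compare against, only the question of whether your outline stands on its own.

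Irreducibility is fine. Your Case~1 for atoroidality (when $T$ stays incompressible in $M$) is essentially correct, with one wrinkle: the isotopy making $T$ vertical is an isotopy in $M$, not a priori in $M-K$, so you cannot directly conclude that the vertical representative misses $K$. The fix is easy: carry $K$ along the inverse isotopy to get $K'$ disjoint from $\pi^{-1}(\alpha)$, then project to see that $c$ is freely homotopic into $\Sigma\setminus\alpha$, contradicting filling.

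The genuine gap is in Case~2. You correctly arrive at $K\subset V$ with $V$ a solid torus and $T=\partial V$ incompressible in $M-K$. A $\pi_1$ argument (abelian subgroups of $\pi_1\Sigma$ are cyclic; $c$ may be taken primitive) shows that $K$ has winding number $\pm 1$ in $V$. But winding number one is strictly weaker than ``isotopic to the core'': a winding-number-one pattern can be knotted inside a solid torus, in which case $V\setminus N(K)$ is not a product and $T$ is genuinely essential in $M-K$. Your proposed remedy --- project $(V,D,K)$ to $\Sigma$ and find an essential curve or annulus in $\Sigma\setminus c$ --- is not a proof as written: the solid torus $V$ is not saturated by Seifert fibers, so there is no meaningful projection of it to the base, and you give no mechanism for producing the promised essential object from this data. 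This last step is exactly where the cited references do their real work, and it cannot be waved through in a sentence.
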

This is stated and proved in detail for the case where $M= \UT\Sigma$ in \cite[Appendix B]{Foulon_Hasselblatt}, where Foulon and Hasselblatt use it to construct examples of contact Anosov flows on hyperbolic manifolds. 
However, the proof carries over verbatim when the bundle $\UT\Sigma \to \Sigma$ is replaced by any finite fiberwise cover.  See also \cite{DannyBlog} for an alternative exposition.

For the next two lemmas we will use the following set-up. 
Let $M \to \UT\Sigma$ be a $k$-fold fiberwise cover of $\UT\Sigma$ where $\Sigma$ is a hyperbolic surface of genus $g \geq 3$.  As in Lemma \ref{lem:Calegari_Folklore} above, let $K \subset M$ be a connected component of the preimage of a geodesic $c$ in $\Sigma$, where $c$ is chosen as in Lemma \ref{lem:geod_katie}.  

\begin{lemma} \label{lem:new_slope1}
If $h \in \Homeo(M)$ is homotopic to a finite order homeomorphism of $M$, and $h(K)$ is isotopic to $K$, then $h$ is homotopic to the identity. 
\end{lemma}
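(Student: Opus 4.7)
The plan is to use the Seifert fibration structure of $M \to \Sigma$ to reduce the problem to Lemma~\ref{lem:geod_katie}(2). Since $M$ is aspherical, homotopy classes of self-homeomorphisms are determined by their action on $\pi_1(M)$ up to conjugation, so it suffices to show that a finite order homeomorphism $f$ to which $h$ is homotopic is itself isotopic to the identity.

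I would first invoke the classical theorem of Meeks--Scott (and Tollefson) that a finite-order self-homeomorphism of a Seifert fibered $3$-manifold with hyperbolic base orbifold is isotopic to one preserving the Seifert fibration; here the base is $\Sigma$ of genus $g \geq 3$, so the hypothesis applies. After isotopy we may assume $f$ is fiber-preserving, hence descends to a finite order homeomorphism $\bar f : \Sigma \to \Sigma$. The hypothesis that $h(K)$ is isotopic to $K$, together with $h \simeq f$, implies $f_\ast[K]$ is conjugate to $[K]^{\pm 1}$ in $\pi_1(M)$. Now $[K]$ projects in $\pi_1(\Sigma)$ to $[c]^d$ for some $d\mid k$ (namely the degree with which $K$ covers $\hat c \subset \UT\Sigma$), so applying $\bar f_\ast$ and using that surface groups admit unique roots, $\bar f_\ast[c]$ is conjugate to $[c]^{\pm 1}$. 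In particular $\bar f(c)$ is freely homotopic to $c$ as an unoriented curve.

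At this point both $c$ and $\bar f(c)$ have minimal self-intersection number in their homotopy class (Lemma~\ref{lem:geod_katie}(1) for $c$, and homeomorphism-invariance of self-intersection number for $\bar f(c)$), so by Hass--Scott \cite{HassScott} they are ambiently isotopic in $\Sigma$. Lemma~\ref{lem:geod_katie}(2) then yields $\bar f \simeq \id_\Sigma$. Isotoping $f$ once more, we may assume $f$ is fiber-preserving and induces the identity on the base, so $f$ is a gauge transformation of the circle bundle $M \to \Sigma$. Since $H^1(\Sigma,\Z)$ is torsion-free, any finite order gauge transformation must be a constant fiberwise rotation by a root of unity; such a rotation lies in the path-connected $S^1$-action on $M$ and is therefore isotopic to the identity. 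Hence $f$, and so $h$, is homotopic to the identity.

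The main obstacle is the first step: isotoping $f$ to a fiber-preserving map via the equivariant Seifert fibration theorem for finite cyclic actions. A secondary subtlety is the sign ambiguity $\bar f_\ast[c] = [c]^{\pm 1}$, but orientation reversal does not affect ambient isotopy of unoriented curves, so Lemma~\ref{lem:geod_katie}(2) still applies. One should also take care that the $d$-fold projection $K \to c$ does not introduce spurious symmetries, which is handled by the uniqueness of roots in $\pi_1(\Sigma)$.
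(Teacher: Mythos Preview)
Your argument is essentially correct and reaches the same conclusion, but the route differs from the paper's. The paper works entirely on the level of fundamental groups: since the center of $\pi_1 M$ is characteristic, $h$ induces an outer automorphism of $\pi_1\Sigma$; this has finite order, so Nielsen realization produces an isometry of $\Sigma$ (for \emph{some} hyperbolic metric) preserving the free homotopy class of $c$, and Lemma~\ref{lem:geod_katie}(2) kills it. The final step is then the observation that automorphisms of $\pi_1 M$ projecting to inner automorphisms of $\pi_1\Sigma$ are parametrized by $\Hom(\pi_1\Sigma,\Z)$, which is torsion-free. Your version replaces Nielsen realization by the Meeks--Scott equivariant fibration theorem to get an honest fiber-preserving finite-order map, and replaces the $\Hom(\pi_1\Sigma,\Z)$ computation by the equivalent statement about $\pi_0$ of the gauge group. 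The paper's approach is lighter: Nielsen realization for a single cyclic group is elementary, whereas Meeks--Scott is a substantial geometrization result, and working with $\text{Out}(\pi_1 M)$ avoids any need to realize maps geometrically.

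One small wrinkle: after you isotope $\bar f$ to the identity and lift this isotopy, the resulting gauge transformation need not literally have finite order, so the phrase ``any finite order gauge transformation'' is not quite what you want. What you actually use is that its \emph{homotopy class} has finite order, and since $\pi_0$ of the gauge group is $H^1(\Sigma,\Z)$ (torsion-free) and $M$ is aspherical, this class must be trivial. This is exactly the paper's $\Hom(\pi_1\Sigma,\Z)$ argument in disguise, so the fix is immediate.
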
 
Note that $h$ need not be equal to the identity, for instance it may rotate the fibers of the fibration $M \to \Sigma$.  
\begin{proof}[Proof of Lemma \ref{lem:new_slope1}]
Consider the action of $h$ on $\pi_1M$.  This action preserves the center of $\pi_1M$, which is the fundamental group of the fiber, so descends to an action $\overline{h}$ on $\pi_1\Sigma$ modulo inner automorphisms, i.e. a map $\overline{h} \in \textrm{Out}(\pi_1\Sigma)$.  Since $h$ is finite order,  Nielsen realization implies that $\overline{h}$ can be realized as an isometry for some hyperbolic structure on $\Sigma$.  Since $h$ preserves $K$ up to isotopy, the isometry realizing $\overline{h}$ preserves $c$ up to free homotopy, so preserves the geodesic representative of $c$ in this metric.  Since $c$ was chosen as in Lemma \ref{lem:geod_katie}, this isometry is in fact trivial, so $\overline{h}$ is a trivial outer automorphism of $\pi_1\Sigma$.

Now $M$ is a $K(\pi, 1)$ space, so homotopy classes of maps $M \to M$ are determined by the action on the fundamental group, and so homotopy classes of maps that induce the trivial outer automorphism of $\pi_1\Sigma$ can be identified with elements of $\Hom(\pi_1\Sigma, \Z)$, which is torsion free.  Since $h$ was assumed finite order, it must therefore be homotopic to the identity.
\end{proof} 

The following is the main technical result of this section.  

\begin{lemma} \label{lem:new_slope2}
Let $M_p$ denote the integral Dehn filling on $M - K$ of slope $p$.   After excluding finitely many slopes, the following hold.  
\begin{enumerate}
\item  The manifold $M_p$ is hyperbolic and each homeomorphism of $M_p$  preserves the homotopy class of the core curve of the filling torus.
\item Each homeomorphism $\phi \in \Homeo_+(M_p)$ that preserves the core of the filling torus determines a  homeomorphism $\phi_M$ of $M$ that preserves the isotopy class of $K$, agrees with $\phi$ away from a tubular neighbourhood of $K$, and is homotopic to the identity.    This tubular neighborhood of $K$ may be chosen arbitrarily small.  
\end{enumerate}
\end{lemma}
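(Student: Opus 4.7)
The plan is to use hyperbolic geometry to control homeomorphisms of $M_p$, and then promote the resulting symmetry to a finite-order homeomorphism of $M$ preserving $K$, to which Lemma~\ref{lem:new_slope1} applies.

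First, I will establish that $M - K$ admits a complete, finite-volume hyperbolic structure. Lemma~\ref{lem:Calegari_Folklore} already provides irreducibility and atoroidality; ruling out the Seifert-fibered case is identical to the argument in the appendix of \cite{Foulon_Hasselblatt} for the case $M = \UT\Sigma$, which adapts verbatim to finite fiberwise covers. Thurston's geometrization theorem for Haken manifolds then supplies hyperbolicity. Next I apply Thurston's hyperbolic Dehn filling theorem: for all but finitely many integer slopes $p$ the filled manifold $M_p$ is hyperbolic, and the length of the core geodesic $\gamma_p$ of the filling solid torus tends to $0$ as $|p| \to \infty$. After discarding finitely many additional slopes, $\gamma_p$ can be assumed to be the unique shortest closed geodesic in $M_p$. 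Mostow rigidity now implies part (1): every homeomorphism of $M_p$ is homotopic to an isometry, and every isometry must preserve the unique shortest geodesic.

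For part (2), given $\phi \in \Homeo_+(M_p)$ preserving the homotopy class of $\gamma_p$, I will replace $\phi$ by a homotopic isometry $\phi'$, which then preserves some small standard tubular neighborhood $T$ of $\gamma_p$. Because $\Isom(M_p)$ is finite, $\phi'$ has finite order. The crux is to analyze the restriction of $\phi'$ to the flat torus $\partial T$: this restriction preserves the meridian direction $\mu$ (the fiber of the normal disc bundle to $\gamma_p$) and the canonical longitude direction $\lambda$ (from parallel transport along $\gamma_p$ provided by the hyperbolic structure). In the basis $\{\mu,\lambda\}$ of $H_1(\partial T;\Z)$ the induced map is therefore diagonal with entries $\pm 1$, and orientation-preservation on $M_p$ forces the two signs to agree, so that $\phi'_*$ acts on $H_1(\partial T;\Z)$ as $\pm \mathrm{id}$. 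In particular every slope on $\partial T$ is preserved setwise, including the meridian $\mu_K$ of $N(K)$ under the identification $M_p - T \cong M - N(K)$. This lets me extend $\phi'|_{M - N(K)}$ across $N(K)$, using a standard linear model on $D^2 \times S^1$, to a finite-order homeomorphism $\phi_M$ of $M$ that preserves $K$ setwise and agrees with $\phi$ outside a tubular neighborhood of $K$ of arbitrarily small radius. Lemma~\ref{lem:new_slope1} then gives that $\phi_M$ is homotopic to the identity.

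The main obstacle is the boundary analysis in the previous paragraph: an isometry of a flat torus can in principle act nontrivially on $H_1$, so one has to use both the preservation of two transverse distinguished directions (meridian and canonical longitude) and orientation-preservation on $M_p$ to pin down the action as $\pm\mathrm{id}$, which is what ensures that the ``wrong'' slope $\mu_K$ recovering $M$ is preserved along with the filling slope.
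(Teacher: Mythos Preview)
Your overall strategy is sound, but the justification for the boundary action being $\pm I$ has a real gap. The ``canonical longitude from parallel transport along $\gamma_p$'' is not a well-defined class in $H_1(\partial T;\Z)$: parallel transport around $\gamma_p$ rotates the normal frame by the imaginary part of the complex length, which is generically an irrational multiple of $2\pi$, so no integral longitude is singled out this way, and there is no reason an isometry should preserve any particular longitude. What actually pins down the action is purely algebraic, using facts you already have in hand: the induced map $A$ on $H_1(\partial T;\Z)$ sends $\mu$ to $\pm\mu$ (the meridian bounds a disc in the filling solid torus), so $A$ is upper-triangular in the basis $(\mu,\lambda)$; orientation-preservation on $M_p$ forces $\det A = +1$, so both diagonal entries are $+1$ or both are $-1$; and $A$ has finite order since $\phi'$ does. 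A direct check shows that an upper-triangular matrix in $\SL_2(\Z)$ with equal diagonal entries and finite order must have vanishing off-diagonal entry, hence $A = \pm I$. There is also a smaller slip: you build $\phi_M$ from the isometry $\phi'$, but the lemma requires $\phi_M$ to agree with $\phi$ itself outside a tube. Since by hypothesis $\phi$ preserves the core setwise, it restricts to a self-homeomorphism of $M-K$ homotopic to $\phi'|_{M-K}$, hence also acts as $\pm I$ on the peripheral torus; so extend $\phi$ rather than $\phi'$ over meridian discs, and then observe that the result is homotopic to the finite-order extension of $\phi'$ before invoking Lemma~\ref{lem:new_slope1}.

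It is worth noting that the paper takes a different route at this step: rather than analysing isometries of the filled manifold $M_p$, it passes to the cusped manifold $M-K$, records for each element of the finite group $\Isom(M-K)$ acting nontrivially on the cusp lattice the eigenspaces of eigenvalue $\pm 1$, and excludes those slopes in addition to the hyperbolic Dehn filling exceptions. Any homeomorphism of $M_p$ preserving the core then restricts to a homeomorphism of $M-K$ which, by Mostow--Prasad, is homotopic to an isometry preserving the filling slope, and the slope exclusion forces that isometry to act as $\pm I$. Your approach, once corrected as above, has the advantage of avoiding this extra layer of slope exclusion.
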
 

\begin{remark} \label{rem:new_slope2}
The first point is true (after excluding finitely many slopes) whenever $K$ is obtained by lifting a filling geodesic on the surface.  The fact that $\phi_M$ in point (2) is homotopic to the identity comes from our choice of $c$ from Lemma \ref{lem:geod_katie}.   
\end{remark} 

\begin{proof}[Proof of Lemma \ref{lem:new_slope2}] 
By Lemma \ref{lem:Calegari_Folklore} the complement  $M - K$ is atoroidal and irreducible, and thus admits a complete hyperbolic metric by geometrisation.  Fix this hyperbolic metric, and consider the action of the isometry group $\Isom(M - K)$ on the fundamental group of the cusp, which we identify with $\Z \times \Z$ using generators coming from the meridian and longitude of a tubular neighborhood of $K$.   Recall that, by Mostow--Prasad rigidity, $\Isom(M - K)$ is finite.  For each of the isometries whose action on $\Z \times \Z$ is not by $\pm I$, record any eigenspace of eigenvalue $\pm 1$.  This gives us a collection of finitely many slopes, which we exclude from the possible slopes of Dehn filling.  

Since $M-K$ is hyperbolic, Thurston's hyperbolisation theorem \cite{Thurston_notes} states that, with finitely many exceptions, the result of Dehn filling $M-K$ is a hyperbolic manifold, in which the core of the filling torus is a closed geodesic of shortest length for this hyperbolic structure.   Excluding these finitely many exceptional slopes as well, we claim that $M_p$ will have all the desired properties.  

First, suppose that $\phi$ is a homeomorphism of $M_p$.  By Mostow rigidity, it is homotopic to an isometry; denote this isometry by $\psi$.  Since the core of the filling torus is a geodesic of shortest length, it is preserved by $\psi$, so $\phi$ preserves this curve up to homotopy, finishing the proof of item (1).

For the second item, observe that $\psi$ induces a homeomorphism of the cusped manifold $M - K$ preserving the (unoriented) isotopy class of a longitude given by the Dehn Surgery slope, which we identify with $K$.  Again, by Mostow Rigidity, this homeomorphism is homotopic to an isometry, and by our restriction on the choices of slope $p$, we conclude that the action on the fundamental group of the cusp is by $\pm I$. 
This means also that $\psi$ can be extended to a homeomorphism, say $\psi_M$, of $M$ by coning off over meridian discs.  
Furthermore, since an isometry of a complete hyperbolic manifold has finite order, we take an extension that also has finite order, since the extension over meridian discs preserves this property.

Suppose now that $\phi$ itself has the additional property that it preserves the core of the Dehn filling torus.  By the same argument as above, $\phi$ then induces a homeomorphism of $M-K$ preserving the isotopy class of the longitude $K$ and inducing $\pm I$ on the fundamental group of the cusp and so we can extend the action of $\phi$ over meridian discs to give a homeomorphism $\phi_M$ of $M$ preserving the isotopy class of $K$.   

Since $M$ is a $K(\pi, 1)$ space, the extension of any map over a tubular neighbourhood $N$ of $K$ is well-defined up to homotopy. Moreover, any homotopy of maps on $M - N$ extends to $M$.  In particular, if $\psi$ is the isometry homotopic to $\phi$, using the notation as above, then $\phi_M$ is homotopic to $\psi_M$, which is finite order.  
By Lemma \ref{lem:new_slope1}, we conclude that $\phi_M$ is homotopic to the identity.  
Finally, since $\phi$ preserves $K$, this homeomorphism $\phi_M$ can be obtained from $\phi$ by undoing the original Dehn surgery in an arbitrarily small neighborhood of $K$.   
\end{proof}  

\subsection{Dehn Surgery and Anosov flows} \label{sec:Dehn}
 Given any Anosov flow and a periodic orbit $\gamma$ Goodman \cite{Goodman} and Fried \cite{Fried} have described how to perform integral Dehn surgery on $\gamma$ in a manner compatible with the flow, giving the following. 
\begin{proposition}[Dehn surgery on Anosov flows  \cite{Fried,Goodman}]\label{prop:Dehn_surgery}
Let $\Phi_t$ be an Anosov flow on a manifold $M$ and let $\gamma$ be a periodic orbit. Then the manifold $M_p (\gamma)$ obtained by integral surgery of slope $p$ admits an Anosov flow that is conjugate to the original flow away from the core of the filling torus.
\end{proposition}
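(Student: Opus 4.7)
The plan is to localize the construction: I will leave $\Phi_t$ unchanged outside an arbitrarily small tubular neighborhood $N$ of $\gamma$, and replace $N$ by a new solid torus carrying a flow that matches $\Phi_t$ on the boundary but realizes the $p$-surgery. The first step is to choose good coordinates near $\gamma$ using the $C^1$ regularity of the weak stable and unstable foliations (Proposition \ref{prop:Anosov_properties}(i)): in a tubular neighborhood $N \cong D^2 \times S^1$ with $\gamma \simeq \{0\} \times S^1$, the weak stable and weak unstable leaves through $\gamma$ appear as two $C^1$ annuli meeting transversally along $\gamma$, and the flow is topologically conjugate to a local saddle-type model $\dot x = -\lambda x$, $\dot y = \lambda y$, $\dot \theta = 1$. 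The boundary torus $\partial N$ has a natural longitude given by $\gamma$ and a meridian bounding a disk in $N$.

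Next, I would realize $M_p(\gamma)$ together with its flow via a cut-and-twist operation. Following Fried, one chooses a Birkhoff annulus $A$ for $\gamma$, i.e.\ an immersed annulus with boundary along $\gamma$ such that the flow is transverse to the interior and tangent along $\partial A$; such an $A$ can be taken as a thin strip inside the weak unstable (or weak stable) annulus of $\gamma$. Cutting $M$ along $A$ produces a manifold with two annular boundary faces $A^+, A^-$ carrying the restricted flow; regluing $A^+$ to $A^-$ by the natural identification composed with $p$ full rotations around $\gamma$ produces a manifold diffeomorphic to $M_p(\gamma)$ equipped with a continuous flow that coincides with $\Phi_t$ away from a small neighborhood of $A \subset N$. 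Equivalently, one may work in the Goodman model: blow up $\gamma$ to obtain a manifold with torus boundary on which the flow extends, then perform Dehn filling of slope $p$ so that the filling disks are compatible with the induced foliations on the boundary torus.

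The main obstacle is to verify that the resulting flow on $M_p(\gamma)$ is itself Anosov, and this is the step that really uses hyperbolicity rather than just topology. My plan is to produce an invariant hyperbolic splitting by constructing invariant stable/unstable cone fields. Outside $N$ the original splitting of $\Phi_t$ is unchanged, so the only issue is in a neighborhood of the surgery locus. Using the saddle model, the cut-and-twist operation changes the first return map of the flow to a transversal by post-composing with a Dehn twist supported near the fixed point. The original return map is a hyperbolic linear-type map, and one checks by direct computation in the local model that cones narrowly aligned with the original stable/unstable directions remain invariant under the twisted return map with uniformly expanding/contracting rates, provided $N$ is chosen small enough that the contribution of the twist is dominated by the Anosov exponents acquired outside $N$. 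Propagating these cones along orbits and invoking the standard cone-field criterion then yields the continuous hyperbolic splitting, so the surgered flow is Anosov. The delicate point is uniformity of the expansion/contraction estimates for orbits that repeatedly re-enter $N$, which is handled by shrinking $N$ and exploiting that each return to $N$ is preceded by a long excursion experiencing the original Anosov rates.
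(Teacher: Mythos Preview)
The paper does not actually prove this proposition; it is stated as a citation of Fried and Goodman, followed only by a one-paragraph outline of Fried's blow-up-and-collapse construction and a discussion of how the two approaches differ. So there is no paper-proof to compare against in the usual sense, and your sketch is really an attempt to reconstruct the cited literature.

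That said, two comments on the sketch. First, you have the attributions reversed: the blow-up/Dehn-fill description you label ``the Goodman model'' is Fried's construction, while the cut-along-an-annulus-and-reglue-with-a-twist is closer to Goodman's. More substantively, the paper is careful to note that these two constructions have complementary strengths: Goodman's surgery directly yields a \emph{smooth} Anosov flow (via a cone-field argument of the kind you outline), but the surgered flow is not obviously conjugate to the original off the surgery region; Fried's construction is manifestly conjugate to the original flow away from the core, but is a priori only \emph{topologically} Anosov. The proposition as stated blends both conclusions, and the paper explicitly flags that their equivalence was only recently settled by Shannon for transitive flows. Your sketch tries to obtain both properties at once, which is precisely the subtle point.

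Second, in your cone-field step the heuristic ``shrink $N$ so the twist is dominated by Anosov exponents acquired \emph{outside} $N$,'' with ``each return to $N$ preceded by a long excursion,'' is not how Goodman's estimate actually goes. Shrinking the support of the Dehn twist makes its derivative \emph{larger}, not smaller. The correct mechanism is that orbits crossing the surgery annulus close to $\gamma$ have spent a long time winding near $\gamma$ \emph{inside} the neighborhood, accumulating hyperbolicity from the local saddle model; the bounded shear from the twist is then dominated by this accumulated expansion/contraction. The relevant hyperbolicity thus comes from the linearized dynamics near $\gamma$, not from excursions away from it, and the estimate has to be set up accordingly.
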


Goodman's original construction in \cite{Goodman} produces a smooth Anosov flow.  Fried \cite{Fried} gives an alternative construction which has, a priori, less regularity, but has the property that the dynamics of the flow after surgery are identical to those of the original flow in the complement of the periodic orbit given by the core of the Dehn filling torus.  In outline, one simply blows up $M$ along the normal bundle of the periodic orbit to obtain a manifold homeomorphic to the complement of a small open neighborhood of $\gamma$ in $M$, with a torus boundary to which the flow extends in a natural way, having four periodic orbits on the boundary.  Choosing a foliation of the torus boundary by circles transverse to the flow, such that each circle leaf intersects each of the periodic orbits in a single point and identifying each circle to a point, one obtains a flow on an integral Dehn-filling of $M - \gamma$ so that the core of the filling torus (the points obtained by collapsing circles) is a periodic orbit.

While the dynamics under Goodman's construction are somewhat mysterious, in Fried's version as described above it is obvious that any Dehn surgery can be undone, on the level of Anosov flows, by an inverse surgery. 
The drawback of Fried's construction is that the flows he constructs are not obviously genuinely Anosov, they are only {\em topologically} Anosov. 
It has been largely assumed in the literature that both these surgeries produce topologically equivalent flows, so that in both cases one obtains flows that are Anosov in the usual sense. This has only recently been settled by Mario Shannon \cite{Shannon} for transitive flows, which includes as a special case surgery of skew-Anosov flows (the case of interest to us).  This will be crucial in our construction. 

The surgery construction as well as some of its properties have been analyzed by Fenley \cite{Fenley}.  He shows in particular
that surgery on certain Anosov flows produces skew-Anosov examples.  We note this for future use.  
\begin{proposition}[Dehn surgery on skew-Anosov flows \cite{Fenley}]\label{prop:Dehn_surgery_Fenely}
If the original flow is a cover of the geodesic flow on $\UT \Sigma$, then for $p > 0$ the flow on $M_{-p} (\gamma)$ given by Dehn surgery of slope $-p$  is skew-Anosov. 
\end{proposition}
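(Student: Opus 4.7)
The plan is to combine Shannon's theorem (to promote Fried's topologically Anosov surgery to a genuine Anosov flow) with a careful analysis of how the surgery modifies the orbit space of the lifted flow, then invoke the Barbot--Fenley dichotomy. Recall that dichotomy: an oriented $\R$-covered Anosov flow on a closed $3$-manifold is either the suspension of a hyperbolic toral automorphism or is skew. So it suffices to verify that the flow on $M_{-p}(\gamma)$ is Anosov, has $\R$-covered weak stable foliation, and is not a suspension.

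The Anosov property follows from Goodman's smooth construction in \cite{Goodman}, or equivalently from Fried's topological construction combined with Shannon's result \cite{Shannon} in the transitive setting. The non-suspension conclusion will come automatically from the orbit-space description, because we will see that the orbit space of the new flow is still a proper open subset of $\R^2$ (not all of $\R^2$), which already distinguishes it from a suspension.

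The heart of the argument is the $\R$-covered property, which I would establish by tracking what Fried's surgery does to the orbit space. Start with the original skew orbit space $\mathcal{O} = \{(x,y)\in\R^2 : |x-y|<1\}$, on which $\pi_1(M)$ acts with the periodic orbit $\gamma$ corresponding to a $\pi_1(M)$-orbit of fixed points $\{g_i\cdot o\}$; each such point has four quadrants determined by the two transverse leaves of $\F^s$ and $\F^u$ through it. Fried's construction in the universal cover of $M-\gamma$ replaces each such fixed point by a small open rectangle whose sides are arcs of the four local $\F^s$ and $\F^u$ leaves (four ``corner orbits''), producing an $\R^2$-like orbit space $\widetilde{\mathcal{O}}_0$ with two foliations extending $\F^s$ and $\F^u$ across the removed neighbourhoods. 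Performing integral Dehn filling of slope $-p$, with $p>0$, corresponds on the orbit space to reidentifying boundary segments of each such rectangle by a shear that moves $p$ steps in the stable-leaf direction per full turn around the orbit. The key observation, which I would verify by a direct picture, is that for the \emph{negative} sign $-p$ this shear is orientation-consistent with the pre-existing linear order on the stable leaf space of $\mathcal{O}$: each stable leaf of $\mathcal{F}^s$ either survives intact or is extended by gluing on finitely many segments, with no branching introduced and with the global transverse order preserved. Hence the stable leaf space of the new flow is still totally ordered and separable, thus homeomorphic to $\R$, giving the $\R$-covered property. (For the opposite sign $+p$, the analogous shear would reverse relative orders of nearby leaves, creating branch points in the leaf space; this is precisely why the sign condition appears.)

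I expect the main obstacle to be the orbit-space bookkeeping in this last step: producing a clean argument that the ``$-p$ shear'' preserves the strip-like structure of $\mathcal{O}$ and does not introduce non-Hausdorff branching. In Fenley's original treatment this is done via a detailed study of the lifted foliations in $\widetilde{M_{-p}(\gamma)}$, showing that the two weak foliations remain transverse, the leaves remain planes, and the stable leaf space remains $\R$. Once this is in hand, the non-suspension property is essentially free: the orbit space is a proper subset of $\R^2$ (still bounded between two asymptotic curves that are the images of the boundary components of $\mathcal{O}$), and Barbot--Fenley then gives the desired skew conclusion.
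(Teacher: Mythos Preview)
The paper does not give its own proof of this proposition: it is stated as a citation to Fenley's work \cite{Fenley} and used as a black box in the construction of inequivalent flows. So there is no ``paper's proof'' to compare against; you have essentially sketched the argument that one finds in Fenley's original paper.

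Your outline is broadly correct---the strategy of establishing the $\R$-covered property for the surgered flow and then invoking the Barbot--Fenley dichotomy is exactly the right one---but your description of the orbit-space modification is imprecise in a way that matters. Fried's surgery does not ``replace each fixed point by a small open rectangle'' in the orbit space; rather, one blows up along the periodic orbit in $M$ and reglues, and the effect on the universal cover is that the lifts of the orbit become lines along which the stable and unstable leaf spaces are modified. The correct picture is closer to the following: for each lift of $\gamma$, the stable leaf through it is split, and the surgery slope determines how the two resulting half-leaves are reattached to neighbouring leaves. For slope $-p$ with $p>0$ (in the convention used here), the reattachment is monotone with respect to the existing transverse order, so the leaf space remains a line; for the opposite sign one genuinely creates non-separated leaves. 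You correctly identify this as the crux, but ``I would verify by a direct picture'' undersells the work involved: Fenley's argument requires tracking how the holonomy of the surgered foliation acts on transversals near the core, and checking that no two leaves become non-separated. Your acknowledgment that this is ``the main obstacle'' is accurate, and it is precisely the content of the cited result rather than something one can dispatch in a paragraph.
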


\subsection*{Constructing inequivalent Anosov flows}
Using the tools above, we now produce examples of hyperbolic 3-manifolds supporting $N$ topologically inequivalent Anosov flows, proving Theorem \ref{thm:InequivalentAnosov}. Recall that this will be done by performing Dehn surgery on fiberwise covers of the unit tangent bundle of a hyperbolic surface.

\begin{proof}[Proof of Theorem \ref{thm:InequivalentAnosov}]  
Let $\Sigma$ be a hyperbolic surface, with hyperbolic structure defined by a representation $\rho: \pi_1\Sigma \to \PSL_2(\R)$.  Fix some $k \in \N$ dividing the Euler characteristic of $\Sigma$, for concreteness one may take $k = g-1$, where $g$ is the genus of $\Sigma$.  
We will give a construction that produces a number of inequivalent skew-Anosov flows via surgery on the $k$-fold cover of $\UT\Sigma$, where that number grows linearly in $k$ (and hence can be taken as large as desired by taking $g$ large). 

Recall from Section \ref{subsec:covers} that for fixed $k$, the lifts of $\rho$ to the $k$-fold central extension of $\PSL_2(\R)$ are in bijective correspondence with $\Hom(\pi_1\Sigma, \Z/k\Z)$, parametrized by the rotation numbers of a standard set of generators of $\pi_1\Sigma$.   As discussed in Remark \ref{rem:rotation_number}, these lifts can also be distinguished by understanding the degree of projection to the fiber of horizontal lifts of closed curves from $\Sigma$ to the suspension $E_{\hat \rho}$.
Each lift defines an Anosov flow on the $k$-fold fiberwise cover of $\UT\Sigma$, (whose weak stable foliation is the suspension $E_{\hat \rho}$ of the lift $\hat \rho$ of $\rho$) but, as noted in Remark \ref{rem:equiv_lifts}, these are all topologically equivalent flows.   To produce inequivalent flows, we will use Dehn surgery along the natural lifts of a fixed filling geodesic as constructed in Lemma  \ref{lem:geod_katie}.  

\subsection*{Set-up and standing assumptions} 
Let $T_1 \subset \Sigma$ be a one-holed torus and $c$ a geodesic on $\Sigma$  as in Lemma \ref{lem:geod_katie}, with $\alpha_1, \beta_1$ the standard generators of $\pi_1(T)$.  Complete this to a standard generating set $\alpha_2, \beta_2, \ldots \alpha_g, \beta_g$ for $\pi_1\Sigma$.   We will consider lifts of $\rho$ that differ only on $\alpha_1$ and $\beta_1$, agreeing on all other generators.  

Identify the curve $c$ with an element of $\pi_1\Sigma$ and fix a lift $\hat{\rho}$ of $\rho$ to $\PSL^{(k)}_2(\R)$ such that $\hat{\rho}(c)$ has rotation number 0. Topologically, having rotation number zero corresponds to the fact that the ``horizontal lift''of $c$, meaning the pre-image of the geodesic $c$ under the covering map $E_{\hat{\rho}} \to \UT\Sigma$, has $k$ connected components, each one a periodic orbit of the lift of the geodesic flow to $E_{\hat{\rho}}$.  
The following argument shows that there are at least $\lfloor k/3 \rfloor$ choices for such lifts $\hat{\rho}$; all of which agree on $\alpha_i,\beta_i$ for $i \ge 2$: Recall that $c$, as an element of $\pi_1\Sigma$ has the form $w \alpha_1^3 \beta_1^5$ where $w$ is a word in $\alpha_2, \beta_2, \ldots \alpha_g, \beta_g$.   Varying the lifts of $\rho(\alpha_1)$ and $\rho(\beta_1)$, while preserving the chosen lifts of the other generators amounts to replacing $\hat\rho(\alpha_1)$ with its composition with a rotation by $\frac{2 \pi r_1}{k}$, and $\hat\rho(\beta_1)$ with its composition by some rotation of the form $\frac{2 \pi r_2}{k}$.  This changes the rotation number of $\hat\rho(c)$ by $3r_1 + 5r_2$ mod $k$.  For any choice of $r_1 \cong 5x$ mod $k$, we may choose $r_2 \cong -3x$ mod $k$ so that the rotation number does not change.  

We will further restrict our choice of lifts of $\rho$ so that the horizontal lifts of $c$ are all isotopic curves in the $k$-fold cover of $\UT\Sigma$.   Following the discussion above, when $k$ is large, we may choose $r_1$ and $r_2$ to vary by only a small family of rotations, 
 so that the holonomies of the lifted representations, which differ by rigid rotation of $\frac{2 \pi r_i}{k}$, remain $C^\infty$ close to each other in $\Hom(\pi_1\Sigma, \Diff(S^1))$.   This will give us some number $C(k)$ of lifts of $c$ which are sufficiently close to each other to be isotopic, where $C(k)$ grows linearly in $k$.  While  the genus of $\Sigma$ must increase as $k$ increases (since $k$ needs to divide $\chi(\Sigma)$ for a lift to exist), this does not pose any problem as we are performing these modifications only over the fixed torus subsurface $T_1$. 
Restricting to such lifts of $\rho$,  fix $p = p(k) \in \N$ large enough so that Proposition  \ref{prop:Dehn_surgery_Fenely} ensures that Dehn surgery of slope $-p$ 
on any connected component of any horizontal lift of $c$ for any one of this restricted class of lifts gives a skew-Anosov flow.  Further restricting $p$ if needed, we may also ensure that all homeomorphisms of the Dehn-surgered manifold preserve  the free homotopy class of the core curve by Lemma  \ref{lem:new_slope2}.

The restriction we imposed on our lifts of $\rho$ ensuring that connected components of lifts of $c$ are always isotopic means that performing a slope $-p$ Dehn surgery on any horizontal lift of $c$ to any of the covers will produce diffeomorphic hyperbolic manifolds.    Thus, the remainder of the proof is devoted to showing that the flows produced in this way are inequivalent whenever the lifts of $\rho$ differ among our $C(k)$ choices.   This means that the Dehn-surgered manifold described above admits $C(k)$ inequivalent skew-Anosov flows.   For this, we need to describe the construction a bit more carefully, setting some more precise notation along the way.   
 
\subsection*{Proof of inequivalence of flows} 
Fixing notation, let $M$ denote the $k$-fold fiberwise cover of $\UT\Sigma$ and let $\hat{\rho}$ and $\hat{\rho}'$ be two lifts of $\rho$ chosen so as to satisfy the restrictions imposed above.   The manifold $M$ is, topologically, both the suspension $E_{\hat{\rho}}$ of $\hat{\rho}$, and the suspension of $\hat{\rho}'$.  
Since $\hat{\rho}'$ is close to $\hat{\rho}$ (because of our restrictions), we may realize $\hat{\rho}'$ as the holonomy of a foliation on $M$ that is $C^1$ (and in this case actually $C^\infty$) close to the horizontal foliation defined by $E_{\hat{\rho}}$.  Going forward, we let $E_{\hat{\rho}'}$ denote $M$ equipped with this nearby foliation.  

Fix a connected component $K$ of the horizontal lift of $c$ to $E_{\hat{\rho}}$, and a connected component $K'$ of the horizontal lift of $c$ to $E_{\hat{\rho}'}$, isotopic to $K$ in $M$.   It will be convenient to fix an identification of $K$ and $K'$, so let $g: M \to M$ be an isotopically trivial homeomorphism such that $g(K) = K'$.  Then $g\Phi_t'g^{-1}$ and $\Phi_t$ each have $K$ as a periodic orbit.   Now perform integral Dehn--Fried--Goodman surgery of slope $-p$ on the knot $K$ to modify the flow $\Phi_t$ to a new skew-Anosov flow $\Psi_t$ on the Dehn-surgered manifold $M_{-p}$, and separately perform integral Dehn--Fried--Goodman surgery of slope $-p$ on $K$ to modify $g \Phi' g^{-1}$ to obtain a flow $\Psi'_t$ on $M_{-p}$.  Note that the latter construction is simply the result of performing surgery on the knot $K'$ in $M$, under our identification of $K$ and $K'$ via $g$.

What we wish to show is that $\Psi_t$ and $\Psi_t'$ are inequivalent.  Suppose for contradiction that this is not the case, so there is some homeomorphism $f:M_{-p} \to M_{-p}$ taking flowlines of $\Psi_t$ to flowlines of $\Psi_t'$.   Let $\gamma$ denote the core of the filling torus on $M_{-p}$.    By Lemma  \ref{lem:new_slope2}, $f(\gamma)$ and $\gamma$ lie in the same free homotopy class, so by Proposition \ref{prop:freely_homotopic}, there is a homeomorphism $h$ of $M_{-p}$, inducing some power of $\eta$ on the flow space of $\Psi'_t$, such that $hf(\gamma) = \gamma$.   So we now may as well consider $hf$ as the homeomorphism conjugating the foliations defined by the two flows.  

Restricting $hf$ to $M _{-p} - \gamma$ defines a homeomorphism $\phi$ of $M -K$.  As in Lemma \ref{lem:new_slope2}, this determines a homeomorphism $\phi_M$ of $M$ agreeing with $\phi$ on the complement of a neighborhood of the end, a neighborhood which can be chosen as small as we wish.  Choose such a neighborhood small enough so as {\em not} to contain any horizontal lift of any power of the curves $\alpha_1$ or $\beta_1$ to either $E_{\hat{\rho}}$ or to (the conjugate by $g$ of) $E_{\hat{\rho}'}$.   By Lemma \ref{lem:new_slope2} (2), the map $\phi_M$ is homotopic to the identity, and, by construction, outside of a small neighborhood of $K$, $\phi_M$ maps flowlines of $\Phi_t$ to those of $g\Phi_t'g^{-1}$.  
This is where we will derive a contradiction.  The curves $\alpha_1$ and $\beta_1$ each have a power, say $n$ and $m$ which admits a horizontal lift to a closed orbit of the flow $\Phi_t$.  Let $\hat{\alpha}_1$ and $\hat{\beta}_1$ denote these closed orbits.   Since $\phi_M$ is homotopic to the identity, it maps these to closed orbits of $g\Phi_t'g^{-1}$ that are freely homotopic to the orbits $\hat{\alpha}_1$ and $\hat{\beta}_1$, respectively.  In particular, the projection of 
$\phi_M(\hat{\alpha}_1)$ and $\phi_M(\hat{\beta}_1)$ to curves on $\Sigma$ are freely homotopic to $\alpha_1^n$ and $\beta_1^m$.   Thus, 
$\phi_M(\hat{\alpha}_1)$ and $\phi_M(\hat{\beta}_1)$ are also the (conjugates under $g$ of) horizontal lifts of $\alpha_1^n$ and $\beta_1^m$ to closed orbits of $\Phi_t'$.   By design, we chose $\hat{\rho}'$ to give these curves different rotation numbers than their rotation numbers under $\hat{\rho}$.   By the discussion in Remark \ref{rem:rotation_number}, this means that their horizontal lifts are not freely homotopic, since they wind a different number of times around the fibers over the representative curves on $\Sigma$. This gives the desired contradiction.
\end{proof}


\section{Further questions}

We conclude by suggesting a few directions for further study.  
\subsection{On boundary actions and rigidity} 

\begin{problem} \label{prob:stable_group} 
Suppose that $\Gamma$ is a hyperbolic group with Gromov boundary a topological sphere.  Is the action of $\Gamma$ on its boundary topologically stable?  
\end{problem} 

As a starting point to this problem, one could look for a new proof, or a direct modification of our proof, of Theorem \ref{thm:local_rig} that leans more heavily on coarse geometry (quasi-geoesdics) and less heavily on the Riemannian structure of $M$ (stable foliations for geodesic flow).  While Problem \ref{prob:stable_group} is intended to be strictly more general, the issue of which hyperbolic groups with sphere boundary are {\em not} already covered by Theorem \ref{thm:local_rig} is actually somewhat subtle.   Bartels--L\"uck--Weinberger \cite{BLW} proved that a torsion-free hyperbolic group with sphere boundary is the fundamental group of a closed, {\em aspherical} manifold, provided that the boundary has dimension at least 5.   However, whether this manifold can be taken to have a Riemannian metric of negative (or even nonpositive) curvature is a separate question.  One could also consider the case where the metric is not assumed Riemannian, but only locally CAT(-1), which is again potentially a separate case; in fact Davis--Januszkiewicz--Lafont \cite{DJL} ask whether there is any example of a smooth, locally CAT(-1) manifold $M$ such that $\partial_\infty \tilde{M} \cong S^{n-1}$ but does not support any Riemannian metric of nonpositive sectional curvature.  To our knowledge this question has not yet been answered.  
Given the subtleties of such metric issues, the spirit of Problem \ref{prob:stable_group} is really to ask for a coarse geometric proof of Theorem \ref{thm:local_rig}, to the extent that this is possible.

There are two other natural directions in which one could attempt to generalize topological stability, the first being a version for closed manifolds with boundary.  

\begin{problem} \label{prob:relative}
Formulate a relative version of Theorem \ref{thm:local_rig} for compact negatively curved manifolds with geodesic boundary, or for finite-volume manifolds of strict negative curvature.  
\end{problem} 

Much more ambitiously, one could attempt a rephrasing of Problem \ref{prob:stable_group} for appropriate classes of relatively hyperbolic groups.  
The basic motivating example for the problem is Thurston's result that the deformation space of hyperbolic structures on a hyperbolic 3-manifold with torus boundary (equivalently, the space of representations of its fundamental group into $\PSL_2(\C) \subset \Homeo(S^2)$, up to $\PSL_2(\C)$-conjugacy) 
has complex dimension equal to the number of boundary components; fixing the structure on the boundary fixes the conjugacy class of the representation.  Problem \ref{prob:relative} asks for a $C^0$ analog of this in a more general setting.   

Finally, it is natural to ask whether any existing techniques can be used to improve the regularity of the (semi)-conjugacy between representations, given higher regularity of the representations.  If, in the context of Theorem \ref{thm:local_rig}, one knows that both $\rho$ and a $C^0$-close representation $\rho'$ are of class $C^k$, for some $k>0$, does it follow that they are in fact conjugate, and if so, conjugate by a $C^k$ diffeomorphism?  
Many existing local rigidity results for group actions use the presence of hyperbolic elements to improve the regularity of a conjugacy (see for instance the foundational work of Katok and Lewis \cite{KL}, as well as Fisher and Margulis \cite{FM}, and Ghys' differentiable rigidity for surface group actions on the circle \cite{Ghys93}).  It is quite possible that some such strategy would directly apply to our case, but we have not pursued this issue.

\subsection{On skew-Anosov flows and slitherings} 

Perhaps the most obvious question arising from this work is the following (folkloric) variant of Christy's question.  
\begin{question}
Does there exist a closed hyperbolic 3-manifold that supports infinitely many inequivalent (skew) Anosov flows? 
\end{question} 
It is our impression that the answer is generally believed to be negative.  The question remains open, and we do not consider our construction of inequivalent flows via surgery to provide any evidence in either direction.  
\medskip

Following the work of Foulon and Hasselblatt \cite{Foulon_Hasselblatt}, the flows that we construct in the proof of Theorem \ref{thm:InequivalentAnosov} are {\em contact Anosov}, meaning that $E^{uu} \oplus E^{ss}$ defines a contact structure with this flow the associated Reeb flow.   
Contact Anosov flows obtained by integral Dehn surgery are studied further in recent work of Foulon, Hasselblatt, and Vaugon, where they ask 
specifically whether this construction can lead to different contact structures on the same manifold.   (See discussion and remarks in \cite{FHV}, Section 2.2.)   In this context one can ask the following. 
\begin{question}
Does there exist a hyperbolic $3$-manifold carrying N distinct contact structures whose Reeb flows are Anosov?
\end{question}
\noindent The examples we produce seem to be natural candidates for this, and hence also for an answer to Foulon--Hasselblatt--Vaugon.

\bibliographystyle{plain}

\bibliography{biblio}

\end{document}